\newcommand{\textcite}{\citet}
\newtheorem{lem}{Lemma}
\newtheorem{thm}{Theorem}
\newtheorem{prop}{Proposition}
\newtheorem{coro}{Corollary}
\newcommand{\pen}{\mathop{\text{pen}}}
\newcommand{\degW}{d_W}
\newcommand{\degUpsilon}{d_\Upsilon}
\newcommand{\maxaW}{T_W}
\newcommand{\maxaUpsilon}{T_\Upsilon}
\newcommand{\KL}{\mathrm{KL}}
\newcommand{\JKL}{\mathrm{JKL}}
\author{L. Montuelle, E. Le Pennec and S. X. Cohen}
\title{Gaussian Mixture Regression model with logistic weights, a penalized
  maximum likelihood approach}
\thanks[ipanema]{IPANEMA - CNRS /
  Synchrotron Soleil}}
\begin{document}
\makeRR

\maketitle
\section{Framework}

In classical Gaussian mixture models, density is modeled by
\begin{align*}
s_{K,\upsilon,\Sigma,w}(y)= \sum_{k=1}^{K} \pi_{w,k} \Phi_{\upsilon_k,\Sigma_k}(y),
\end{align*}
where $K \in \mathbb{N}^{*}$ is the number of mixture components,
$\Phi_{\upsilon,\Sigma}$ is the density of a Gaussian of mean $\upsilon$ and
covariance matrix $\Sigma$,
\begin{align*}
\Phi_{\upsilon,\Sigma}(y)=\frac{1}{\sqrt{(2\pi)^{p}|\Sigma|}} e^{\frac{-1}{2}(y-\upsilon)'\Sigma^{-1}(y-\upsilon)}  
\end{align*}
and mixture weights can always be defined from a $K$-tuple $(w_1,\ldots,w_K)$ with a logistic scheme:
\begin{align*}
  \pi_{w,k} = \frac{e^{w_k}}{\sum_{k'=1}^{K} e^{w_{k'}}}.
\end{align*}
In this article, we consider such a model in which mixture weights
as well as means can depend on a covariate.

More precisely, we observe $n$ pairs of random variables $((X_i,Y_i))_{1
  \leq i \leq n}$ where covariates $X_i$s are independent and
$Y_i$s are independent conditionally to the $X_i$s. We want to
estimate the conditional density $s_0(\cdot|x)$ with respect to the Lebesgue measure
of $Y$ given $X$. We model this conditional density by a mixture of Gaussian
regression with varying logistic weights
\begin{align*}
s_{K,\upsilon,\Sigma,w}(y | x)= \sum_{k=1}^{K} \pi_{w(x),k} \Phi_{\upsilon_k(x),\Sigma_k}(y),
\end{align*}
where  $(\upsilon_1,\ldots,\upsilon_K)$ and $(w_1,\ldots,w_K)$ are now
$K$-tuples of functions chosen, respectively, in a set $\Upsilon_K$ and $W_K$.
Our aim is then to estimate those functions $\upsilon_k$ and $w_k$, the
covariance matrices $\Sigma_k$ as well as the number of classes $K$ so
that the \emph{error} between the estimated conditional density and the true
conditional density is \emph{as small as possible}. 

The classical Gaussian mixture case has been much studied~\citep{FiniteMixture}. Nevertheless, theoretical properties of such model have
been less considered. In a Bayesian framework, asymptotic properties of
posterior distribution are obtained by \textcite{Choi}, \textcite{Genovese}, \textcite{VanderVaart} when the true density is assumed to be a Gaussian mixture.
AIC/BIC penalization scheme are often used to select a number of
cluster (see \textcite{burnham02:_model} for instance).
Non asymptotic bounds are obtained by \textcite{Maugis} even when the true density is not a Gaussian mixture.
All these works rely heavily on a \emph{bracketing} entropy analysis of
the models, that will also be central in our analysis.

When there is a covariate, the most classical extension of this model
is the Gaussian mixture regression, in which the means $\upsilon_k$
are now functions, is well studied as  described
in\citet{FiniteMixture}. Models in which the proportions vary have
been considered by \textcite{Antoniadis}.
Using idea of \textcite{Kolaczyk},  they have considered a model in which only proportion depend in a piecewise constant manner from the covariate. Their theoretical results are nevertheless obtained under
the strong assumption they exactly know the Gaussian components. This
assumption can be removed as shown by \citet{cohen12:_partit_based_condit_densit_estim}.
Models in which both mixture weights and means depend on the
covariate are considered by \textcite{Ge}, but in a logistic regression
mixture framework. They give conditions on the number of experts to obtain consistency of the posterior with logistic weights. Note that similar properties are studied by \textcite{Lee} for neural networks.

Although natural, Gaussian mixture regression with varying logistic weights seems to be mentioned first by
\textcite{HMEandEM}. They provide an algorithm similar to ours, based
on EM and IRLS, for hierarchical mixtures of experts but no
theoretical analysis.
 \textcite{Chamroukhi3} consider the case of piecewise polynomial
regression model with affine logistic weights. 
In our setting, this corresponds to a specific choice for $\Upsilon_K$
and $W_K$: a collection of piecewise polynomial and a set of affine
functions.
They use
a variation of the EM algorithm and a BIC criterion and provide
numerical experiments to support the efficiency of their scheme.
In this paper, we propose a slightly different  penalty choice and
prove non asymptotic bounds for the risk under very mild assumptions on $\Upsilon_K$
and $W_K$ that hold in their case.

\section{A model selection approach}

We will use a
model selection approach and define some conditional density models 
$S_m$ by specifying sets of Gaussian regression mixture conditional densities 
through
their number of classes $K$, a structure on the covariance matrices
$\Sigma_k$ and two function sets $\Upsilon_K$ and $W_K$ to which belong
respectively the $K$-tuple of means $(\upsilon_1,\ldots,\upsilon_K)$
and the $K$-tuple of logistic weights $(w_1,\ldots,w_K)$.
Typically those sets are compact subsets of polynomial of low
degree.
Within such a conditional density set $S_m$, we estimate $s$ by the maximizer $\widehat{s}_m$ of the likelihood
\begin{align*}
  \widehat{s}_{m} = \mathop{\text{argmax}}_{s_{K,\upsilon,\Sigma,w} \in S_m} \sum_{i=1}^{n} \ln s_{K,\upsilon,\Sigma,w}(Y_i|X_i),
\end{align*}
or more precisely, to avoid any existence issue, by any
$\eta$-minimizer of the -log-likelihood:
\begin{align*}
  \sum_{i=1}^{n} -\ln \widehat{s}_{m}(Y_i|X_i) \leq
  \min_{s_{K,\upsilon,\Sigma,w} \in S_m} \sum_{i=1}^{n} -\ln
  s_{K,\upsilon,\Sigma,w}(Y_i|X_i) + \eta.
\end{align*}
Assume now we have a collection $\{S_{m}\}_{m\in\mathcal{M}}$ of
models, for instance with different number of classes $K$ or different
maximum degree for the polynomials defining $\Upsilon_K$ and $W_K$, we
should choose the best model within this collection.
 Using only
the log-likelihood is not sufficient since this
favors models with large complexity. To balance this issue, we will
define a penalty $\pen(m)$ and select the model $\widehat{m}$ that
minimizes (or rather $\eta'$-almost minimizes) the sum of the
opposite of the log-likelihood and this penalty:
\begin{align*}
  \sum_{k=1}^{K} -\ln \widehat{s}_{\widehat{m}}(Y_i|X_i) 
+ \pen(\widehat{m}) 
\leq \min_{m\in\mathcal{M}} \sum_{k=1}^{K} -\ln \widehat{s}_{m}(Y_i|X_i) 
+ \pen(m) + \eta'.
\end{align*}

Our goal is now to define a penalty $\pen(m)$ which ensures that the
maximum likelihood estimate in the selected model performs almost as
well as the maximum likelihood estimate in the best model. More precisely, we
will prove that
\begin{align*}
\mathbb{E}\left[\JKL_{\rho}^{\otimes n}(s_0,\widehat{s}_{\widehat{m}})\right]
\leq C_1  \inf_{m \in \mathcal{M}} \left(\inf_{s_m \in S_m} \KL^{\otimes n}(s_0,s_m) + \frac{\pen(m)}{n}+ \frac{\eta + \eta'}{n} \right)
 +\frac{C_2}{n}
\end{align*}
where $\KL^{\otimes n}$ is a \emph{tensorized} Kullback-Leibler divergence,
$\JKL_{\rho}^{\otimes n}$ a lower bound of this divergence with a
$\pen(m)$ chosen of the same order as the variance of the
corresponding single model maximum likelihood estimate. 
In the next section, we specify all those divergences and explain the
general framework proposed by \textcite{ConditionalDensity} for conditional
density estimation. We will then explain how to use those results in
our specific setting. The last section is dedicated to some numerical
experiments conducted for sake of simplicity in the case where
$X\in[0,1]$ and $Y\in\mathbb{R}$.

\section{A general conditional density model selection theorem}

We summarize in this section the main result of \textcite{ConditionalDensity}
that will be our main tool to obtain the previous oracle
inequality. In this work, the estimator loss is measured with a
divergence $\JKL^{\otimes n}$ defined as a tensorized Kullback-Leibler divergence between the
true density and a convex combination of the true density and the
estimated one. Contrary to the true Kullback-Leibler divergence, to
which it is closely related, it is bounded. This boundedness turns out
to be crucial to control the loss of the penalized maximum likelihood
estimate under mild assumptions on the complexity of the model and
their collection.

Let $\KL$ be the classical Kullback-Leibler divergence, which measures a \emph{distance} between two density functions. Since we work
in a conditional density framework, we use a \emph{tensorized} version
of it. We define by 
$\KL^{\otimes n}$ the Kullback-Leibler tensorized divergence,
\begin{align*}
 \KL^{\otimes n}(s,t)=\mathbb{E} \left[ \frac{1}{n} \sum_{i=1}^n \KL(s(.|X_i),t(.|X_i)) \right]
\end{align*}
which appears naturally in this setting. Replacing $t$ by a convex
combination between $s$ and $t$ yields the so-called Jensen-Kullback-Leibler tensorized
divergence, denoted $\JKL_{\rho}^{\otimes n}$,
\begin{align*}
 \JKL_{\rho}^{\otimes n}(s,t)=\mathbb{E} \left[ \frac{1}{n}
   \sum_{i=1}^n \frac{1}{\rho} \KL(s(.|X_i),(1-\rho)s(.|X_i)+\rho
   t(.|X_i)) \right]
\end{align*} with $\rho \in ]0;1[$. This loss is always bounded by
$\frac{1}{\rho} \ln \frac{1}{1-\rho}$ but behaves as $\KL$ when $t$ is
close to $s$. Furthermore $\JKL_{\rho}^{\otimes n}(s,t)\leq
\KL_{\rho}^{\otimes n}(s,t)$. If we let $d^{2\otimes n}$ be the
tensorized extension of the squared Hellinger distance $d^2$, \textcite{ConditionalDensity} prove that there is a constant $C_{\rho}$ such that
$C_{\rho} d^{2\otimes n}(s,t)\leq \JKL_{\rho}^{\otimes n}(s,t)$.

To any model $S_{m}$, a set of conditional densities, we associate a
complexity defined in term of a
specific entropy, the bracketing entropy with respect to the root of $d^{2\otimes n}$. Recall that
a bracket $[t^-,t^+]$ is a pair of real functions such that $\forall
(x,y) \in \mathcal{X} \times \mathcal{Y}, t^-(x,y)\leq t^+(x,y)$ and a function $s$ is said to belong to the bracket $[t^-,t^+]$ if 
$\forall (x,y) \in \mathcal{X} \times \mathcal{Y}, t^-(x,y)\leq
s(x,y)\leq t^+(x,y)$. The bracketing entropy $H_{[],d}(\delta,S)$ of a
set $S$ is defined as the logarithm of the minimal number $N_{[],d}(\delta,S)$ of
brackets $[t^-,t^+]$ covering $S$, such that $d(t^-,t^+)\leq\delta$.
Our main assumption on models is an upper bound of a Dudley type
integral of these bracketing entropies:
\begin{description}
\item[Assumption (H)] For every model $S_m$ in the collection $\mathcal{S}$, there is a non-decreasing function $\phi_m$ such that $\delta \mapsto \frac{1}{\delta} \phi_m(\delta)$ is non-increasing on ]0,+$\infty$[ and for every $\sigma \in \mathbb{R}^+$,
\begin{align*}
\int_0^{\sigma} \sqrt{H_{[.],d^{\otimes n}} (\delta,S_m)} d\delta \leq \phi_m(\sigma).
\end{align*}
\end{description}

One need further to control the complexity of the collection as a
whole through a coding type (Kraft) assumption.
\begin{description}
\item[Assumption (K)] There is a family $(x_m)_{m \in \mathcal{M}}$ of non-negative numbers such that
\[
\sum_{m \in \mathcal{M}} e^{-x_m} \leq \Xi <+\infty.
\]
\end{description}
For technical reason, a separability assumption, always satisfied in
the setting of this paper, is also required.
\begin{description}
\item[Assumption (Sep)] For every model $S_m$ in the collection $\mathcal{S}$, there exists some countable subset $S'_m$ of $S_m$ and a set $\mathcal{Y'}_m$ with $\lambda(\mathcal{Y}\backslash \mathcal{Y}'_m)=0$ such that for every $t$ in $S_m$, it exists some sequence $(t_k)_{k\geq 1}$ of elements of $S'_m$ such that for every $x$ and every $y \in \mathcal{Y}'_m, \ln(t_k(y|x)) \xrightarrow[k \rightarrow + \infty]{} \ln(t(y|x))$.
\end{description}

The main result of \textcite{ConditionalDensity} is a condition on the penalty
$\pen(m)$ which ensures an oracle type inequality:
\begin{thm}
\label{thm:general}Assume we observe $(X_i,Y_i)$ with unknown conditional density $s_0$.
Let $\mathcal{S}=(S_m)_{m \in \mathcal{M}}$ an at most countable
conditional density model collection. 
Assume assumptions (H), (Sep) and (K) hold.
Let $\widehat{s}_m$ be a $\eta$ -log-likelihood minimizer in
  $S_m$
\[
\sum_{i=1}^n - \ln(\widehat{s}_m(Y_i|X_i) ) \leq
\inf_{s_m \in
  S_m} \left(  \sum_{i=1}^n - \ln(s_m(Y_i|X_i) ) \right)
+ \eta
\]

Then for any $\rho\in(0,1)$ and any $C_1>1$, there is a constant
$\kappa_0$ depending only on $\rho$ and
$C_1$ such that,
as soon as for every index $m \in \mathcal{M}$,
\[
pen(m) \geq \kappa (n\sigma^2_m+x_m)
\]
 with $\kappa >\kappa_0$
and $\sigma_m$ the unique root of
$\frac{1}{\sigma} \phi_m(\sigma)=\sqrt{n}\sigma$,
the penalized likelihood estimate $\widehat{s}_{\widehat{m}}$
with $\widehat{m}$ such that 
\[
\sum_{i=1}^n - \ln(\widehat{s}_{\widehat{m}}(Y_i|X_i) )+pen(\widehat{m}) \leq 
\inf_{m \in
  \mathcal{M}} \left(  \sum_{i=1}^n - \ln(\widehat{s}_m(Y_i|X_i) )+pen(m) \right)
+ \eta'
\]
satisfies
\begin{align*}
&\mathbb{E}\left[\JKL_{\rho}^{\otimes
    n}(s_0,\widehat{s}_{\widehat{m}})\right]\\
&\qquad\quad
\leq C_1  \inf_{m \in \mathcal{M}} \left(\inf_{s_m \in S_m}
  \KL_{\lambda}^{\otimes n}(s_0,s_m) + \frac{pen(m)}{n}\right) + C_1 \frac{\kappa_0\Xi+\eta + \eta'}{n}.
\end{align*}
\end{thm}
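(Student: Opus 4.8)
Since this statement is precisely the main theorem of \textcite{ConditionalDensity}, the plan is to reproduce the Birg\'e--Massart style argument for penalized maximum likelihood, adapted to the conditional density setting. First I would write down the basic inequality coming from the two defining properties of the estimator: combining the $\eta'$-almost minimality of $\widehat m$ with the $\eta$-almost minimality of each $\widehat s_m$, for an arbitrary competitor $s_m \in S_m$ one gets
\[
\sum_{i=1}^n -\ln \widehat{s}_{\widehat m}(Y_i|X_i) + \pen(\widehat m)
\le \sum_{i=1}^n -\ln s_m(Y_i|X_i) + \pen(m) + \eta + \eta'.
\]
Dividing by $n$ and recentering, I would recast everything in terms of an empirical log-likelihood-ratio process. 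The decisive modelling choice is to measure the loss with $\JKL_\rho^{\otimes n}$: for a candidate density $t$ I would use the function $f_t = -\tfrac1\rho \ln\!\big(1-\rho+\rho\, t/s_0\big)$, whose expectation under $s_0$ is exactly the $\JKL_\rho$ loss and which is bounded above. This boundedness, inherited from the convex combination defining $\JKL_\rho$, is what makes the concentration arguments below work where they would fail for the raw Kullback--Leibler divergence.

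The heart of the proof is to control, for each model, the centered empirical process
\[
\nu_n(t) = \frac1n \sum_{i=1}^n \Big( f_t(X_i,Y_i) - \mathbb{E}[f_t(X_i,Y_i)] \Big).
\]
Using the comparison $C_\rho\, d^{2\otimes n}(s_0,t) \le \JKL_\rho^{\otimes n}(s_0,t)$ recalled above, I would bound the variance of $f_t$ by the squared Hellinger loss, so that Assumption (H) --- the Dudley bound $\int_0^\sigma \sqrt{H_{[\cdot],d^{\otimes n}}(\delta,S_m)}\,d\delta \le \phi_m(\sigma)$ --- can be fed into a bracketing chaining argument. Chaining controls the expected supremum of $\nu_n$ over a Hellinger ball of radius $\sigma$ by a multiple of $\phi_m(\sigma)/\sqrt n$, and the fixed point $\sigma_m$ solving $\phi_m(\sigma)/\sigma = \sqrt n\,\sigma$ is exactly the radius at which this fluctuation balances the loss, which is why the penalty scales like $n\sigma_m^2$. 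A Talagrand-type concentration inequality for the supremum then turns the bound in expectation into one with exponential tails, producing a deviation term that the penalty can offset.

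To pass from a single model to the whole collection I would apply this deviation bound on each $S_m$ with a level calibrated by $x_m$ and sum the resulting tails; Assumption (K), $\sum_m e^{-x_m} \le \Xi$, guarantees convergence and yields the term $\kappa_0\Xi/n$. Assumption (Sep) enters only to make the relevant suprema measurable and to reduce them to a countable dense subset, avoiding any measurability pathology. Assembling the pieces and taking $\kappa > \kappa_0$ large enough to absorb the fluctuations --- which requires moving a fraction $1-1/C_1$ of the loss to the left-hand side, forcing $C_1 > 1$ --- gives, after taking expectations and infimizing over $m$ and over $s_m \in S_m$, the stated oracle inequality with $\kappa_0$ depending only on $\rho$ and $C_1$.

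The main obstacle is precisely this uniform control of $\nu_n$: getting from the bracketing entropy, through chaining, to a bound whose fixed point is exactly $\sigma_m$, then concentrating the supremum, all while keeping the variance tied to the Hellinger loss so that the deviation can be reabsorbed into the $\JKL_\rho$ loss on the left. The convex-combination device behind $\JKL_\rho$ is what renders this feasible, and the joint calibration of the constant $\kappa$ and the slicing fraction is the delicate bookkeeping that ultimately fixes $\kappa_0$.
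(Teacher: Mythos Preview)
The paper does not prove this theorem at all: it is quoted verbatim as ``the main result of \textcite{ConditionalDensity}'' and used as a black box to derive Theorem~\ref{theo:selectdim}. You correctly recognize this in your opening sentence. Consequently there is no proof in the paper to compare your proposal against; the paper's only work related to this statement consists in verifying its hypotheses (H), (K), (Sep) for the specific Gaussian mixture regression models, which is the content of the appendix.

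Your sketch is a faithful high-level outline of the Birg\'e--Massart argument as adapted to conditional densities in the cited reference: the basic inequality from near-minimality, the passage to the bounded surrogate $f_t = -\tfrac{1}{\rho}\ln(1-\rho+\rho\,t/s_0)$ whose expectation is $\JKL_\rho$, control of the centered process by bracketing chaining under (H) leading to the fixed point $\sigma_m$, Talagrand-type concentration, the union bound over models absorbed by (K), and (Sep) for measurability. This is indeed the structure of the proof in \textcite{ConditionalDensity}, so as a summary of the external argument your proposal is correct. Just be aware that, for the purposes of \emph{this} paper, no proof is expected or given; the theorem is an imported tool.
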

The name oracle type inequality means that the right-hand side is a
proxy for the estimation risk of the best model within the collection.
The term $\inf_{s_m \in S_m} \KL_{\lambda}^{\otimes n}(s_0,s_m)$ is a
typical bias term while $\frac{pen(m)}{n}$ plays the role of the
variance term. We have three sources of loss here: the constant
$C_1$ can not be taken equal to $1$, we use a different divergence on
the left and on the right and $\frac{pen(m)}{n}$ is not directly
related to the variance. The first issue is often considered as minor while
the second one turns out to be classical in density estimation
results. Whenever $pen(m)$ can be chosen approximately proportional to
the dimension $D_m$ of the model, which will be the case in our
setting,  $\frac{pen(m)}{n}$ is approximately proportional to
$D_m/n$, which is the asymptotic variance in the parametric case.
The right-hand side matches nevertheless the best known bound obtained
for a single model within such a general framework.

In the next section, we show how to apply this result in our Gaussian
mixture setting and prove that the penalty can be chosen roughly proportional
to the intrinsic dimension of the model, and thus of the order of the variance. 

\section{Spatial Gaussian regression mixture estimation theorem}

As explained in introduction, we are looking for conditional densities of type
\begin{align*}
s_{K,\upsilon,\Sigma,w}(y | x)= \sum_{k=1}^{K} \pi_{w,k}(x) \Phi_{\upsilon_k(x),\Sigma_k}(y),
\end{align*}
where $K \in \mathbb{N}^{*}$ is the number of mixture components,
$\Phi_{\upsilon,\Sigma}$ is the density of a Gaussian of mean $\upsilon$ and
covariance matrix $\Sigma$, 
 $\upsilon_{k}$ is a function specifying the
mean given $x$ of the $k$-th component while $\Sigma_k$ is its
covariance matrix and  the mixture weights $\pi_{w,k}$ are defined from a
collection of $K$ functions $w_1,\ldots,w_K$ by a logistic scheme:
\begin{align*}
  \pi_{w,k}(x) = \frac{e^{w_k(x)}}{\sum_{k'=1}^{K} e^{w_{k'}(x)}}.
\end{align*}
For sake of simplicity, we will assume that the covariate $X$ belongs
to an hypercube so that $\mathcal{X}=[0;1]^d$.

We will estimate those conditional densities by conditional densities
belonging to some model $S_m$
defined by
\begin{align*}
S_m =&\bigg\{(x,y)\mapsto\sum_{k=1}^{K}
  \pi_{w,k}(x)\Phi_{\upsilon_k(x),\Sigma_k}(y) \big|
(w_1,\ldots, w_K) \in W_K, (\upsilon_1,\ldots,\upsilon_K) \in \Upsilon_K, \\
 &(\Sigma_1,\ldots,\Sigma_K) \in V_K  \bigg \}
\end{align*}
where $W_K$ is a compact set of $K$-tuples of functions from
$\mathcal{X}$ to $\mathbb{R}$, $\Upsilon_K$ a compact set of $K$-tuples
of functions from $\mathcal{X}$ to $\mathbb{R}^p$ and $V_K$  a compact
set of $K$-tuples of covariance matrix of size $p\times p$.
Before describing more precisely those sets, we recall that $S_m$
will be taken in a model collection $\mathcal{S}=(S_m)_m$,
where $m$ specifies a choice for each of those parameters. The number
of components $K$ can be chosen arbitrarily in $\mathbb{N}^*$, but
will in practice and in our theoretical example be chosen smaller than an arbitrary $K_{\max}$, which may depend on the sample size $n$.
The sets $W_K$ and $\Upsilon_K$ will be typically chosen as a tensor product
of a same compact set of moderate dimension, for instance a set of
polynomial of degree smaller than respectively $\degW$ and $\degUpsilon$
whose coefficients are smaller in absolute values than respectively
$\maxaW$ and $\maxaUpsilon$. The structure of the set $V_K$ depends on the
\emph{noise} model chosen: we can assume, for instance, it is common to all
regressions, that they share a similar volume or diagonalization
matrix or they are all different. More precisely, we decompose any
covariance matrix $\Sigma$ into $LDAD'$, where $L=|\Sigma|^{1/p}$ is a
positive scalar corresponding to the volume, $D$ is the matrix of
eigenvectors of $\Sigma$ and $A$ the diagonal matrix of normalized
eigenvalues of $\Sigma$.
Let $L_-, L_+$ be positive values and $\lambda_-, \lambda_+$ real
values. We define the set $\mathcal{A}(\lambda_-,\lambda_+)$ of
diagonal matrices $A$ such that $|A|=1$ and $\forall  i\in
\{1,\ldots,p\}, \lambda_- \leq A_{i,i} \leq \lambda_+$.
A set $V_K$ is defined by
\begin{align*}
V_K=&\left\{(L_1D_1A_1D_1',\ldots,L_KD_KA_KD_K') |
\forall k, L_- \leq L_k \leq L_+,
D_k \in SO(p),\right.\\
& \left. A_k \in \mathcal{A}(\lambda_-,\lambda_+) \right\}
\end{align*}
Those sets $V_K$ correspond to the classical covariance matrix sets
described by \textcite{Celeux}.

We will bound the complexity term $n\sigma_m^2$ in term of the
\emph{dimension} of $S_m$: we prove that those two terms are roughly
proportional. The set $V_K$ is a parametric set and thus $\dim(V_K)$ is easily defined as the dimension of its parameter set.
Defining the dimension of $W_K$ and $\Upsilon_K$ is more interesting.
We rely on an entropy type definition of the dimension. For any
$K$-tuples of functions $(s_1,\ldots,s_K)$ and $(t_1,\ldots,t_K)$, we let
\begin{align*}
 d_{\|\sup\|_{\infty}}\left((s_1,\ldots,s_K),(t_1,\ldots,t_K)\right)
= \sup_{x\in\mathcal{X}} \sup_{1\leq k\leq K} |s_k(x)-t_k(x)|
\end{align*}
and define the dimension $\dim(F_K)$ of a set $F_K$ of such $K$-tuples as the
smallest $D$ such that there is a $C$ satisfying
\begin{align*}
  H_{d_{\|\sup\|_{\infty}}}(\sigma,F_K) \leq D \left( C + \ln
    \frac{1}{\sigma} \right).
\end{align*}

Using the following proposition of \textcite{ConditionalDensity}, we can easily verify that Assumption (H) is satisfied.

\begin{prop} 
\label{propsigma_m}
If for any $\delta \in [0;\sqrt{2}], H_{[.],d^{\otimes n}} (\delta,S_m) \leq D_m (C_m+\ln(\frac{1}{\delta}))$,
then the function $\phi_m(\sigma)=\sigma \sqrt{D_m} \left(\sqrt{C_m}+\sqrt{\pi}+\sqrt{\ln(\frac{1}{\sigma \wedge 1})}\right)$ satisfies assumption (H).
Furthermore, the unique root $\sigma_m$ of $\frac{1}{\sigma} \phi_m(\sigma)=\sqrt{n}\sigma$ satisfies 
\begin{align*}
n\sigma_m^2 \leq D_m \left( 2(\sqrt{C_m}+\sqrt{\pi})^2+\left(\ln \frac{n}{(\sqrt{C_m}+\sqrt{\pi})^2D_m}\right)_+ \right).
\end{align*}
\end{prop}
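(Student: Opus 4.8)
The statement splits into two essentially independent claims: that the proposed $\phi_m$ is admissible in Assumption (H), and the explicit control of the fixed point $n\sigma_m^2$. I would treat them in that order. For admissibility, note first that
\begin{align*}
\frac{\phi_m(\sigma)}{\sigma}=\sqrt{D_m}\left(\sqrt{C_m}+\sqrt{\pi}+\sqrt{\ln\tfrac{1}{\sigma\wedge 1}}\right)
\end{align*}
is constant on $[1,+\infty[$ and strictly decreasing on $]0,1]$, so $\sigma\mapsto\phi_m(\sigma)/\sigma$ is non-increasing on $]0,+\infty[$ and the equation $\phi_m(\sigma)/\sigma=\sqrt{n}\,\sigma$ has a unique root. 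The genuinely non-decreasing character of $\phi_m$ itself is the one delicate structural point, since $\phi_m$ has a mild local maximum slightly below $\sigma=1$; I would handle this by passing to the least non-decreasing majorant of $\phi_m$, which leaves the integral bound below valid (it only enlarges $\phi_m$) and, being constant on the offending interval, keeps the quotient non-increasing.

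\textbf{The entropy integral.} Inserting the hypothesis into Assumption (H) and using $\sqrt{a+b}\leq\sqrt{a}+\sqrt{b}$, for $\sigma\leq 1$,
\begin{align*}
\int_0^{\sigma}\sqrt{H_{[.],d^{\otimes n}}(\delta,S_m)}\,d\delta\leq\sqrt{D_m}\left(\sqrt{C_m}\,\sigma+\int_0^{\sigma}\sqrt{\ln\tfrac{1}{\delta}}\,d\delta\right),
\end{align*}
so the whole of part one reduces to the scalar integral $\int_0^{\sigma}\sqrt{\ln(1/\delta)}\,d\delta$. I would bound it with the subadditivity inequality $\sqrt{\ln(1/\delta)}\leq\sqrt{\ln(1/\sigma)}+\sqrt{\ln(\sigma/\delta)}$ valid for $\delta\leq\sigma$, then rescale $\delta=\sigma t$ in the second piece to get $\int_0^{\sigma}\sqrt{\ln(\sigma/\delta)}\,d\delta=\sigma\int_0^1\sqrt{\ln(1/t)}\,dt=\sigma\,\Gamma(3/2)=\sigma\sqrt{\pi}/2$ (substitution $u=\ln(1/t)$). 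This gives $\int_0^{\sigma}\sqrt{\ln(1/\delta)}\,d\delta\leq\sigma(\sqrt{\pi}+\sqrt{\ln(1/\sigma)})$, hence the left-hand side is at most $\phi_m(\sigma)$, exactly as required.

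\textbf{The range $\sigma>1$.} Here I would split the integral at $1$. On $[0,1]$ I reuse the previous bound evaluated at $\sigma=1$, giving $\sqrt{D_m}(\sqrt{C_m}+\sqrt{\pi})$. On $[1,\sigma]$, the hypothesis together with monotonicity of bracketing entropy in $\delta$ yields $H_{[.],d^{\otimes n}}(\delta,S_m)\leq D_mC_m$ (the logarithm being non-positive there), so this part contributes at most $(\sigma-1)\sqrt{D_mC_m}$. Summing the two pieces gives at most $\sigma\sqrt{D_m}(\sqrt{C_m}+\sqrt{\pi})=\phi_m(\sigma)$, which completes the verification of Assumption (H).

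\textbf{Controlling $n\sigma_m^2$.} Writing $B=\sqrt{C_m}+\sqrt{\pi}$, in the regime $\sigma_m<1$ the defining equation squares to $n\sigma_m^2=D_m(B+\sqrt{\ln(1/\sigma_m)})^2\leq D_m(2B^2+2\ln(1/\sigma_m))$ via $(a+b)^2\leq 2a^2+2b^2$. The main obstacle is that $\sigma_m$ reappears on the right through $\ln(1/\sigma_m)$; I would break this self-reference by bootstrapping the crude bound $\sqrt{n}\,\sigma_m\geq\sqrt{D_m}\,B$ (obtained by dropping the log term), which forces $n>B^2D_m$ and $\ln(1/\sigma_m)\leq\tfrac{1}{2}\ln(n/(B^2D_m))$, whence $n\sigma_m^2\leq D_m(2B^2+\ln(n/(B^2D_m)))$. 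In the complementary regime $\sigma_m\geq 1$ the equation gives $n\sigma_m^2=B^2D_m$ with $n\leq B^2D_m$, so the logarithm is non-positive and the positive-part truncation absorbs it while $B^2D_m\leq 2B^2D_m$; together the two regimes yield $n\sigma_m^2\leq D_m(2B^2+(\ln(n/(B^2D_m)))_+)$, which is the claim after substituting $B^2=(\sqrt{C_m}+\sqrt{\pi})^2$.
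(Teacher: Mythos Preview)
The paper does not actually prove this proposition; it is quoted from \textcite{ConditionalDensity} and used as a black box. Your reconstruction is correct and follows the expected route: split $\sqrt{C_m+\ln(1/\delta)}$ by subadditivity of the square root, reduce the remaining entropy integral to $\int_0^1\sqrt{\ln(1/t)}\,dt=\Gamma(3/2)=\sqrt{\pi}/2$ via the rescaling $\delta=\sigma t$, extend to $\sigma>1$ by monotonicity of bracketing entropy, and control the fixed point by bootstrapping from the crude lower bound $\sqrt{n}\,\sigma_m\ge B\sqrt{D_m}$ with $B=\sqrt{C_m}+\sqrt{\pi}$.

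One point of care. You correctly observe that $\phi_m$ is not literally non-decreasing near $\sigma=1$ and pass to the least non-decreasing majorant $\tilde\phi_m$. That repairs Assumption~(H), but then the $\sigma_m$ feeding Theorem~\ref{thm:general} is the root of $\tilde\phi_m(\sigma)/\sigma=\sqrt{n}\,\sigma$, not of the original equation, while your fixed-point bound is derived for the original $\phi_m$. The two roots coincide except on the short flat stretch $[\sigma^*,\sigma^{**}]$; there one checks that $n\tilde\sigma_m^2\le D_m(B+u^*)^2$ with $u^*=\sqrt{\ln(1/\sigma^*)}=\tfrac{1}{\sqrt{B^2+2}+B}\le\tfrac{1}{2B}$, so that $(B+u^*)^2\le 2B^2$ since $B\ge\sqrt{\pi}$, and the stated bound still holds. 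This is a short verification rather than a gap, but making it explicit closes the majorant argument completely.
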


We show in Appendix that if
\begin{align*}
  H_{d_{\|\sup\|_{\infty}}}(\sigma,W_K) \leq \dim(W_K) \left( C_{W_K} + \ln
    \frac{1}{\sigma} \right)
\end{align*}
and
\begin{align*}
  H_{\max_k \sup_x \|\|_2}(\sigma,\Upsilon_K) \leq \dim(\Upsilon_K) \left( C_{\Upsilon_K} + \ln
    \frac{1}{\sigma} \right)
\end{align*}
then, if $n\geq1$, the complexity of the corresponding model $S_m$ satisfies
\begin{align*}
n\sigma_m^2 &\leq D_m \left( 2(\sqrt{C_m}+\sqrt{\pi})^2+\left(\ln \frac{n}{(\sqrt{C_m}+\sqrt{\pi})^2D_m}\right)_+ \right)\\
&\leq D_m \left( 2(\sqrt{C_m}+\sqrt{\pi})^2 +\ln(n) \right)\\
&\leq D_m (C'_m+\ln(n))   
\end{align*}
with $C'_m$ that depends only on the constants defining $V_K$ and the
constants $C_{W_K}$ and $C_{\Upsilon_K}$.
In order to obtain the same constant $C'_m$ for all models, we impose
that the dimension bound holds with the same constants for all models:
\begin{description}
\item[Assumption (DIM)] There exist two constants $C_W$ and
  $C_{\Upsilon}$ such that,
for every model $S_m$ in the collection
  $\mathcal{S}$, 
\begin{align*}
H_{\max_k \|\|_{\infty}}(\sigma,W_K) \leq \dim(W_K) \left( C_{W} + \ln
    \frac{1}{\sigma} \right).
\end{align*}
and
\begin{align*}
  H_{\max_k \sup_x \|\|_2}(\sigma,\Upsilon_K) \leq \dim(\Upsilon_K) \left( C_{\Upsilon} + \ln
    \frac{1}{\sigma} \right)
\end{align*}
\end{description}

We can now state our main result:
\begin{thm}
\label{theo:selectdim}
For any collection of  Gaussian regression
mixtures satisfying (K) and (DIM), there is a constant $C$ such that
for any $\rho\in(0,1)$ and any $C_1>1$, there is a constant
$\kappa_0$ depending only on $\rho$ and
$C_1$ such that,
as soon as for every index $m \in \mathcal{M}$,
$pen(m) = \kappa ((C+ \ln n) \dim(S_m) +x_m)$ with $\kappa >\kappa_0$,
the penalized likelihood estimate $\widehat{s}_{\widehat{m}}$
with $\widehat{m}$ such that 
\[
\sum_{i=1}^n - \ln(\widehat{s}_{\widehat{m}}(Y_i|X_i) )+pen(\widehat{m}) \leq 
\inf_{m \in
  \mathcal{M}} \left(  \sum_{i=1}^n - \ln(\widehat{s}_m(Y_i|X_i) )+pen(m) \right)
+ \eta'
\]
satisfies
\begin{align*}
&\mathbb{E}\left[\JKL_{\rho}^{\otimes n}(s_0,\widehat{s}_{\widehat{m}})\right]\\
&\qquad\quad\leq C_1  \inf_{m \in \mathcal{M}} \left(\inf_{s_m \in
    S_m} \KL_{\lambda}^{\otimes n}(s_0,s_m) + \frac{\pen(m)}{n} + \frac{\kappa_0\Xi+\eta + \eta'}{n} \right).
\end{align*}
\end{thm}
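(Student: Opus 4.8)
The plan is to derive Theorem~\ref{theo:selectdim} as a direct application of the general model selection theorem (Theorem~\ref{thm:general}) to the specific Gaussian regression mixture collection. The two abstract hypotheses that require verification are Assumption~(H) on the Dudley integral of the bracketing entropies and Assumption~(Sep); Assumption~(K) is taken as given in the statement. Once these are checked, the abstract theorem provides the penalty threshold $\pen(m)\geq\kappa(n\sigma_m^2+x_m)$, and the remaining work is purely to replace the complexity term $n\sigma_m^2$ by its dimensional bound $(C+\ln n)\dim(S_m)$ so that the concrete penalty $\pen(m)=\kappa((C+\ln n)\dim(S_m)+x_m)$ falls above this threshold.

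\textbf{Step 1: bracketing entropy of $S_m$.} First I would establish a bracketing entropy bound of the form $H_{[.],d^{\otimes n}}(\delta,S_m)\leq D_m(C_m+\ln(1/\delta))$ for all $\delta\in[0,\sqrt2]$, where $D_m=\dim(S_m)$. This is the technical heart of the argument and is exactly the content deferred to the Appendix in the excerpt. The strategy is to cover each of the three parametric ingredients separately, using the metric entropy hypotheses supplied by Assumption~(DIM): the logistic weights through $H_{d_{\|\sup\|_\infty}}(\sigma,W_K)$, the means through $H_{\max_k\sup_x\|\|_2}(\sigma,\Upsilon_K)$, and the covariances through the finite-dimensional parametrization of $V_K$. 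The main obstacle here is propagating a sup-norm cover of the parameters $(w,\upsilon,\Sigma)$ into a genuine \emph{bracketing} cover of the conditional densities in the Hellinger-type metric $d^{\otimes n}$: one must bound $d(s_{K,\upsilon,\Sigma,w},s_{K,\upsilon',\Sigma',w'})$ by the parameter discrepancies, which requires controlling how the Gaussian density and the logistic link react to perturbations of their arguments (Lipschitz-type estimates on $\Phi_{\upsilon,\Sigma}$ in $\upsilon,\Sigma$ and on $\pi_{w,k}$ in $w$), and then converting an $\varepsilon$-net into brackets by slightly inflating the upper function and deflating the lower one. The compactness of $W_K$, $\Upsilon_K$, $V_K$ and the uniform lower/upper bounds $L_-,L_+,\lambda_-,\lambda_+$ on the covariance spectrum are precisely what make these estimates uniform and keep the resulting constant $C_m$ controlled.

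\textbf{Step 2: from entropy to complexity.} Given the bound of Step~1, Proposition~\ref{propsigma_m} immediately yields that the function $\phi_m(\sigma)=\sigma\sqrt{D_m}(\sqrt{C_m}+\sqrt\pi+\sqrt{\ln(1/(\sigma\wedge1))})$ satisfies Assumption~(H), and furnishes the explicit control of the root $\sigma_m$. Then, as displayed in the excerpt, the chain of inequalities
\begin{align*}
n\sigma_m^2\leq D_m\Bigl(2(\sqrt{C_m}+\sqrt\pi)^2+\ln n\Bigr)\leq D_m(C_m'+\ln n)
\end{align*}
holds for $n\geq1$, and Assumption~(DIM) guarantees that $C_m'$ can be taken equal to a single constant $C$ independent of $m$ (depending only on $C_W$, $C_\Upsilon$ and the fixed constants defining $V_K$). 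Hence $n\sigma_m^2\leq(C+\ln n)\dim(S_m)$ uniformly in $m$.

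\textbf{Step 3: assembling the oracle inequality.} With $C$ so defined, the concrete penalty satisfies
\begin{align*}
\pen(m)=\kappa\bigl((C+\ln n)\dim(S_m)+x_m\bigr)\geq\kappa\bigl(n\sigma_m^2+x_m\bigr),
\end{align*}
so the hypothesis $\pen(m)\geq\kappa(n\sigma_m^2+x_m)$ of Theorem~\ref{thm:general} is met for every $m$ whenever $\kappa>\kappa_0$. I would then verify Assumption~(Sep), which is routine in this setting: the countable subset $S_m'$ is obtained by restricting the parameters $(w,\upsilon,\Sigma)$ to rational (or otherwise countable dense) values inside the compact sets, and continuity of $(w,\upsilon,\Sigma)\mapsto\ln s_{K,\upsilon,\Sigma,w}(y|x)$ gives the required pointwise logarithmic convergence off a null set. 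Applying Theorem~\ref{thm:general} then delivers the stated oracle inequality, after absorbing the additive constant $C_1\kappa_0\Xi/n$ into the infimum form of the bound exactly as written. I expect the genuinely delicate part to be Step~1; Steps~2 and~3 are essentially bookkeeping once the entropy estimate is in hand.
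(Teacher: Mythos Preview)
Your three-step plan matches the paper's proof exactly: the paper reduces everything to the bracketing-entropy bound of your Step~1, then invokes Proposition~\ref{propsigma_m} and Theorem~\ref{thm:general} precisely as you describe, with (Sep) dismissed as routine. For Step~1 the paper's implementation uses a decomposition lemma $H_{[.],\sup_x d_y}(\delta,S_m)\leq H_{[.],\sup_x d_k}(\delta/5,\mathcal P)+H_{[.],\sup_x\max_k d_y}(\delta/5,\mathcal G)$ separating logistic weights from Gaussian components, and then builds explicit Gaussian brackets of the form $(1+\kappa\delta_\Sigma)^{\pm p}\Phi_{\tilde\upsilon,(1+\delta_\Sigma)^{\pm1}\tilde\Sigma}$ --- multiplicative dilation of the covariance rather than the additive ``inflate/deflate'' your wording suggests --- which is the one genuinely non-obvious device your outline leaves implicit.
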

In the previous theorem, the assumption on $\pen(m)$ could be replaced
by the milder one
\[
\pen(m) \geq \kappa \left(
2D_m C^2+D_m\left(\ln \frac{n}{C^2D_m}\right)_+ +x_m\right).
\]
To minimize arbitrariness, 
$x_m$ should be chosen such that $\frac{2 \kappa x_m}{\pen(m)}$ is as
small as possible. Notice that the constant $C$ only depends on the
model collection parameters, for instance on the maximal number of
components $K_{\max}$. As often in model selection, the collection may
be chosen according to to the sample size $n$. If the constant $C'$
grows no faster than $\ln(n)$, the penalty shape can be kept intact
and a similar result holds uniformly in $n$ up to a slightly larger
$\kappa_0$. For instance, as $K_{\max}$ only appears in $C$ through a
logarithmic term, $K_{\max}$ may grow as a power of the sample size.

We postpone the proof of this theorem to the Appendix and focus on
Assumption (DIM). This assumption can often be verified when the functions sets $W_K$
and $\Upsilon_K$ are defined as images of a finite dimensional compact
subset of parameters when $X\in[0,1]^d$. For example, those sets can be
defined as linear combination of a finite set of bounded functions whose
coefficients belong to a compact set. We study here the case of linear
combination of the first elements of a polynomial basis but similar
results hold, up to some modification on the coefficient sets, for many other
choices (first elements of a Fourier, spline or wavelet basis,
elements of an arbitrary bounded dictionary...)

Let $\degW$ and $\degUpsilon$ be two integers and $\maxaW$ and $\maxaUpsilon$ some positive numbers. We define
\begin{align*}
W&=\left\{w :[0;1]^d\rightarrow \mathbb{R}| w(x)=\sum_{|r|=0}^{\degW} \alpha_r x^r \mbox{ and } \|\alpha\|_{\infty}\leq \maxaW\right\}\\
\Upsilon&=\left\lbrace
\upsilon:[0;1]^d \rightarrow \mathbb{R}^p \Big| \forall j \in \{1,\ldots,p\}, \forall x, \upsilon_j(x)=\sum_{|r|=0}^{\degUpsilon} \alpha_r^{(j)} x^r \mbox{ and } \|\alpha\|_\infty\leq \maxaUpsilon
\right\rbrace
\end{align*} 
Let $W_K = \{0\} \times W^{K-1}$ and $\Upsilon_K = \Upsilon^K$.

We prove in Appendix that
\begin{lem}
\label{entropieAppli}
$W_K$ and $\Upsilon_K$ satisfy assumption (DIM), with $
C_W=\ln
\left(\sqrt{2}+ \maxaW \binom{\degW+ d}{d} \right)$ and 
$C_\Upsilon=\ln \left(\sqrt{2}+\sqrt{p}\binom{\degUpsilon +d}{d}\maxaUpsilon\right)$, not depending on $K$.
\end{lem}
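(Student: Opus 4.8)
The goal is to bound the $d_{\|\sup\|_\infty}$-bracketing (in fact covering) entropy of $W_K$ and the analogous metric entropy of $\Upsilon_K$ by an expression of the form $\dim(F_K)(C + \ln\frac{1}{\sigma})$, with a constant $C$ independent of $K$. The plan is to reduce the problem to covering a compact set of coefficient vectors in a finite-dimensional space, using the fact that the map from coefficients to functions is Lipschitz in the relevant sup-type metric.

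\emph{First I would treat $W$.} A function $w\in W$ is specified by its coefficient vector $\alpha = (\alpha_r)_{|r|\le \degW}$, and the number of multi-indices $r$ with $|r|\le \degW$ in $d$ variables is exactly $\binom{\degW+d}{d}$; this is $\dim(W)$. Since $x\in[0;1]^d$ we have $|x^r|\le 1$, so for two coefficient vectors $\alpha,\beta$ the corresponding functions satisfy
\begin{align*}
\sup_{x\in[0;1]^d}|w_\alpha(x)-w_\beta(x)|
= \sup_{x}\Big|\sum_{|r|\le \degW}(\alpha_r-\beta_r)x^r\Big|
\le \binom{\degW+d}{d}\,\|\alpha-\beta\|_\infty.
\end{align*}
Thus an $\eta$-cover of the coefficient cube $\{\|\alpha\|_\infty\le \maxaW\}$ in $\|\cdot\|_\infty$ yields a $\binom{\degW+d}{d}\eta$-cover of $W$ in sup-norm. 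The coefficient cube is a product of $\dim(W)$ intervals of length $2\maxaW$, so it admits an $\eta$-cover of cardinality at most $\big(\tfrac{2\maxaW}{2\eta}+1\big)^{\dim(W)}=\big(\tfrac{\maxaW}{\eta}+1\big)^{\dim(W)}$. Choosing $\eta = \sigma / \binom{\degW+d}{d}$ to get a $\sigma$-cover of $W$ and taking logarithms gives $H_{\|\cdot\|_\infty}(\sigma,W)\le \dim(W)\ln\big(\maxaW\binom{\degW+d}{d}/\sigma + 1\big)$. The remaining task is purely algebraic: bound $\ln(a/\sigma+1)$ by $C_W+\ln\frac{1}{\sigma}$ on the relevant range. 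For $K$-tuples in $W_K=\{0\}\times W^{K-1}$, the metric $d_{\max_k\|\cdot\|_\infty}$ is the max over coordinates, so a product of per-coordinate $\sigma$-covers is a $\sigma$-cover of the tuple; the $\log$-cardinality adds across the $K-1$ free coordinates, giving $\dim(W_K)=(K-1)\dim(W)$ while leaving the constant $C_W$ unchanged. This independence of $C_W$ from $K$ is the crux of why the stated constant works.

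\emph{The argument for $\Upsilon$ is the same, up to the vector-valued correction.} Here each $\upsilon:[0;1]^d\to\mathbb{R}^p$ has $p$ components, each a polynomial of degree $\le \degUpsilon$, so its coefficient vector lives in a cube of dimension $p\binom{\degUpsilon+d}{d}$ and the metric on the output side is $\max_k\sup_x\|\cdot\|_2$ in $\mathbb{R}^p$. The only new ingredient is that passing from the coordinatewise sup-norm to the Euclidean norm on $\mathbb{R}^p$ costs a factor $\sqrt{p}$, which is exactly why the $\sqrt{p}$ appears in $C_\Upsilon$; otherwise the Lipschitz-reduction, the product-cube covering, and the $K$-independence are identical.

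\emph{The main obstacle I anticipate} is not the counting but the final calibration of the additive constant: one must verify that the bound $\ln\big(a/\sigma+1\big)$ can be written as $\ln(\sqrt{2}+a)+\ln\frac{1}{\sigma}$ uniformly on the range of $\sigma$ that matters (here $\sigma\in[0,\sqrt{2}]$, matching the domain in Proposition \ref{propsigma_m}), which is where the specific $\sqrt{2}$ inside the stated $C_W$ and $C_\Upsilon$ originates. Concretely one uses $a/\sigma + 1 \le (\sqrt{2}+a)/\sigma$ for $\sigma\le\sqrt{2}$, so that $\ln(a/\sigma+1)\le \ln(\sqrt{2}+a)+\ln\frac{1}{\sigma}$; inserting $a=\maxaW\binom{\degW+d}{d}$ for $W$ and $a=\sqrt{p}\binom{\degUpsilon+d}{d}\maxaUpsilon$ for $\Upsilon$ yields precisely the claimed constants. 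Care is also needed to phrase everything in terms of the exact entropy notion used in Assumption (DIM) and to confirm the per-coordinate bounds assemble correctly into the tuple metric, but these are routine once the Lipschitz reduction is in place.
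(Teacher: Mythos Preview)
Your proposal is correct and follows essentially the same approach as the paper: reduce to covering the coefficient cube via the Lipschitz map $\alpha\mapsto w_\alpha$ (with Lipschitz constant $\binom{\degW+d}{d}$ for $W$ and $\sqrt{p}\binom{\degUpsilon+d}{d}$ for $\Upsilon$), use the standard $\ell_\infty$-covering bound for the cube, and then absorb the ``$+1$'' into $\sqrt{2}/\sigma$ on the range $\sigma\le\sqrt{2}$ to extract the stated constants. The only cosmetic difference is that the paper obtains the $\sqrt{p}$ factor for $\Upsilon$ via Cauchy--Schwarz on the monomials rather than via the inequality $\|\cdot\|_2\le\sqrt{p}\|\cdot\|_\infty$ you use, but both routes yield the identical bound.
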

To apply Theorem~\ref{theo:selectdim}, it remains to describe a
collection $(S_m)$ and a suitable choice for $(x_m)$. Assume, for
instance, that the models in our collection are defined by an arbitrary
maximal number of components $K_{\max}$, a common free structure for the covariance
matrix $K$-tuple and a common maximal degree for the sets $W_K$ and
$\Upsilon_K$, then one can verify that $\dim(S_m)=(K-1+Kp)
\binom{\degW+d}{d}+Kp\frac{p+1}{2}$ and that the weight family $(x_m = K)$
satisfy Assumption (K) with $\Xi\leq 1/(e-1)$. Theorem~\ref{theo:selectdim} yields then an
oracle inequality with $\pen(m)=\kappa \left((C+ \ln(n)) \dim(S_m)+x_m
\right)$. Note that as $x_m \ll (C +\ln(n)) \dim(S_m)$, one can obtain a
similar oracle inequality with $\pen(m)=\kappa (C+ \ln(n)) \dim(S_m)$ for
a slightly larger $\kappa$. Finally, as explained in the proof,
choosing a covariance structure from the finite collection of
\textcite{Celeux} or choosing the maximal degree for the sets $W_K$ and
$\Upsilon_K$ among a finite family can be obtained with the same
penalty but with a larger constant $\Xi$ in Assumption (K).

\section{Numerical scheme and numerical experiment}

We illustrate our theoretical result in a setting similar to
the one considered by~\citet{Chamroukhi3}. We observe $n$ pairs
$(X_i,Y_i)$ with $X_i\in[0,1]$ and $Y_i\in\mathbb{R}$ and look for the
best estimate of the conditional density $s_0(y|x)$ that can be
written
\begin{align*}
s_{K,\upsilon,\Sigma,w}(y | x)= \sum_{k=1}^{K} \pi_{w,k}(x) \Phi_{\upsilon_k(x),\Sigma_k}(y),
\end{align*}
with $w \in W_K$ and $\upsilon \in \Upsilon_K$. We consider the simple
case where $W_K$ and $\Upsilon_K$ comprise linear functions. We do not impose any structure on the covariance matrices. 
 Our aim is to estimate the \emph{best} number of components
$K$, as well as the model parameters. As described with more details
later, we use an EM type algorithm to
estimate the model parameters for each $K$ and select one using the
penalized approach described previously.

\begin{figure}
\centering
%\begin{subfigure}[b]{0.49\textwidth}
\subfloat[2 000 data points of example P]{
  \centering
  \includegraphics[width=.49\textwidth]{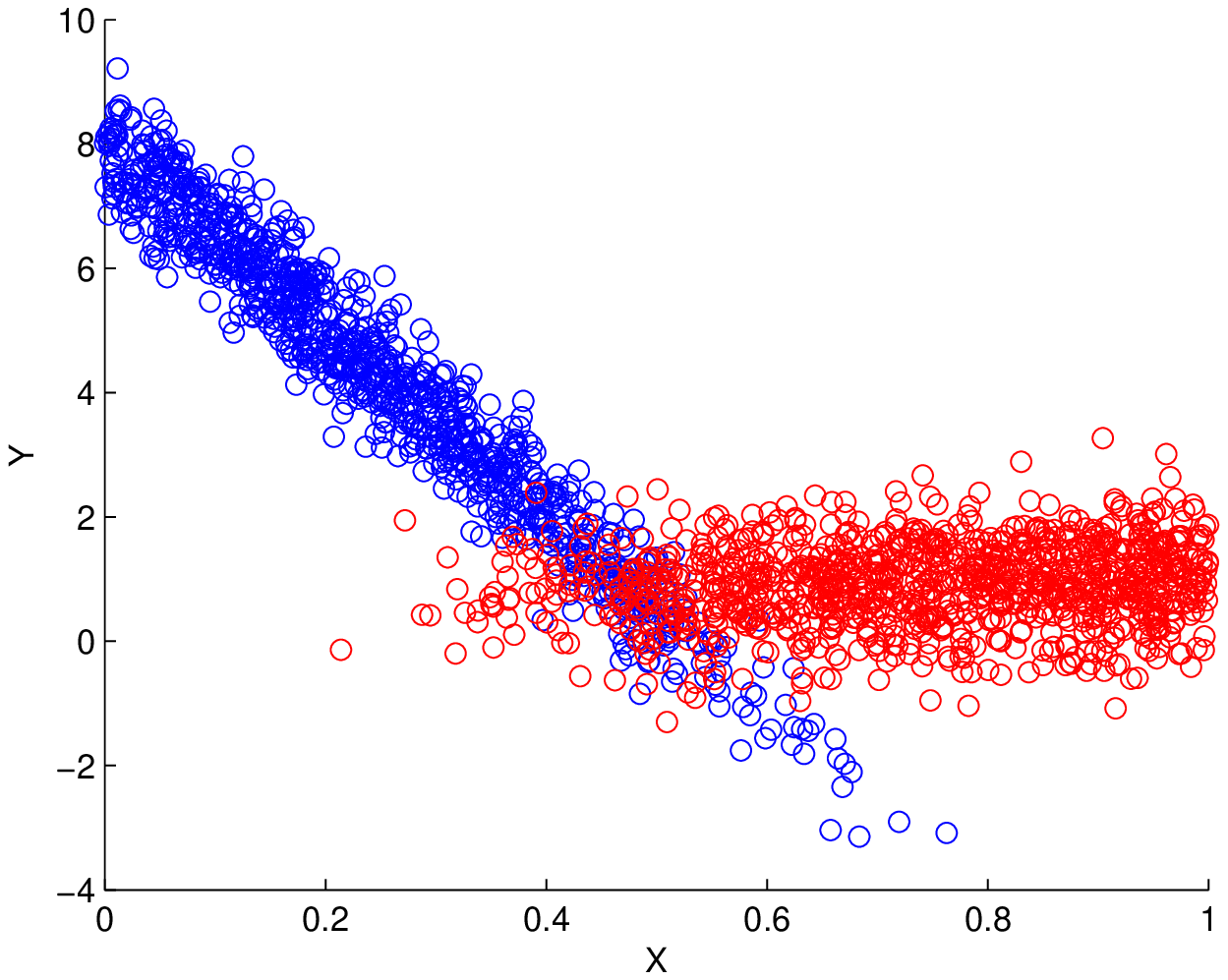}
%  \caption{2000 data points of example P}
}
% \end{subfigure}
% \begin{subfigure}[b]{0.49\textwidth}
\subfloat[2 000 data points of example NP]{
  \centering
  \includegraphics[width=.49\textwidth]{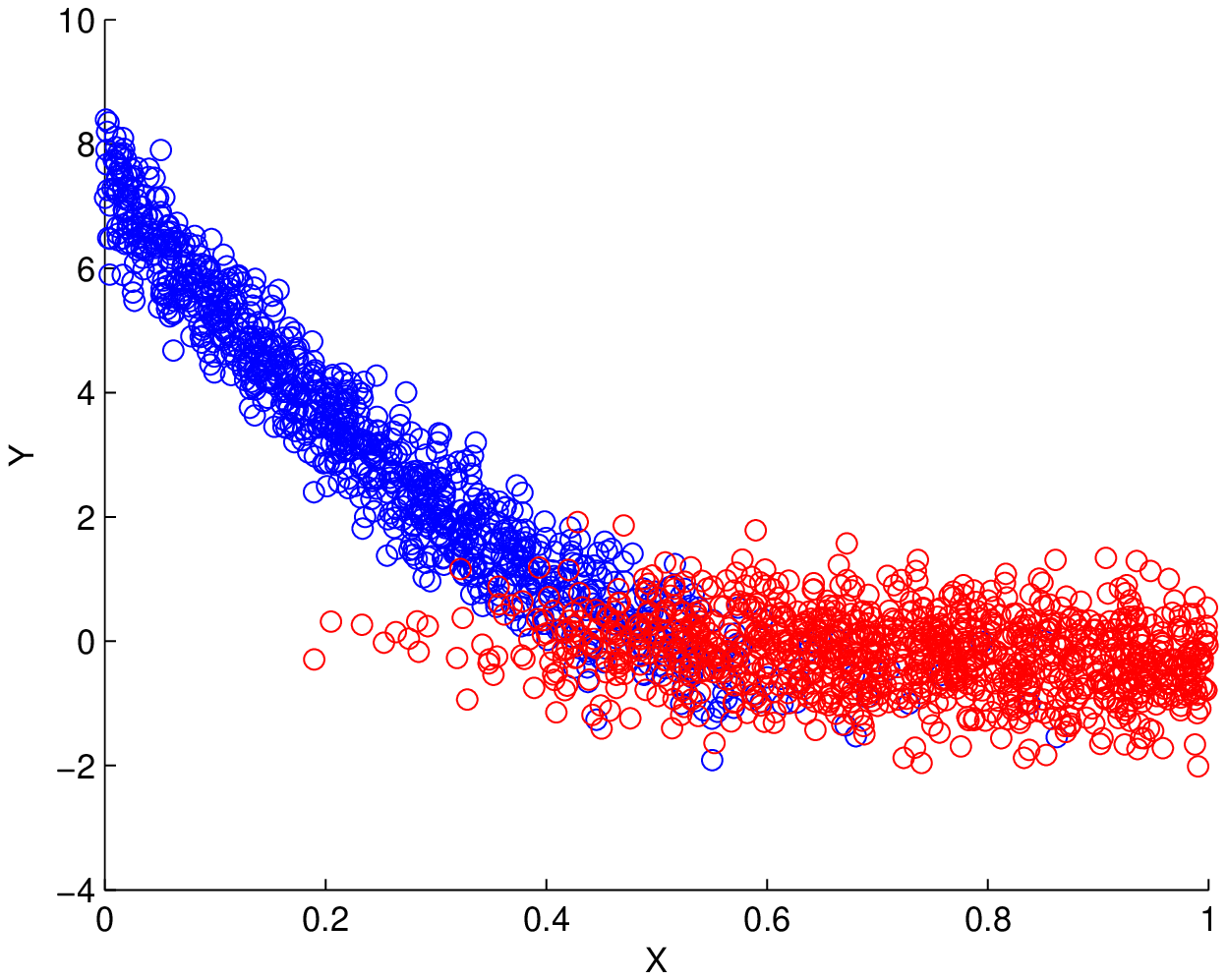}
 % \caption{2000 data points of example NP}
}
%\end{subfigure}
\caption{Typical realizations}
\label{fig:donnees}
\end{figure}
In our numerical experiment, we consider two different examples: one
in which true conditional density belongs to one of our models, a
\emph{parametric} case, and one
in which this is not true, a \emph{non parametric} case. In the first situation, we expect to
perform almost as well as the maximum likelihood estimation in the
true model. In the second situation, we expect our algorithm to
automatically balance the model bias and its variance.
More precisely, we let
 \[s_0(y | x)= \frac{1}{1+\exp(15x-7)}
 \Phi_{-15x+8,0.3}(y)+\frac{\exp(15x-7)}{1+\exp(15x-7)}
 \Phi_{0.4x+0.6,0.4}(y)\]
in the first example, denoted example P,
and 
\[s_0(y | x)= \frac{1}{1+\exp(15x-7)}
\Phi_{15x^2-22x+7.4,0.3}(y)+\frac{\exp(15x-7)}{1+\exp(15x-7)}
\Phi_{-0.4x^2,0.4}(y)\]
in the second example, denoted example NP. 
For both experiments, we let $X$ be uniformly
distributed over $[0,1]$. Figure~\ref{fig:donnees} shows a typical realization
for both examples.

As often in model selection approach, 
the first step is to compute the maximum likelihood estimate for each
number of components $K$. To this purpose, we use a numerical scheme
based on the EM algorithm~\citep{AlgoEM} similar to the one used by
\citet{Chamroukhi3}. The only difference with a classical EM is in the
Maximization step since there is no closed formula for the weights
optimization. We use instead a Newton type algorithm. 
Note that we only perform a few  Newton steps
(5 at most) and ensures that the likelihood does not decrease.
 We have noticed that
there is no need to fully optimize at each step: we did not observe a
better convergence and the algorithmic cost is high. We denote from
now on this algorithm \emph{Newton-EM}.
Figure~\ref{fig:Lvit} illustrates the fast convergence of this
algorithm towards a local maximum of the likelihood.
\begin{figure}
\centering
\includegraphics[width=0.5\textwidth]{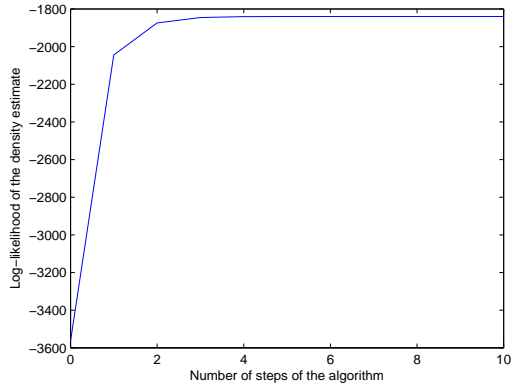}
\caption{Increase of the Log-likelihood of the estimated density 
at each step of our iterative \emph{Newton-EM} algorithm in the example
NP with 3 components and 2 000 data points.}
\label{fig:Lvit}
\end{figure}
Notice that the lower bound on the variance required in our theorem
appears to be necessary in practice. It avoids the spurious local
maximizer issue of EM algorithm, in which a class degenerates to a
minimal number of points allowing a perfect Gaussian regression fit.
We use a lower bound of $\frac{10}{n}$.
\citet{VarianceBound} provide a more precise data-driven bound:
$\frac{\min_{1\leq i<j \leq n}(Y_i-Y_j)^2}{2 \chi^2_{n-2K+1}((1-\alpha)^{1/K})}$, with $\chi^2_{n-2K+1}$ the chi-squared quantile function, which is of the same order as $\frac{1}{n}$ in our case.
In practice, the constant $10$ gave good results.

An even more important issue with EM algorithms is initialization, since the local minimizer obtained depends heavily on it. We observe that,
while the weights $w$ do not require a special care and can be
 simply initialized uniformly equal to
$0$, the means require much more attention in order to obtain a good
minimizer. We propose an initialization strategy which can be seen as an
extension of a Quick-EM scheme with random initialization.

We draw randomly $K$ lines, each defined as the line going through two
points $(X_i,Y_i)$ drawn at random among the observations. We perform
then a K-means clustering using the distance along the $Y$ axis. Our
\emph{Newton-EM} algorithm is initialized by the regression
parameters as well as the empirical variance on each of the $K$
clusters. We perform then $3$ steps of our minimization algorithm and
keep among $50$ trials the one with the largest likelihood. This
winner is used as the initialization of a final \emph{Newton-EM}
algorithm using 10 steps.

We consider two other strategies: a \emph{naive} one in which the
initial lines chosen at random and a common variance are used directly to initialize the
\emph{Newton-EM} algorithm and a \emph{clever} one in which
observations are 
first normalized in order to have a similar variance along both the
$X$ and the $Y$ axis, a K-means on both $X$ and $Y$ with $5$ times the
number of components is then performed and the initial lines are
drawn among the regression lines of the resulting cluster
comprising more than $2$ points.

The complexity of those procedures differs and as stressed by
\citet{Celeux} the fairest comparison is to perform them for the same
amount of time (5 seconds, 30 seconds, 1 minute...) and compare the
obtained likelihoods. The difference between the 3 strategies is not
dramatic: they yield very similar likelihoods. We nevertheless
observe that the \emph{naive} strategy has an important dispersion and
fails sometime to give a satisfactory answer. Comparison between  the
\emph{clever} strategy and the regular one is more complex since the
difference is much smaller. Following \citet{Celeux}, we have chosen
the regular one which corresponds to more random initializations and
thus may explores more local maxima.

Once the parameters' estimates have been computed for each $K$, we select the model that
minimizes
\begin{align*}
  \sum_{i=1}^n - \ln(\widehat{s}_m(Y_i|X_i) )+\pen(m)
\end{align*}
with $\pen(m)=\kappa \dim(S_m)$. Note that our theorem ensures that
there exists a $\kappa$ large enough for which the estimate has good
properties, but does not give an explicit value for $\kappa$. In
practice, $\kappa$ has to be chosen. The two most classical choices
are $\kappa=1$ and $\kappa=\frac{\ln n}{2}$ which correspond to the AIC and
BIC approach, motivated by asymptotic arguments.
 We have used here the slope heuristic proposed
by Birg\'{e} and Massart and described for instance
in~\citet{Slope}. It consists in representing the dimension of the
selected model according to $\kappa$ (fig~\ref{fig:Pente}), and
finding $\hat{\kappa}$ such that if $\kappa <\hat{\kappa} $, the
dimension of the selected model is large, and reasonable otherwise. 
The slope heuristic prescribes then the use of $\kappa=2\hat{\kappa}$.
In both examples, we have noticed that the sample's size had no significant influence on the choice of $\kappa$, and that very often 1 was in the range of possible values indicated by the slope heuristic.
 According to this observation, we have chosen in both examples $\kappa=1$.
\begin{figure}
\centering
%\begin{subfigure}[b]{0.49\textwidth}
\subfloat[Example P with 2 000 points]{
  \centering
  \includegraphics[width=.49\textwidth]{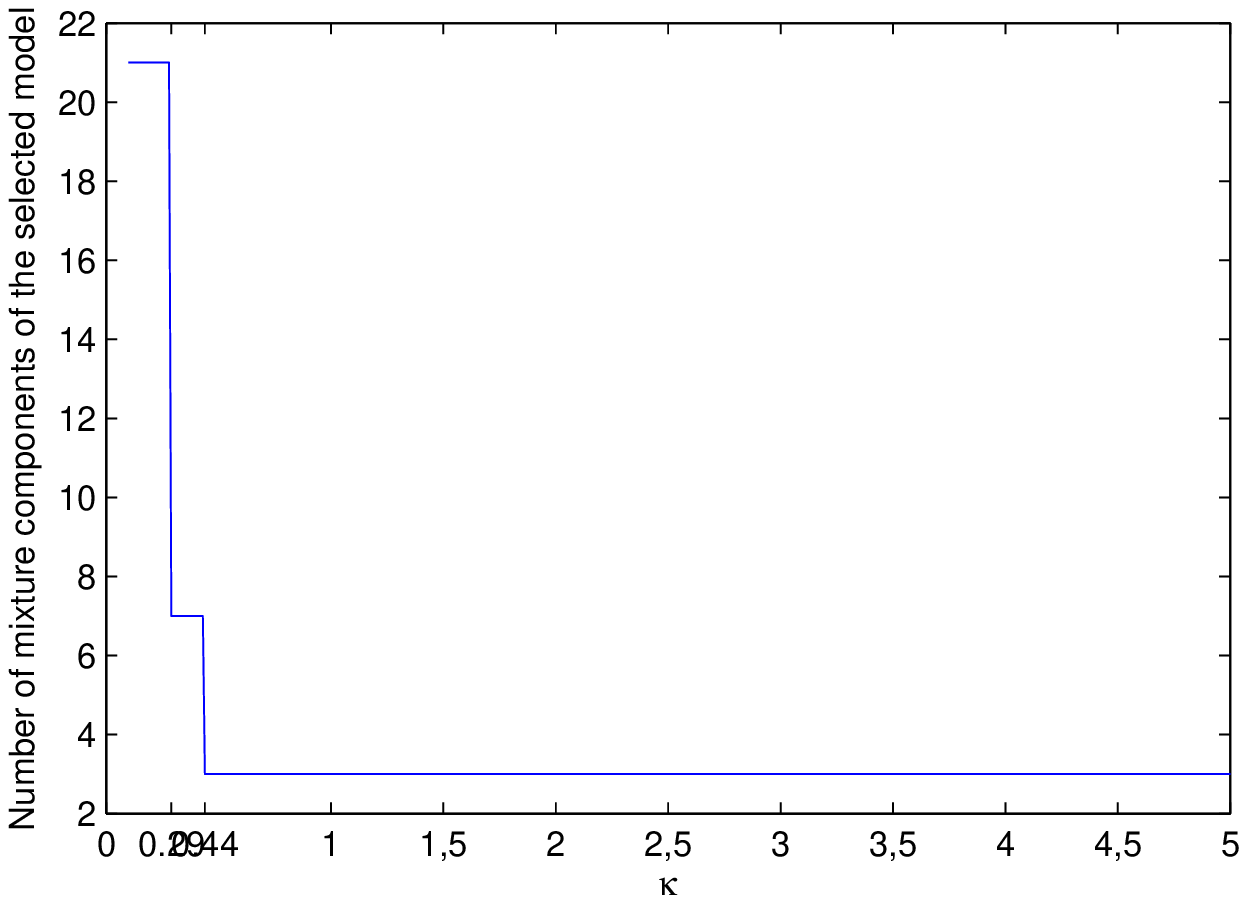}
%  \caption{Example P with 2000 points}
  \label{fig:Pente1}
%\end{subfigure}
}
%\begin{subfigure}[b]{0.49\textwidth}
\subfloat[Example NP with 2 000 points]{
  \centering
  \includegraphics[width=.49\textwidth]{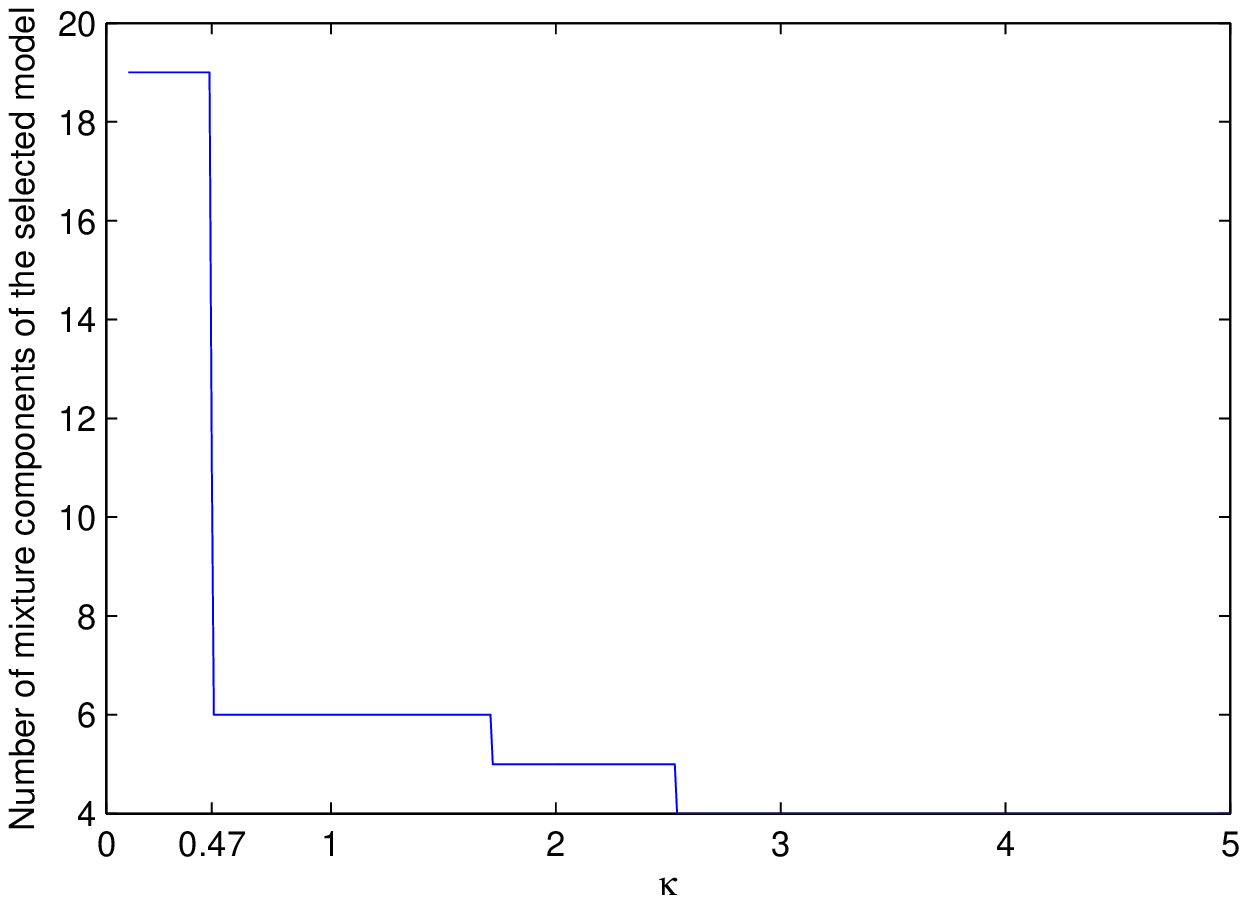}
%  \caption{Example NP with 2000 points}
  \label{fig:Pente2}
%\end{subfigure}
}
\caption{Slope heuristic: plot of the selected model dimension with
  respect to the penalty coefficient $\kappa$. In both examples,
  $\widehat{\kappa}$ is of order $1/2$.}
\label{fig:Pente}
\end{figure}

We measure performances in term of tensorized Kullback-Leibler distance. 
Since there is no known formula for tensorized Kullback-Leibler
distance in the case of Gaussian mixtures, and since we know the true
density, we evaluate the distance using Monte Carlo method. The
variability of this randomized evaluation has been verified to be negligible in practice.

For several numbers of mixture components and for the selected K, we
draw in figure~\ref{fig:Oracle} the box plots and the mean of
tensorized Kullback-Leibler distance over $55$ trials. The first
observation is that the mean of tensorized Kullback-Leibler distance
between 
the penalized estimator $\hat{s}_{\hat{K}}$ and $s_0$ is smaller than the mean
of tensorized Kullback-Leibler distance between $\hat{s}_{K}$ ans
$s_0$ over $K\in\{1,\ldots,20\}$. This is in line with the oracle type
inequality of Theorem~\ref{theo:selectdim}. Our numerical results
hint that our theoretical analysis may be pessimistic. A close
inspection show that the bias-variance trade-off differs between the
two examples. Indeed, since in the first one the true density belongs
to the model, the best choice is $K=2$ even for small $n$. As shown on
the histogram of Figure~\ref{fig:histo}, this is almost always the
model chosen by our algorithm. Observe also that the mean of
Kullback-Leibler distance seems to behave like  $\frac{\dim(S_m)}{2n}$
(shown by a dotted line). This is indeed the expected behavior when
the true model belongs to a nested collection and corresponds to the
classical AIC heuristic. In the second example, the true model does
not belong to the collection. The best choice for $K$ should thus
balance a model approximation error and a variance one. We observe in
Figure~\ref{fig:histo} such a behavior: the larger $n$ the more
complex the model and thus $K$. Note that the slope of the mean error
seems also to grow like $\frac{\dim(S_m)}{2n}$ even though there is no
theoretical guarantee of such a behavior.
 
\begin{figure}
\centering
%\begin{subfigure}[b]{0.49\textwidth}
\subfloat[Example P with 2 000 data points]{
\centering
\includegraphics[width=.49\textwidth]{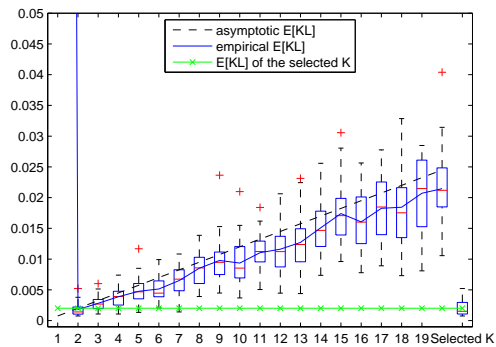}
% \caption{Example P with 2000 data points}
% \end{subfigure}
% \begin{subfigure}[b]{0.49\textwidth}
}
\subfloat[Example P with 10 000 data points]{
\centering
\includegraphics[width=.49\textwidth]{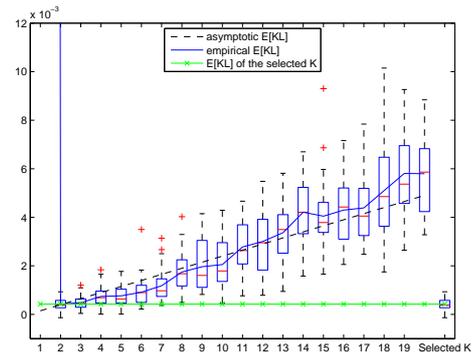}
% \caption{Example P with 10 000 data points}
% \end{subfigure}
% \begin{subfigure}[b]{0.49\textwidth}
}\\
\subfloat[Example NP with 2 000 data points]{
\centering
\includegraphics[width=.49\textwidth]{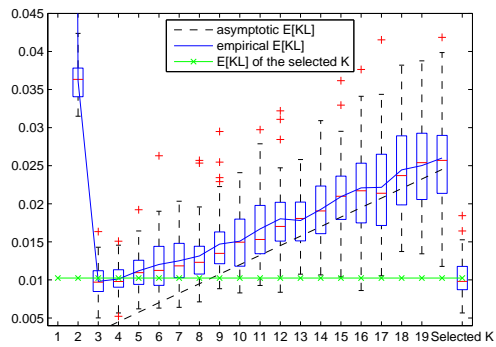}
% \caption{Example NP with 2000 data points}
% \end{subfigure}
% \begin{subfigure}[b]{0.49\textwidth}
}
\subfloat[Example NP with 10 000 data points]{
\centering
\includegraphics[width=.49\textwidth]{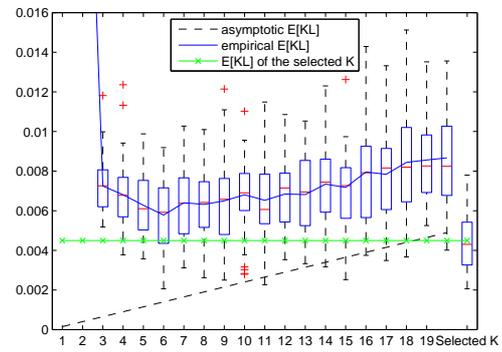}
% \caption{Example NP with 10 000 data points}
 }
%\end{subfigure}
\caption{Box-plot of the Kullback-Leibler distance according to the number
  of mixture components. On each graph, the right-most box-plot shows
  this Kullback-Leibler distance for the penalized estimator $\hat{s}_{\widehat{K}}$}
\label{fig:Oracle}
\end{figure}

\begin{figure}
\centering
\subfloat[Example P with 2 000 data points]{
% \begin{subfigure}[b]{0.49\textwidth}
 \centering
\includegraphics[width=.49\textwidth]{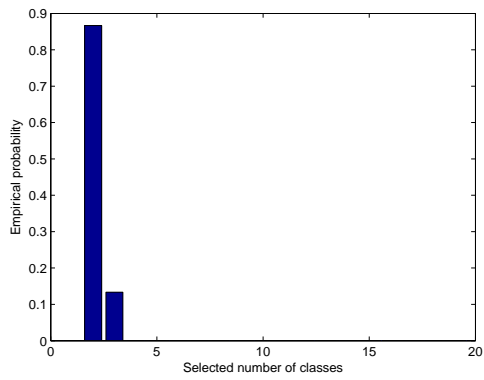}
% \caption{Example P with 2000 data points}
% \end{subfigure}
}
\subfloat[Example P with 10 000 data points]{
% \begin{subfigure}[b]{0.49\textwidth}
\centering
\includegraphics[width=.49\textwidth]{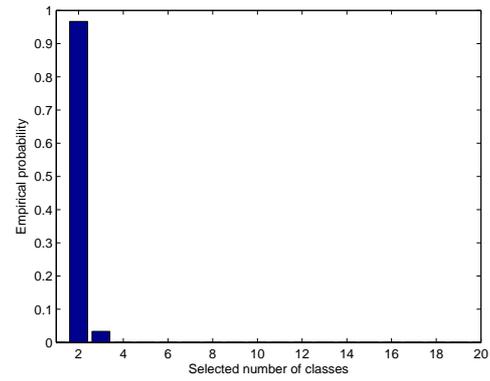}
% \caption{Example P with 10 000 data points}
% \end{subfigure}
}
\\
\subfloat[Example NP with 2 000 data points]{
% \begin{subfigure}[b]{0.49\textwidth}
\centering
\includegraphics[width=.49\textwidth]{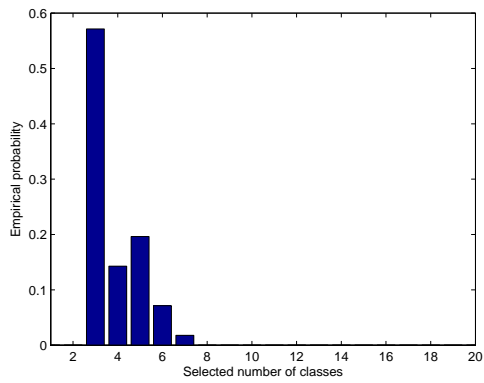}
% \caption{Example NP with 2000 data points}
% \end{subfigure}
}
\subfloat[Example NP with 10 000 data points]{

% \begin{subfigure}[b]{0.49\textwidth}
\centering
\includegraphics[width=.49\textwidth]{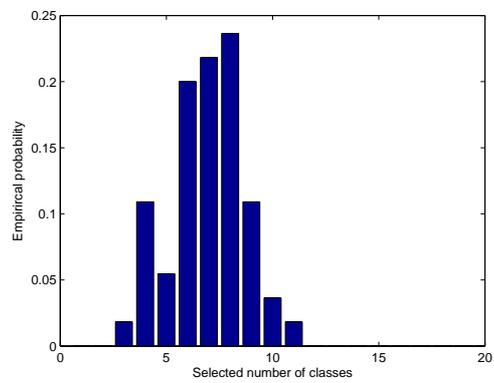}
% \caption{Example NP with 10 000 data points}
% \end{subfigure}
}
\caption{Histograms of the selected K}
\label{fig:histo}
\end{figure}

Figure~\ref{fig:KLn} shows the error decay when the sample size $n$
grows. As expected in the parametric case, example P, we observe the decay in
$t/n$ predicted in the theory, with $t$ some constant. The rate in the second case appears to
be slower. Indeed, as the true conditional density does not belong to
any model, the selected models are more and more complex when $n$
grows which slows the error decay. In our theoretical analysis, this
can already be seen in the decay of the \emph{variance} term of the
oracle inequality. Indeed, if we let $m_0(n)$ be the optimal oracle
model, the one minimizing the right-hand side of the oracle
inequality, the variance term is of order $\frac{D_{m_0(n)}}{n}$ which
is larger than $\frac{1}{n}$ as soon as $D_{m_0(n)} \to +\infty$. It
is well known that the decay depends on the regularity of the
true conditional density. Providing a minimax analysis of the proposed
estimator, as have done~\citet{maugis12:_adapt_gauss}, would be
interesting but is beyond the scope of this paper.

\begin{figure}
\centering
%\begin{subfigure}[b]{0.49\textwidth}
\subfloat[Example P. The slope of the free regression line is
    $\simeq -1,3$]{
  \centering
  \includegraphics[width=.49\textwidth]
  {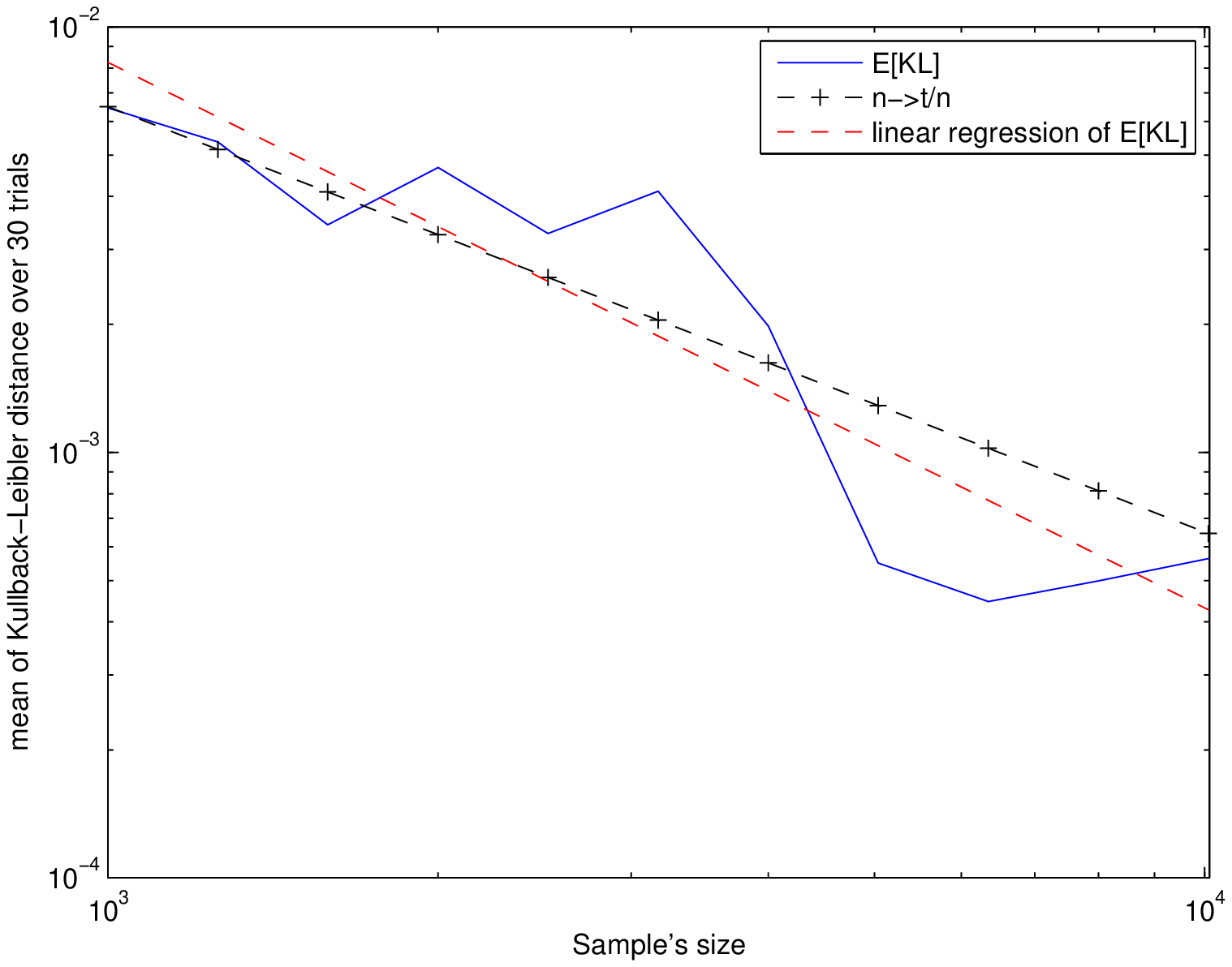}
%  \caption{Example P. The slope of the free regression line is
%    $\simeq -1,3$}
  \label{fig:KLn1}
%\end{subfigure}
}
\subfloat[Example NP. The slope of the regression line is $\simeq -0,6$.]{
% \begin{subfigure}[b]{0.49\textwidth}
%   \centering
  \includegraphics[width=.49\textwidth]
{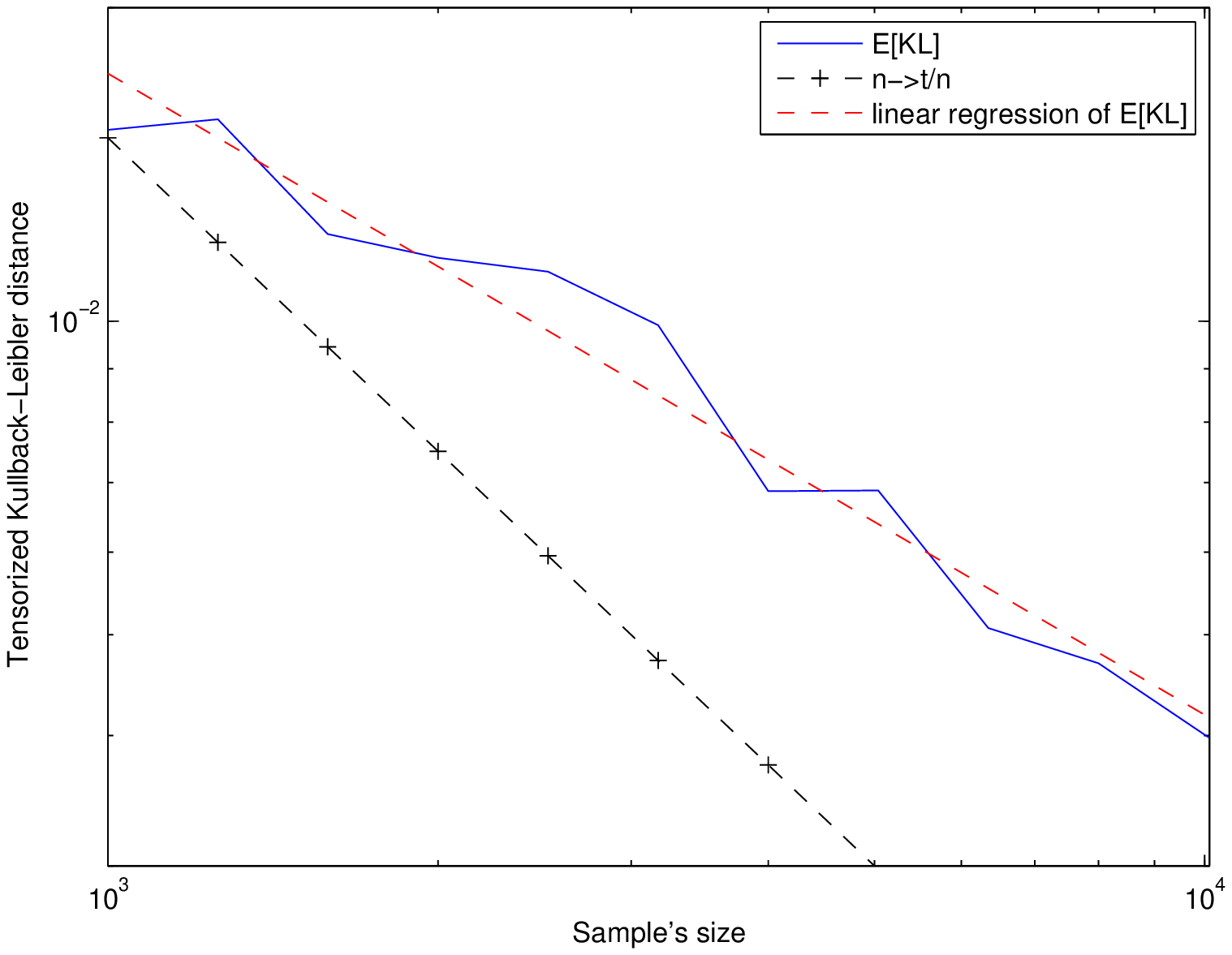}  
%  \caption{Example NP. The slope of the regression line is $\simeq -0,6$.}
  \label{fig:KLn2}
%\end{subfigure}
}
\caption{Kullback-Leibler distance between the true density and the
  computed density using $(X_i,Y_i)_{i \leq N}$ with respect to the
  sample size, represented in a log-log scale. For each graph, we
  added a free linear least-square regression and one with slope $-1$
  to stress the two different behavior.}
\label{fig:KLn}
\end{figure}

\appendix

\section{Proof of Theorem~\ref{theo:selectdim}}
In this section, an overview of the proof of the model selection theorem, applied to our Gaussian regression mixture, is given. 
\ref{appendiceB} is dedicated to the example with polynomial means and weights. The constants in the Assumption (DIM) and the theorem are specified. 
Then, in \ref{appendiceC}, we provide more details on the proofs and lemmas used in the first section.

We will show that Assumption (DIM) ensures that
for all $\delta \in [0;\sqrt{2}],$ $H_{[.],d^{\otimes n}} (\delta,S_m) \leq D_m
(C_m+\ln(\frac{1}{\delta}))$ with a common $C_m$. If this happens,
Proposition~\ref{propsigma_m}
yields the results.
In other words, if we can control models' bracketing entropy with a
uniform constant $\mathfrak{C}$, we get a suitable bound on the complexity.
This result will be obtain by first decomposing the entropy term
between the weights and the Gaussian mixtures. Therefore we use the following distance over conditional densities:
\begin{align*}
\underset{x} \sup  \,  d_y(s,t)=\underset{x \in \mathcal{X}} \sup  \,  \left( \int_y \left(\sqrt{s(y|x)}-\sqrt{t(y|x)}\right)^2 dy \right)^{\frac{1}{2}}.
\end{align*}
Notice that $d^{2\otimes n}(s,t) \leq \sup_x d_y^2(s,t)$.

For all weights $\pi$ and $\pi'$, we define
\begin{align*}
\underset{x} \sup  \,  d_k(\pi,\pi')=\underset{x \in \mathcal{X}} \sup  \,  \left(\sum_{k=1}^K \left(\sqrt{\pi_k(x)}-\sqrt{\pi'_k(x)}\right)^2\right)^{\frac{1}{2}}.
\end{align*}
Finally, for all densities $s$ and $t$ over $\mathcal{Y}$, depending on $x$, we set
\begin{align*}
\underset{x} \sup  \, \underset{k} \max \,d_y(s,t)&=\underset{x \in \mathcal{X}} \sup  \, \underset{1\leq k \leq K} \max \,  d_y(s_k(x,.),t_k(x,.))\\
&=\underset{x \in \mathcal{X}} \sup  \, \underset{1\leq k \leq K} \max \, \left( \int_y \left(\sqrt{s_k(x,y)}-\sqrt{t_k(x,y)}\right)^2 dy \right)^{\frac{1}{2}}.
\end{align*}

\begin{lem}
Let $\mathcal{P}=\left \{(\pi_{w,k})_{1 \leq k \leq K} /w \in W_K,\mbox{ and }\forall (k,x), \pi_{w,k}(x)=\frac{e^{w_k(x)}}{\sum_{l=1}^K e^{w_l(x)}} \right \}$ and\\
$\mathcal{G}=\left \{\left(\Phi_{\upsilon_k,\Sigma_k} \right)_{1 \leq k \leq K}/\upsilon \in \Upsilon_K, \Sigma \in V_K \right \}$.
Then for all $\delta$ in [0;$\sqrt{2}$], for all $m$ in $\mathcal{M}$,
\[
H_{[.],\underset{x} \sup \, d_{y}}\left(\delta,S_{m}\right) \leq H_{[.],\underset{x} \sup  \,  d_k}\left(\frac{\delta}{5},\mathcal{P}\right) + H_{[.],\underset{x} \sup  \, \underset{k} \max \,  d_y}\left(\frac{\delta}{5},\mathcal{G}\right).
\]
\end{lem}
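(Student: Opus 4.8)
We want to bound the bracketing entropy of the full model $S_m$ (with respect to the distance $\sup_x d_y$) by the sum of two bracketing entropies: one for the weight family $\mathcal{P}$ and one for the Gaussian component family $\mathcal{G}$, each computed at a reduced scale $\delta/5$. The natural strategy is to build a bracket for an arbitrary element $s_{K,\upsilon,\Sigma,w}\in S_m$ out of a bracket for its weight vector $\pi_w$ and a bracket for its tuple of Gaussians $(\Phi_{\upsilon_k,\Sigma_k})_k$, then verify that the product/combination construction yields a bracket of the right width.

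The plan is to build a bracket for each mixture $s_{K,\upsilon,\Sigma,w}=\sum_k \pi_{w,k}\,\Phi_{\upsilon_k,\Sigma_k}$ out of a bracket for its weight tuple and a bracket for its Gaussian tuple, and to show that combining a $\delta/5$-bracket drawn from a covering of $\mathcal{P}$ with a $\delta/5$-bracket drawn from a covering of $\mathcal{G}$ yields a $\delta$-bracket for $S_m$ in the $\sup_x d_y$ distance. Since every element of $S_m$ is determined by a pair $(\pi_w,(\Phi_{\upsilon_k,\Sigma_k})_k)\in\mathcal{P}\times\mathcal{G}$, the number of brackets needed for $S_m$ is then at most the product of the two bracketing numbers, and taking logarithms gives precisely the claimed additive entropy bound.

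Concretely, I fix $s=\sum_k \pi_k g_k \in S_m$ with $\pi=\pi_w\in\mathcal{P}$ and $g_k=\Phi_{\upsilon_k,\Sigma_k}$, and pick a bracket $[\pi^-,\pi^+]$ containing $\pi$ with $\sup_x d_k(\pi^-,\pi^+)\le\delta/5$ together with a tuple-bracket $([g_k^-,g_k^+])_k$ containing $(g_k)_k$ with $\sup_x \max_k d_y(g_k^-,g_k^+)\le\delta/5$. Setting $s^-=\sum_k \pi_k^- g_k^-$ and $s^+=\sum_k \pi_k^+ g_k^+$, nonnegativity together with the monotonicity $\pi_k^-\le\pi_k\le\pi_k^+$ and $g_k^-\le g_k\le g_k^+$ makes $[s^-,s^+]$ a genuine bracket for $s$. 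All the estimates below hold at each fixed $x$, so the final bound survives the supremum over $x$ and matches the three distances used in the statement.

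The heart of the argument is to control the width $d_y(s^-,s^+)$, for which I interpolate through $\bar s=\sum_k \pi_k^+ g_k^-$ and use the elementary reverse-triangle inequality in $\ell^2$: with $u_k=\sqrt{a_k g_k}$ and $v_k=\sqrt{b_k g_k}$ one has $\big(\sqrt{\sum_k a_k g_k}-\sqrt{\sum_k b_k g_k}\big)^2=(\|u\|_2-\|v\|_2)^2\le\|u-v\|_2^2=\sum_k(\sqrt{a_k}-\sqrt{b_k})^2 g_k$. Integrating in $y$ and using $\int g_k^-\le\int g_k=1$ bounds the weight-step $d_y(s^-,\bar s)$ by $d_k(\pi^-,\pi^+)\le\delta/5$; applying the same inequality with the weights fixed at $\pi_k^+$ bounds the Gaussian-step by $d_y(\bar s,s^+)^2\le\sum_k \pi_k^+\,d_y(g_k^-,g_k^+)^2\le(\delta/5)^2\sum_k\pi_k^+$, and the triangle inequality gives $d_y(s^-,s^+)\le d_y(s^-,\bar s)+d_y(\bar s,s^+)$.

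The one point that needs care, and what I expect to be the main obstacle, is that the bracket endpoints are not probability weights, so $\sum_k\pi_k^+$ may exceed $1$ and must be controlled. I would estimate it by the $\ell^2$ triangle inequality applied to $(\sqrt{\pi_k^+})_k=(\sqrt{\pi_k})_k+(\sqrt{\pi_k^+}-\sqrt{\pi_k})_k$: since $\sum_k\pi_k=1$ and $\sum_k(\sqrt{\pi_k^+}-\sqrt{\pi_k})^2\le\sum_k(\sqrt{\pi_k^+}-\sqrt{\pi_k^-})^2\le(\delta/5)^2$, one obtains $\sum_k\pi_k^+\le(1+\delta/5)^2$. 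Hence $d_y(s^-,s^+)\le\frac{\delta}{5}+(1+\frac{\delta}{5})\frac{\delta}{5}=\frac{\delta}{5}(2+\frac{\delta}{5})$, which for $\delta\le\sqrt2$ is at most $\delta$. This shows the combined bracket has width $\le\delta$ uniformly in $x$, and the multiplicativity of the bracket counts then yields the stated entropy inequality.
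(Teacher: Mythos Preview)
Your argument is correct and follows the same overall strategy as the paper's proof of this lemma (which in turn mimics Lemma~7 of \cite{ConditionalDensity}): form product brackets $\bigl[\sum_k(\pi_k^-)_+\,(g_k^-)_+,\ \sum_k\pi_k^+\,g_k^+\bigr]$ and bound their $\sup_x d_y$-width. Your pointwise reverse-triangle inequality in $\ell^2(k)$ is exactly the Cauchy--Schwarz step the paper packages as Lemmas~11--12, so that part is the same argument in different clothing.

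Where you genuinely diverge is in controlling $\sum_k\pi_k^+$. The paper invokes a separate Lemma~13 giving $\sum_k\pi_k^+\le 1+2(\sqrt{2}+\sqrt{3})\,\epsilon$ (with $\epsilon$ the inner scale), and then checks numerically that $\bigl(\sqrt{1+2(\sqrt{2}+\sqrt{3})\epsilon}+1\bigr)\epsilon\le 5\epsilon$. Your route---pick any true weight vector $\pi$ inside the bracket and use $\|\sqrt{\pi^+}\|_2\le\|\sqrt{\pi}\|_2+\|\sqrt{\pi^+}-\sqrt{\pi}\|_2\le 1+\epsilon$---is more elementary, gives the sharper bound $\sum_k\pi_k^+\le(1+\epsilon)^2$, and leads to the cleaner final estimate $d_y(s^-,s^+)\le\epsilon(2+\epsilon)$, well under $5\epsilon$ for $\epsilon\le\sqrt{2}/5$. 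This avoids the need for an auxiliary lemma altogether.

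One small point of hygiene: you assert nonnegativity of the lower bracket endpoints, but this is not automatic for an arbitrary bracketing cover. The paper handles this by replacing $\pi_k^-$ and $g_k^-$ with their positive parts $(\cdot)_+$, which can only shrink the bracket width and does not affect the cardinality of the cover; you should do the same explicitly before invoking square roots of $\pi_k^-$ and $g_k^-$.
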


One can then relate the bracketing entropy of $\mathcal{P}$ to the
entropy of $W_K$
\begin{lem}
\label{controlePoids}
For all $\delta \in [0;\sqrt{2}]$,
\begin{align*}
H_{[.],\underset{x}\sup \, d_k}\left(\frac{\delta}{5},\mathcal{P}\right)&\leq
H_{\underset{k}\max \, \|\|_{\infty}}\left(\frac{3\sqrt{3}\delta}{20\sqrt{K}},W_K \right)
\end{align*}
\end{lem}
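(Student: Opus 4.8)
The plan is to convert a sup-norm net of the function space $W_K$ into a bracketing cover of the weight family $\mathcal{P}$, the substance being to control how the logistic (softmax) map distorts distances. First I would fix $\delta\in[0;\sqrt2]$, set $\eta=\frac{3\sqrt3\,\delta}{20\sqrt K}$, and take a minimal $\eta$-net $(w^{(j)})_j$ of $W_K$ for the metric $\max_k\|\cdot\|_\infty$, so the number of net points equals $\exp\!\big(H_{\max_k\|\cdot\|_\infty}(\eta,W_K)\big)$. The goal is then to attach to each $w^{(j)}$ a single bracket $[\pi^{(j),-},\pi^{(j),+}]$ whose $\sup_x d_k$-diameter is at most $\delta/5$, and which contains every $\pi_w\in\mathcal{P}$ with $w$ in the $\eta$-ball around $w^{(j)}$. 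Counting these brackets and taking logarithms would yield the claimed inequality, since the bracketing number of $\mathcal{P}$ at scale $\delta/5$ is then at most the covering number of $W_K$ at scale $\eta$.

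For the envelope I would exploit the multiplicative stability of the softmax. If $\max_k\|w_k-w^{(j)}_k\|_\infty\le\eta$, then for every $x$ and $k$, writing $\pi_{w,k}(x)=e^{w_k(x)}/\sum_l e^{w_l(x)}$, perturbing each exponent by at most $\eta$ multiplies the numerator and the denominator by factors in $[e^{-\eta},e^{\eta}]$, so $e^{-2\eta}\,\pi_{w^{(j)},k}(x)\le \pi_{w,k}(x)\le e^{2\eta}\,\pi_{w^{(j)},k}(x)$. This suggests the component-wise bracket $\pi^{(j),\pm}_k(x)=e^{\pm2\eta}\pi_{w^{(j)},k}(x)$, which is valid and contains the whole $\eta$-ball uniformly in $x$. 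The clean way to see the underlying modulus of continuity is to differentiate along the segment $w(t)=(1-t)w^{(j)}+tw$: since
\[
\frac{d}{dt}\ln\sqrt{\pi_{w(t),k}}=\tfrac12\Big(\dot w_k-\sum_l\pi_{w(t),l}\,\dot w_l\Big)
\]
is a \emph{centered} quantity controlled by the sup-norm perturbation, the square-root weights $\sqrt{\pi_{w,k}}$ are Lipschitz in a way that is insensitive to which component is dominant.

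It remains to bound the $\sup_x d_k$-diameter of the bracket with the right constants. Using $\sum_k \pi_{w^{(j)},k}(x)=1$, the crude envelope has diameter $\sup_x d_k(\pi^{(j),-},\pi^{(j),+})=e^{\eta}-e^{-\eta}$; to match the stated constant I would instead keep the per-coordinate weighting $\big(\sqrt{\pi^{(j),+}_k}-\sqrt{\pi^{(j),-}_k}\big)^2=(e^{\eta}-e^{-\eta})^2\,\pi_{w^{(j)},k}$ and estimate the aggregate via the controlling coordinate, which is where the factor $\sqrt K$ is introduced and where the simplex constraint together with the concavity of the square root enters. The main obstacle is precisely this quantitative bookkeeping: making the softmax modulus sharp enough and tracking the interplay of the $\tfrac15$ budget inherited from the preceding decomposition lemma, the logistic envelope, and the passage from the $K$ simplex coordinates to the distance $d_k$, so that the admissible net radius comes out as exactly $\frac{3\sqrt3\,\delta}{20\sqrt K}$ (the bound on $\eta$ being finite because $\delta\le\sqrt2$ and $K\ge1$) rather than as an unspecified absolute constant. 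Everything else is a routine net-to-bracket counting argument.
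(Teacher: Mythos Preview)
Your net-to-bracket skeleton is exactly right, and your multiplicative envelope $e^{-2\eta}\pi_{w^{(j)},k}\le \pi_{w,k}\le e^{2\eta}\pi_{w^{(j)},k}$ is a legitimate (and in fact sharper) construction. However, you have not identified where the constant $\tfrac{3\sqrt3}{20\sqrt K}$ actually comes from, and your paragraph about ``estimating the aggregate via the controlling coordinate, which is where the factor $\sqrt K$ is introduced'' is off: with your multiplicative bracket one has $\sum_k(\sqrt{\pi^{+}_k}-\sqrt{\pi^{-}_k})^2=(e^{\eta}-e^{-\eta})^2\sum_k\pi_{w^{(j)},k}=(e^{\eta}-e^{-\eta})^2$, so the $d_k$-diameter is $2\sinh\eta$ with \emph{no} $\sqrt K$ anywhere. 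That route proves a stronger inequality (roughly $H_{[\cdot],\sup_x d_k}(\delta/5,\mathcal P)\le H_{\max_k\|\cdot\|_\infty}(\mathrm{arcsinh}(\delta/10),W_K)$), but it does not reproduce the stated constant, and with $\eta=\tfrac{3\sqrt3\delta}{20\sqrt K}$ your bracket is too wide for small $K$ (e.g.\ $K=1$, $\delta=\sqrt2$ gives $2\sinh\eta\approx0.75>\delta/5$).

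The paper's proof takes a different, more elementary route that explains both the $3\sqrt3$ and the $\sqrt K$. Writing $A_k(u)=e^{u_k}/\sum_l e^{u_l}$, one differentiates $\sqrt{A_k}$ along the segment and uses $\partial_{u_l}\sqrt{A_k}=\tfrac12\sqrt{A_k}\,(\delta_{k,l}-A_l)$ together with $\sum_l|\delta_{k,l}-A_l|=2(1-A_k)$ to get
\[
\bigl|\sqrt{A_k(v)}-\sqrt{A_k(u)}\bigr|\le \|v-u\|_\infty\int_0^1 \sqrt{A_k}\,(1-A_k)\,dt\le \frac{2}{3\sqrt3}\,\|v-u\|_\infty,
\]
the last step being the maximum of $x\mapsto\sqrt x(1-x)$ on $[0,1]$, attained at $x=1/3$. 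From this \emph{additive} Lipschitz bound one builds brackets $[(\sqrt{\pi_{z,k}}-\tfrac{2}{3\sqrt3}\epsilon)^2,(\sqrt{\pi_{z,k}}+\tfrac{2}{3\sqrt3}\epsilon)^2]$; summing the $K$ squared increments gives $d_k\le \tfrac{4\epsilon}{3\sqrt3}\sqrt K$, whence $\epsilon=\tfrac{3\sqrt3\delta}{20\sqrt K}$ makes the bracket $\delta/5$-small. So the $\sqrt K$ is the price of using a uniform additive bound coordinate-wise, not anything about a ``controlling coordinate''. Your multiplicative approach avoids that loss altogether; it is a genuinely different and better argument, but you should carry it through to an explicit constant rather than trying to reverse-engineer the paper's $\sqrt K$.
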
 
Since $\mathcal{P}$ is a set of weights, $\frac{3\sqrt{3}\delta}{20\sqrt{K}}$ could be replaced by $\frac{3\sqrt{3}\delta}{20\sqrt{K-1}}$ with an identifiability condition. For example, $W'_K=\left\{(0,w_2-w_1,\ldots, w_K-w_1) |w \in W_K \right\}$ can be covered using brackets of null size on the first coordinate, lowering squared Hellinger distance between the brackets' bounds to a sum of $K-1$ terms. Therefore, $H_{[.],\underset{x}\sup \, d_k}\left(\frac{\delta}{5},\mathcal{P}\right)\leq
H_{\underset{k}\max \, \|\|_{\infty}}\left(\frac{3\sqrt{3}\delta}{20\sqrt{K-1}},W'_K \right)$.

 Since we have assumed that $\exists D_{W_K},C_W$ s.t $\forall \delta \in [0;\sqrt{2}]$,
\begin{align*}
H_{\underset{k}\max \, \|\|_{\infty}}\left(\delta,W_K \right)
&\leq D_{W_K} \left(C_W+\ln\left(\frac{1}{\delta}\right) \right)
\end{align*}
Then
\begin{align*}
H_{[.],\underset{x}\sup \, d_k}\left(\frac{\delta}{5},\mathcal{P}\right)&\leq
D_{W_K} \left(C_W+\ln\left(\frac{20\sqrt{K}}{3\sqrt{3}\delta}\right) \right)
\end{align*}

To tackle the Gaussian regression part, we rely heavily on 
the following proposition,
\begin{prop}
Let $\kappa\geq \frac{17}{29}$,
\( \displaystyle
  \gamma_\kappa  =
  \frac{25(\kappa-\frac{1}{2})}{49(1+\frac{2\kappa}{5})}
\). For any $0<\delta\leq\sqrt{2}$
 and any $\delta_\Sigma \leq
 \frac{1}{5\sqrt{\kappa^2 \cosh(\frac{2\kappa}{5}) +
    \frac{1}{2}}} \frac{\delta}{p}$, 
$(\upsilon,L,A,D) \in \Upsilon \times [L_-,L_+] \times \mathcal{A}(\lambda_-,\lambda_+) \times SO(p)$ and
$(\tilde{\upsilon},\tilde{L},\tilde{A},\tilde{D}) \in \Upsilon \times [L_-,L_+] \times \mathcal{A}(\lambda_-,+\infty) \times SO(p), \Sigma=LDAD'$ and  $\tilde{\Sigma}=\tilde{L}\tilde{D}\tilde{A}\tilde{D}'$, assume that
$t^-(x,y)=(1+\kappa \delta_\Sigma)^{-p} \Phi_{\tilde{\upsilon}(x),(1+\delta_\Sigma)^{-1}\tilde{\Sigma}}(y)$ and 
$t^+(x,y)=(1+\kappa \delta_\Sigma)^p
\Phi_{\tilde{\upsilon}(x),(1+\delta_\Sigma)\tilde{\Sigma}}(y)$.
   
If
\begin{align*}
\begin{cases}
\forall x \in \mathbb{R}^d, \|\upsilon(x)-\tilde{\upsilon}(x)\|^2 \leq p \gamma_{\kappa} L_- \lambda_-
  \frac{\lambda_-}{\lambda_+} {\delta_\Sigma}^2\\
(1+\frac{2}{25}\delta_\Sigma)^{-1} \tilde{L} \leq L \leq \tilde{L}\\
\forall 1\leq i \leq p, |A_{i,i}^{-1}-\tilde{A}_{i,i}^{-1}| \leq \frac{1}{10}
\frac{\delta_\Sigma}{\lambda_+} \\
 \forall y \in \mathbb{R}^p, \|Dy-\tilde{D}y\| \leq
\frac{1}{10} \frac{\lambda_-}{\lambda_+}
\delta_\Sigma \|y\|
\end{cases}
\end{align*}
then $[t^-,t^+]$ is a $\frac{\delta}{5}$ Hellinger bracket such that $t^-(x,y) \leq \Phi_{\upsilon(x),\Sigma}(y) \leq t^+(x,y)$.
\end{prop}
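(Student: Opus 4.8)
The plan is to establish the two assertions of the proposition separately: first that the pair $[t^-,t^+]$ has Hellinger width at most $\delta/5$, and then that it genuinely brackets $\Phi_{\upsilon(x),\Sigma}$. The structural observation that makes everything tractable is that $t^-$ and $t^+$ are \emph{scaled} Gaussians sharing the common mean $\tilde{\upsilon}(x)$, with covariances $(1+\delta_\Sigma)^{-1}\tilde{\Sigma}$ and $(1+\delta_\Sigma)\tilde{\Sigma}$ and with the $x$-independent multiplicative prefactors $(1+\kappa\delta_\Sigma)^{\mp p}$. Since the means coincide and the two covariances are scalar multiples of a common matrix, the width computation decouples completely from $x$, while the bracketing of $\Phi_{\upsilon,\Sigma}$ reduces to a pointwise log-ratio estimate.

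For the \textbf{width}, I would expand $d_y^2(t^-,t^+)=\int t^+ + \int t^- - 2\int\sqrt{t^+t^-}$. As $\Phi$ integrates to one, $\int t^{\pm}=(1+\kappa\delta_\Sigma)^{\pm p}$, and the prefactors cancel under the square root so that $\int\sqrt{t^+t^-}$ is exactly the Hellinger affinity between two Gaussians centred at $\tilde{\upsilon}$ with covariances $(1+\delta_\Sigma)^{\pm 1}\tilde{\Sigma}$. Using the closed form $|\Sigma_1|^{1/4}|\Sigma_2|^{1/4}/|(\Sigma_1+\Sigma_2)/2|^{1/2}$ for this affinity, the determinants of $\tilde{\Sigma}$ cancel and one obtains
\begin{align*}
d_y^2(t^-,t^+)=2\cosh\!\big(p\ln(1+\kappa\delta_\Sigma)\big)-2\big(\cosh\ln(1+\delta_\Sigma)\big)^{-p/2}.
\end{align*}
It then remains to bound this by $(\delta/5)^2$, which is a one-variable estimate. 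The hypothesis $\delta_\Sigma\le\frac{1}{5\sqrt{\kappa^2\cosh(2\kappa/5)+1/2}}\frac{\delta}{p}$ together with $\delta\le\sqrt2$ keeps $p\ln(1+\kappa\delta_\Sigma)$ below $2\kappa/5$, where the elementary bounds $2\cosh(p\ln(1+\kappa\delta_\Sigma))\le 2+p^2\kappa^2\delta_\Sigma^2\cosh(2\kappa/5)$ and $2(\cosh\ln(1+\delta_\Sigma))^{-p/2}\ge 2-\frac{p}{2}\delta_\Sigma^2$ apply; subtracting and using $p\ge1$ gives $d_y^2\le p^2\delta_\Sigma^2(\kappa^2\cosh(2\kappa/5)+\tfrac12)$, which is exactly why those two quantities appear in the threshold on $\delta_\Sigma$.

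For the \textbf{sandwiching}, I would pass to the log-ratio $\ln\big(\Phi_{\upsilon,\Sigma}/\Phi_{\tilde{\upsilon},(1+\delta_\Sigma)\tilde{\Sigma}}\big)$ and split it into a normalisation term $\frac{p}{2}\ln\frac{(1+\delta_\Sigma)\tilde{L}}{L}$ and a quadratic-form term; the volume conditions $(1+\frac{2}{25}\delta_\Sigma)^{-1}\tilde{L}\le L\le\tilde{L}$ bound the former. For the latter, the three perturbation hypotheses (on $L$, on the diagonal $A^{-1}_{i,i}$, and on the rotation through $\|Dy-\tilde{D}y\|$) are first assembled into a strict positive-definite domination, $(1+\delta_\Sigma)\tilde{\Sigma}\succeq(1+c\delta_\Sigma)\Sigma$ with a margin $c$ of order $\delta_\Sigma$. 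This makes $y\mapsto\frac{1}{2}(y-\tilde{\upsilon})'[(1+\delta_\Sigma)\tilde{\Sigma}]^{-1}(y-\tilde{\upsilon})-\frac{1}{2}(y-\upsilon)'\Sigma^{-1}(y-\upsilon)$ concave with finite maximum, and completing the square bounds that maximum by $\frac{1}{2c\delta_\Sigma}(\upsilon-\tilde{\upsilon})'\Sigma^{-1}(\upsilon-\tilde{\upsilon})\le\frac{\|\upsilon-\tilde{\upsilon}\|^2}{2c\delta_\Sigma L_-\lambda_-}$. The mean hypothesis $\|\upsilon-\tilde{\upsilon}\|^2\le p\gamma_\kappa L_-\lambda_-\frac{\lambda_-}{\lambda_+}\delta_\Sigma^2$ then collapses this to a quantity of order $p\gamma_\kappa\delta_\Sigma$, and $\gamma_\kappa$ is chosen precisely so that normalisation plus quadratic term is $\le p\ln(1+\kappa\delta_\Sigma)$, giving $\Phi_{\upsilon,\Sigma}\le t^+$. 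The lower bracket $t^-\le\Phi_{\upsilon,\Sigma}$ is obtained symmetrically, using the deflated covariance $(1+\delta_\Sigma)^{-1}\tilde{\Sigma}$ and the reverse domination.

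The \textbf{main obstacle} is the quadratic-form bound, for two linked reasons. First, one must extract a \emph{strict} domination margin of order $\delta_\Sigma$ out of perturbations that are themselves only of order $\delta_\Sigma$; the rotation bound enters at scale $\frac{\lambda_-}{\lambda_+}\delta_\Sigma$, which is exactly why the factor $\frac{\lambda_-}{\lambda_+}$ reappears in the mean hypothesis and ultimately cancels. Second, because the concave maximum blows up like $1/\delta_\Sigma$ as the margin shrinks, the mean perturbation must be of order $\delta_\Sigma^2$ rather than $\delta_\Sigma$ for the product to remain of order $\delta_\Sigma$, and matching this against $p\ln(1+\kappa\delta_\Sigma)$ is what forces the exact values of $\gamma_\kappa$, the $\frac{1}{10}$ and $\frac{2}{25}$ coefficients, and the threshold $\kappa\ge\frac{17}{29}$. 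By comparison, the width estimate is a routine scalar calculation once the Gaussian affinity has been identified.
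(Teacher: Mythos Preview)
Your proposal is correct and follows essentially the same route as the paper: the width is computed via the closed-form Gaussian Hellinger affinity and bounded through the two scalar estimates $2\cosh(p\ln(1+\kappa\delta_\Sigma))-2\le \kappa^2\cosh(2\kappa/5)p^2\delta_\Sigma^2$ and $2-2(\cosh\ln(1+\delta_\Sigma))^{-p/2}\le \tfrac{p}{2}\delta_\Sigma^2$, while the sandwiching is obtained by maximising the log-ratio (the paper invokes this as the Maugis--Michel lemma rather than completing the square) after establishing the eigenvalue margin. The only cosmetic difference is that the paper phrases the margin as an additive lower bound on $y'(\Sigma^{-1}-(1+\delta_\Sigma)^{-1}\tilde\Sigma^{-1})y$ and then factors $((1+\delta_\Sigma)\tilde\Sigma-\Sigma)^{-1}$ through that difference of inverses, whereas you package it as a multiplicative domination $(1+\delta_\Sigma)\tilde\Sigma\succeq(1+c\delta_\Sigma)\Sigma$; these are equivalent and lead to the same $\tfrac{5}{7}p\gamma_\kappa\delta_\Sigma$ (respectively $(1+\delta_\Sigma)p\gamma_\kappa\delta_\Sigma$) bound on the quadratic term.
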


We consider three cases:
the parameter (mean, volume, matrix) is known ($\star=0$), unknown but common to all classes ($\star=c$), unknown and possibly different for every class ($\star=K$). For example, $[\nu_K, L_0, D_c, A_0]$ denotes a model in which only means are free and eigenvector matrices are assumed to be equal and unknown.
Under our assumption that $D_{\Upsilon_K}, C_{\Upsilon}$ s.t $\forall \delta \in [0;\sqrt{2}]$,
\begin{align*}
H_{\max_k \sup_x \|.\|_2}(\delta,\Upsilon_K)&\leq D_{\Upsilon_K} \left(C_{\Upsilon}+\ln\left(\frac{1}{\delta}\right) \right)
\end{align*}
we deduce:
\begin{align}
\label{controleGaussienne}
H_{[.],\max_k \sup_x d_y}\left(\frac{\delta}{5},\mathcal{G}\right)\leq 
\mathcal{D} \left(\mathcal{C}+\ln\left(\frac{1}{\delta} \right) \right)
\end{align}
where $\displaystyle\mathcal{D}=Z_{\upsilon,\star}+Z_{L,\star}+\frac{p(p-1)}{2}Z_{D,\star}+(p-1)Z_{A,\star}$ and 
\begin{align*}
\mathcal{C}&=\ln\left(5p\sqrt{\kappa^2 \cosh \left(\frac{2\kappa}{5}\right)+\frac{1}{2}}\right)+\frac{Z_{\upsilon,\star}C_{\Upsilon}}{\mathcal{D}} 
+\frac{Z_{\upsilon,\star}}{2\mathcal{D}} \ln\left(\frac{\lambda_+}{p\gamma_\kappa L_- \lambda_-^2}\right)\\
&
+\frac{Z_{L,\star}}{\mathcal{D}} \ln\left(\frac{4+129\ln\left(\frac{L_+}{L_-}\right)}{10}\right)
+\frac{Z_{D,\star}}{\mathcal{D}} \left( \ln(c_U)+\frac{p(p-1)}{2} \ln\left(\frac{10\lambda_+}{\lambda_-} \right)\right)\\
&+\frac{Z_{A,\star}(p-1)}{\mathcal{D}}
\ln\left(\frac{4}{5}+\frac{52\lambda_+}{5\lambda_-}\ln\left(\frac{\lambda_+}{\lambda_-}\right)\right).\\
Z_{\upsilon,K}&=D_{\Upsilon_K}, Z_{\upsilon,c}=D_{\Upsilon_1}, Z_{\upsilon,0}=0\\
Z_{L,0}&=Z_{D,0}=Z_{A,0}=0,\\
Z_{L,c}&=Z_{D,c}=Z_{A,c}=1,\\
Z_{L,K}&=Z_{D,K}=Z_{A,K}=K
\end{align*}

We notice that the following upper-bound of $\mathcal{C}$ is independent from the model of the collection, because we have made this hypothesis on $C_\Upsilon$.
\begin{align*}
\mathcal{C}&\leq\ln\left(5p\sqrt{\kappa^2 \cosh \left(\frac{2\kappa}{5}\right)+\frac{1}{2}}\right)+C_{\Upsilon} 
+\frac{1}{2} \ln\left(\frac{\lambda_+}{p\gamma_\kappa L_- \lambda_-^2}\right)\\
&+\ln\left(\frac{4+129\ln\left(\frac{L_+}{L_-}\right)}{10}\right)
+\frac{2}{p(p-1)}\ln(c_U)+\ln\left(\frac{10\lambda_+}{\lambda_-} \right)\\
&+\ln\left(\frac{4}{5}+\frac{52\lambda_+}{5\lambda_-}\ln\left(\frac{\lambda_+}{\lambda_-}\right)\right):=\mathcal{C}_1.
\end{align*}

We conclude that $H_{[.],\sup_x d_y}\left(\delta,S_m\right)\leq
D_m \left(C_m+\ln\left(\frac{1}{\delta}\right) \right)$, with
\begin{align*}
D_m&=D_{W_K}+\mathcal{D}\\
C_m&=\frac{D_{W_K}}{D_m}\left(C_W+\ln\left(\frac{20\sqrt{K}}{3\sqrt{3}}\right)\right)
+\frac{\mathcal{D}\mathcal{C}_1}{D_m}\\
&\leq C_W+\ln\left(\frac{20\sqrt{K_{\max}}}{3\sqrt{3}}\right)+\mathcal{C}_1:=\mathfrak{C}
\end{align*}
Note that the constant $\mathfrak{C}$ does not depend on the dimension $D_m$ of the model, thanks to the hypothesis that $C_W$ is common for every model $S_m$ in the collection.
Using Proposition \ref{propsigma_m}, we deduce thus that
\begin{align*}
  n \sigma_m^2 \leq D_m \left(2 \left(\sqrt{\mathfrak{C}}+\sqrt{\pi}\right)^2+ \left(\ln\frac{n}{\left(\sqrt{\mathfrak{C}}+\sqrt{\pi}\right)^2 D_m}\right)_+ \right).
\end{align*}
Theorem~\ref{thm:general} yields then, for a collection
$\mathcal{S}=\left(S_m\right)_{m\in \mathcal{M}}$, with
$\mathcal{M}=\lbrace (K,W_K,\Upsilon_K,V_K)| K\in \mathbb{N}^*,
W_K,\Upsilon_K,V_K \mbox{ as previously defined } \rbrace$ for which
Assumption (K) holds, the oracle inequality of
Theorem~\ref{theo:selectdim} as soon as
\begin{align*}
\pen(m)\geq \kappa \left(D_m \left(2
    \left(\sqrt{\mathfrak{C}}+\sqrt{\pi}\right)^2+
    \left(\ln\frac{n}{\left(\sqrt{\mathfrak{C}}+\sqrt{\pi}\right)^2
        D_m}\right)_+ \right) +x_m\right).
\end{align*}

\section{Proof of Theorem for polynomial}
\label{appendiceB}
We focus here on the example in which $W_K$ and $\Upsilon_K$ are
polynomials of degree respectively at most $\degW$ and $\degUpsilon$.

By applying lemmas~\ref{entropieAppli}, \ref{controlePoids} and \ref{controleGaussienne}, we get:
\begin{coro}
\begin{align*}
H_{[.],\underset{x}\sup \, d_k}\left(\frac{\delta}{5},\mathcal{P}\right)
&
\leq (K-1)\binom{\degW+d}{d}
\\&\qquad\quad\times
\left[\ln\left(\sqrt{2}+\frac{20}{3\sqrt{3}}\maxaW\sqrt{K-1}\binom{\degW+d}{d}\right)
+\ln\left(\frac{1}{\delta}\right) \right].\\
&\leq (K-1)\binom{\degW+d}{d}
\\&\qquad\quad\times
\left[C_W+\ln\left(\frac{20}{3\sqrt{3}}\sqrt{K-1}\right)
+\ln\left(\frac{1}{\delta}\right) \right].\\
H_{[.],\underset{x} \sup  \, \underset{k} \max \,  d_y}\left(\frac{\delta}{5},\mathcal{G}\right)&\leq 
 \mathcal{D} \left(\mathcal{C}+\ln \left(\frac{1}{\delta}\right) \right)
\end{align*}
with
\begin{align*}
\mathcal{D}&=D_{\Upsilon^K}+K \frac{p(p+1)}{2}, D_{\Upsilon^K}=pK \binom{\degUpsilon+d}{d}\\
\mathcal{C}&=\frac{2}{2D_{\Upsilon^K}+Kp(p+1)} \left(D_{\Upsilon^K}C_{\Upsilon}
+\frac{D_{\Upsilon^K}}{2} \ln\left(\frac{25 p \lambda_+ \left(\kappa^2 \cosh \left(\frac{2\kappa}{5}\right)+\frac{1}{2}\right)}{\gamma_\kappa L_- \lambda_-^2}\right)\right.\\
&\left. +K
\left[\ln(c_U)
+\ln\left(\frac{4+129\ln\left(\frac{L_+}{L_-}\right)}{10}\right)
+\frac{p(p+1)}{2}\ln\left(5p\sqrt{\kappa^2 \cosh\left(\frac{2\kappa}{5}\right)+\frac{1}{2}}\right)
\right.\right.\\
&\left.\left.
+\frac{p(p-1)}{2}\ln\left(\frac{10\lambda_+}{\lambda_-}\right)
+(p-1)\ln\left(\frac{4}{5}+\frac{52\lambda_+}{5\lambda_-}\ln\left(\frac{\lambda_+}{\lambda_-}\right)\right) 
\right] \right).
\end{align*}
\end{coro}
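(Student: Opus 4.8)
The plan is to obtain this corollary by pure substitution: I would feed the polynomial-specific dimensions and constants supplied by Lemma~\ref{entropieAppli} into the two generic entropy estimates already available, namely Lemma~\ref{controlePoids} for the weight family $\mathcal{P}$ and the Gaussian bound~\eqref{controleGaussienne} for $\mathcal{G}$, then specialize the covariance structure to the free case ($\star=K$) and rearrange. No new analytic input is needed; the three cited results do all the work.

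For the weight part I would start from Lemma~\ref{controlePoids} in its identifiability-refined form,
\[
H_{[.],\underset{x}\sup \, d_k}\left(\tfrac{\delta}{5},\mathcal{P}\right)\leq H_{\underset{k}\max\|\|_\infty}\left(\tfrac{3\sqrt3\,\delta}{20\sqrt{K-1}},W'_K\right),
\]
and then insert the explicit polynomial covering estimate behind Lemma~\ref{entropieAppli}, with $\dim(W_K)=(K-1)\binom{\degW+d}{d}$ and $C_W=\ln\bigl(\sqrt2+\maxaW\binom{\degW+d}{d}\bigr)$. Substituting the rescaled radius $\sigma=\tfrac{3\sqrt3\,\delta}{20\sqrt{K-1}}$ produces the first displayed form directly; the second follows by pulling the rescaling factor $\tfrac{20}{3\sqrt3}\sqrt{K-1}\geq1$ out of the logarithm, which isolates the constant $C_W$ and the separate summand $\ln\tfrac{1}{\delta}$. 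This step is entirely mechanical.

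For the Gaussian part I would invoke~\eqref{controleGaussienne} and set $\star=K$, so that $Z_{\upsilon,K}=D_{\Upsilon^K}$ and $Z_{L,K}=Z_{D,K}=Z_{A,K}=K$. The complexity then collapses to
\[
\mathcal{D}=D_{\Upsilon^K}+K\Bigl(1+\tfrac{p(p-1)}{2}+(p-1)\Bigr)=D_{\Upsilon^K}+K\tfrac{p(p+1)}{2},
\]
with $D_{\Upsilon^K}=pK\binom{\degUpsilon+d}{d}$ and $C_\Upsilon=\ln\bigl(\sqrt2+\sqrt p\binom{\degUpsilon+d}{d}\maxaUpsilon\bigr)$ read off from Lemma~\ref{entropieAppli}.

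The only genuinely fiddly point, and hence the step I expect to require the most care, is the rewriting of $\mathcal{C}$. In~\eqref{controleGaussienne} the term $\ln\bigl(5p\sqrt{\kappa^2\cosh(\tfrac{2\kappa}{5})+\tfrac12}\bigr)$ appears as a standalone summand, whereas in the corollary it has been distributed over the common denominator $2\mathcal{D}=2D_{\Upsilon^K}+Kp(p+1)$. I would write this summand as $\tfrac{1}{\mathcal{D}}\bigl(D_{\Upsilon^K}+K\tfrac{p(p+1)}{2}\bigr)\ln(5p\sqrt{\cdots})$, send its $K\tfrac{p(p+1)}{2}$ share into the per-class bracket, and combine its $D_{\Upsilon^K}$ share with the mean term $\tfrac{D_{\Upsilon^K}}{2}\ln\bigl(\tfrac{\lambda_+}{p\gamma_\kappa L_-\lambda_-^2}\bigr)$. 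Here one uses $2\ln(5p\sqrt{X})+\ln\tfrac{\lambda_+}{p\gamma_\kappa L_-\lambda_-^2}=\ln\tfrac{25p^2X\lambda_+}{p\gamma_\kappa L_-\lambda_-^2}=\ln\tfrac{25pX\lambda_+}{\gamma_\kappa L_-\lambda_-^2}$ with $X=\kappa^2\cosh(\tfrac{2\kappa}{5})+\tfrac12$, the cancellation $p^2/p=p$ being exactly what yields the displayed mean contribution. Collecting the remaining volume, eigenvector and eigenvalue terms unchanged over $2\mathcal{D}$ then gives the stated expression for $\mathcal{C}$, so beyond this bookkeeping there is no conceptual obstacle.
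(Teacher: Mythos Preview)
Your proposal is correct and follows exactly the route the paper takes: the corollary is obtained by plugging the polynomial data of Lemma~\ref{entropieAppli} into Lemma~\ref{controlePoids} (in its $K-1$ refined form) and into the general Gaussian bound~\eqref{controleGaussienne} specialized to $\star=K$, followed by the algebraic redistribution of the standalone $\ln\bigl(5p\sqrt{\kappa^2\cosh(2\kappa/5)+1/2}\bigr)$ term that you describe. Your bookkeeping for $\mathcal{D}$ and for the merging of the mean contribution via $2\ln(5p\sqrt{X})+\ln\frac{\lambda_+}{p\gamma_\kappa L_-\lambda_-^2}=\ln\frac{25pX\lambda_+}{\gamma_\kappa L_-\lambda_-^2}$ is exactly what is needed; the paper itself merely cites the three lemmas without spelling this out.
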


Just like in the general case, we define $\mathcal{C}_1$ by:
\begin{align*}
\mathcal{C}_1&=C_{\Upsilon}
+\frac{1}{2} \ln\left(\frac{25 p \lambda_+ \left(\kappa^2 \cosh \left(\frac{2\kappa}{5}\right)+\frac{1}{2}\right)}{\gamma_\kappa L_- \lambda_-^2}\right)+\ln\left(5p\sqrt{\kappa^2 \cosh\left(\frac{2\kappa}{5}\right)+\frac{1}{2}}\right)\\
&+\frac{2}{p(p+1)}
\left(\ln(c_U)
+\ln\left(\frac{4+129\ln\left(\frac{L_+}{L_-}\right)}{10}\right)+(p-1)\ln\left(\frac{4}{5}+\frac{52\lambda_+}{5\lambda_-}\ln\left(\frac{\lambda_+}{\lambda_-}\right)\right)\right)
\\
&+\frac{p-1}{p+1}\ln\left(\frac{10\lambda_+}{\lambda_-}\right)
\end{align*}
and remind that $\mathfrak{C}=C_W+\ln\left(\frac{20\sqrt{K_{\max}-1}}{3\sqrt{3}}\right)+\mathcal{C}_1$ is an upper-bound for $C_m$.
We recall that $C_W=\ln\left(\sqrt{2}+\maxaW\binom{\degW+d}{d}\right)$ and $C_\Upsilon=\ln\left(\sqrt{2}+\sqrt{p}\maxaUpsilon\binom{\degUpsilon+d}{d}\right)$, and observe that $\mathfrak{C}$ does not depend on the model $S_m$ in the collection since $\mathfrak{C}$ only depends on $K_{\max},\maxaW,\degW,\maxaUpsilon, \degUpsilon, p, d, \kappa $ and the parameters defining $V_K$.
Then we can apply the result in the general case to the collection $(S_m)$ in which each model is defined by a number of components $K$, a common free structure on the covariance matrix $K$-tuple and a common maximal degree for the sets $W_K$ and $\Upsilon_K$.
$(x_m=K)_{m\in \mathcal{M}}$ satisfies Kraft inequality, since
$\sum_{m\in\mathcal{M}} e^{-x_m}\leq \frac{1}{e-1}$.
We obtain an oracle inequality with $\pen(m)=\kappa \left((C+ \ln(n)) \dim(S_m)+x_m \right)$, where $C=2(\sqrt{\mathfrak{C}}+\sqrt{\pi})^2$,  $\dim(S_m)=(K-1+Kp) \binom{\degW+d}{d}+Kp\frac{p+1}{2}$ and $x_m=K$ for the selection of the number of components in the mixture. 
If we change the structure $V_K$ over the covariance matrices, it only changes the constant $\Xi$ in Kraft inequality, since there a finite number of possible structures for a fixed $K$ and the sum $\sum_{m\in\mathcal{M}} e^{-x_m}$ can be rewritten $\sum_{K \in \mathbb{N}^*} \sum_{m \in \mathcal{M}|m(1)=K} e^{-x_m}$.

\section{Lemma Proofs}
\label{appendiceC}

In this section, we provide the proofs of the main lemmas used in the first appendix, to prove Theorem~\ref{theo:selectdim}.
It begins with bracketing entropy's decomposition, then we focus on the bracketing entropy of the weight's families in the general case and in our example, followed by the analysis of the bracketing entropy of Gaussian families.
\ifbool{Preprint}{}{For sake of brevity, some technical proofs are
  omitted  here and they can be found in an extended version.}

\subsection{Bracketing entropy's decomposition}
\label{decomp section}

\begin{lem}
\label{decomp lemma}
Let \begin{align*}
\mathcal{P}&=\left\{\pi=(\pi_k)_{1 \leq k \leq K}/\forall k, \pi_k:\mathcal{X} \rightarrow \mathbb{R}^+ \mbox{ and } \forall x\in \mathcal{X}, \sum_{k=1}^K \pi_k(x)=1\right\},\\
\Psi&=\left\{(\psi_1,\ldots,\psi_K)/\forall k, \psi_k:\mathcal{X} \times \mathcal{Y} \rightarrow \mathbb{R}^+,\mbox{ and }\forall x, \forall k, \int \psi_k(x,y) dy=1 \right\},\\
\mathcal{C}&=\left\{(x,y) \mapsto \sum_{k=1}^K \pi_k(x) \psi_k(x,y)/\pi \in \mathcal{P}, \psi \in \Psi \right\}.
\end{align*}
Then for all $\delta$ in $[0;\sqrt{2}]$,
\begin{align*}
H_{[.],\underset{x} \sup \, d_{y}}\left(\delta,\mathcal{C}\right) \leq H_{[.],\underset{x} \sup  \,  d_k}\left(\frac{\delta}{5},\mathcal{P}\right) + H_{[.],\underset{x} \sup  \, è\underset{k} \max \,  d_y}\left(\frac{\delta}{5},\Psi\right).
\end{align*}
\end{lem}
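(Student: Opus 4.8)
The plan is to build brackets for $\mathcal{C}$ as products of brackets for $\mathcal{P}$ and $\Psi$, and to show that the product construction inflates the scale by at most a factor $5$. Fix $\delta\in[0,\sqrt2]$ and suppose $\mathcal{P}$ is covered by brackets $[\pi^-,\pi^+]$ with $\sup_x d_k(\pi^-,\pi^+)\le\delta/5$ and $\Psi$ by brackets $[\psi^-,\psi^+]$ with $\sup_x\max_k d_y(\psi^-,\psi^+)\le\delta/5$. Given any $s=\sum_k\pi_k\psi_k\in\mathcal{C}$, I would pick brackets containing its factors and set $s^-=\sum_k\pi_k^-\psi_k^-$ and $s^+=\sum_k\pi_k^+\psi_k^+$. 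Since every $\pi_k^\pm$ and $\psi_k^\pm$ is nonnegative, $s^-\le s\le s^+$ pointwise, so $[s^-,s^+]$ is a genuine bracket for $s$; the number of such product brackets is at most the product of the two bracket counts, which after taking logarithms is exactly the claimed entropy inequality.

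The core is to bound $\sup_x d_y(s^-,s^+)$. At a fixed $x$, writing $a_k=\pi_k^+(x)\psi_k^+(x,y)$ and $b_k=\pi_k^-(x)\psi_k^-(x,y)$ and viewing $(\sqrt{a_k})_k,(\sqrt{b_k})_k$ as vectors of $\mathbb{R}_+^K$, the reverse triangle inequality for the Euclidean norm gives the pointwise bound $\big(\sqrt{s^+}-\sqrt{s^-}\big)^2\le\sum_k\big(\sqrt{a_k}-\sqrt{b_k}\big)^2$, reducing the problem to the components. For each $k$ I would split $\sqrt{\pi_k^+\psi_k^+}-\sqrt{\pi_k^-\psi_k^-}=\sqrt{\pi_k^+}\big(\sqrt{\psi_k^+}-\sqrt{\psi_k^-}\big)+\sqrt{\psi_k^-}\big(\sqrt{\pi_k^+}-\sqrt{\pi_k^-}\big)$ and apply $(u+v)^2\le(1+\theta)u^2+(1+\theta^{-1})v^2$. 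Integrating in $y$ then yields
\[
\int\big(\sqrt{a_k}-\sqrt{b_k}\big)^2\,dy\le(1+\theta)\,\pi_k^+(x)\,d_y^2(\psi_k^+,\psi_k^-)+(1+\theta^{-1})\big(\sqrt{\pi_k^+(x)}-\sqrt{\pi_k^-(x)}\big)^2\int\psi_k^-(x,y)\,dy.
\]

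To close, I would use the two structural facts coming from the definitions of $\mathcal{P}$ and $\Psi$: first $\int\psi_k^-(x,\cdot)\le\int\psi_k(x,\cdot)=1$, and second, since $\sum_k\pi_k=1$, the same Euclidean argument gives $\sqrt{\sum_k\pi_k^+(x)}\le\sqrt{\sum_k\pi_k^-(x)}+d_k(\pi^-,\pi^+)(x)\le 1+\delta/5$, hence $\sum_k\pi_k^+(x)\le(1+\delta/5)^2$. Summing the componentwise bound over $k$, controlling $d_y^2(\psi_k^+,\psi_k^-)$ by $(\delta/5)^2$ (through $\sup_x\max_k$) and $\sum_k(\sqrt{\pi_k^+}-\sqrt{\pi_k^-})^2$ by $(\delta/5)^2$ (through $\sup_x d_k$), and taking the supremum over $x$, I obtain
\[
\sup_x d_y^2(s^-,s^+)\le\Big[(1+\theta)\big(1+\tfrac{\delta}{5}\big)^2+(1+\theta^{-1})\Big]\big(\tfrac{\delta}{5}\big)^2.
\]
Since $\delta\le\sqrt2$ gives $(1+\delta/5)^2\le(1+\sqrt2/5)^2<1.65$, the choice $\theta=1$ makes the bracketed factor less than $25$, so $\sup_x d_y(s^-,s^+)\le\delta$, as required. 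The only delicate point is the bookkeeping that lets the factor $1/5$ absorb simultaneously the $(1+\delta/5)^2$ inflation from the unnormalized upper weights and the two cross terms produced by the split; the rest is the standard vector-norm reduction and Young's inequality.
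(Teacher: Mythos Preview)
Your argument is correct and follows essentially the same route as the paper: build product brackets, reduce $d_y$ on the mixture to a component-wise Hellinger distance via the (reverse) triangle inequality for the Euclidean norm, split each component as $\sqrt{\pi^+}(\sqrt{\psi^+}-\sqrt{\psi^-})+\sqrt{\psi^-}(\sqrt{\pi^+}-\sqrt{\pi^-})$, and close using $\int\psi_k^-\le 1$ together with a bound on $\sum_k\pi_k^+$. Two small remarks: first, bracket lower endpoints need not be nonnegative a priori, so the paper replaces $\pi_k^-,\psi_k^-$ by their positive parts before forming the product---you should do the same (this neither enlarges the bracket nor the Hellinger size); second, your triangle-inequality bound $\sum_k\pi_k^+\le(1+\delta/5)^2$ is a cleaner variant of the paper's Lemma~13 estimate $\sum_k\pi_k^+\le 1+2(\sqrt2+\sqrt3)\,\delta/5$, and your use of Young's inequality with $\theta=1$ plays the role of the paper's Cauchy--Schwarz expansion (its Lemma~11), leading to the same numerical conclusion that the inflation factor stays below $25$.
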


The proof mimics the one of Lemma 7 from \cite{ConditionalDensity}.

\begin{proof}
First we will exhibit a covering of bracket of $\mathcal{C}$.

Let $([\pi^{i,-}, \pi^{i,+}])_{1\leq i\leq N_{\mathcal{P}}}$ be a minimal
    covering of $\delta$
    bracket for $\underset{x} \sup  \,  d_k$ of $\mathcal{P}$:
\[
   \forall i \in \{1,\ldots,N_{\mathcal{P}}\},
    \forall x \in \mathcal{X}, 
     d_k(\pi^{i,-}(x) ,\pi^{i,+}(x))\leq \delta.
\]
Let $([\psi^{i,-},\psi^{i,+}])_{1\leq i \leq N_{\Psi}}$ be a minimal covering of $\delta$ bracket for $\underset{x} \sup  \, \underset{k} \max \,d_y$ of $\Psi$:
 $\forall i \in \{1,\ldots,N_{\Psi}\},
 \forall x \in \mathcal{X},
\forall k \in \{1,\ldots,K\},$
 $d_y(\psi_k^{i,-}(x,.) ,\psi_k^{i,+}(x,.))\leq \delta$.
Let $s$ be a density in $\mathcal{C}$. By definition, there is $\pi$ in $\mathcal{P}$ and $\psi$ in $\Psi$
such that for all $(x,y)$ in $\mathcal{X} \times \mathcal{Y},
s(y|x)=\sum_{k=1}^K \pi_k(x) \psi_k(x,y)$.
 
Due to the covering, there is $i$ in $\{1,\ldots,N_{\mathcal{P}}\}$
such that 
\[
\forall x \in \mathcal{X},
\forall k \in \{1,\ldots,K\},
\pi_k^{i,-}(x) \leq \pi_k(x) \leq \pi_k^{i,+}(x).
\]
There is also $j$ in $\{1,\ldots,N_{\Psi}\}$ such that 
\[
\forall x \in \mathcal{X},
\forall k \in \{1,\ldots,K\},
\forall y \in \mathcal{Y},
\psi_k^{j,-}(x,y) \leq \psi_k(x,y) \leq \psi_k^{j,+}(x,y).
\]

Since for all $x$, for all $k$ and for all $y$, $\pi_k(x)$ and $\psi_k(x,y)$ are non-negatives, we may multiply term-by-term and sum these inequalities over $k$ to obtain:
\[
\forall x \in \mathcal{X},
\forall y \in \mathcal{Y},
\sum_{k=1}^K \left(\pi_k^{i,-}(x)\right)_+
\left(\psi_k^{j,-}(x,y)\right)_+ \leq s(y|x) \leq \sum_{k=1}^K
\pi_k^{i,+}(x) \psi_k^{j,+}(x,y).
\]
$\displaystyle 
 \left(\left[\sum_{k=1}^K \left(\pi_k^{i,-}\right)_+
    \left(\psi_k^{j,-}\right)_+, \sum_{k=1}^K \pi_k^{i,+}
    \psi_k^{j,+}\right]\right)_{\substack{1\leq i\leq
    N_{\mathcal{P}}\\1\leq j \leq N_{\Psi}}}
$
 is thus a bracket covering of
$\mathcal{C}$.

Now, we focus on brackets' size using lemmas from
\cite{ConditionalDensity} (namely Lemma 11, 12, 13), 
To lighten the notations, $\pi_k^-$ and $\psi_k^-$ are supposed non-negatives for all $k$.
Following their Lemma 12, only using Cauchy-Schwarz inequality, we prove that
\begin{multline*}
\underset{x} \sup \, d_y^2\left(\sum_{k=1}^K \pi_k^-(x) \psi_k^-(x,.),\sum_{k=1}^K \pi_k^+(x) \psi_k^+(x,.)\right)\\
\leq \underset{x} \sup \, d_{y,k}^2(\pi^-(x) \psi^-(x,.), \pi^+(x) \psi^+(x,.))
\end{multline*}
Then, using Cauchy-Schwarz inequality again, we get by their Lemma 11:
\begin{multline*}
 \sup_x d_{y,k}^2(\pi^-(x) \psi^-(x,.), \pi^+(x) \psi^+(x,.))\\
\leq \sup_x \left( \max_k d_y(\psi_k^+(x,.),\psi_k^-(x,.)) \sqrt{\sum_{k=1}^K \pi_k^+(x)}\right.\\
 \left.+d_k(\pi^+(x),\pi^-(x)) \max_k \sqrt{\int \psi_k^-(x,y) dy} \right)^2
\end{multline*}
According to their Lemma 13, $\forall x, \sum_{k=1}^K \pi_k^+(x)\leq 1+2(\sqrt{2}+\sqrt{3})\delta$.
\begin{align*}
&\sup_x \left( \max_k d_y(\psi_k^+(x,.),\psi_k^-(x,.)) \sqrt{\sum_{k=1}^K \pi_k^+(x)}\right. \\
&\qquad \qquad \qquad \qquad \qquad \ \left.+d_k(\pi^+(x),\pi^-(x)) \max_k \sqrt{\int \psi_k^-(x,y) dy} \right)^2\\
& \leq \left( \sqrt{1+2(\sqrt{2}+\sqrt{3})\delta}+ 1 \right)^2 {\delta}^2\\
& \leq (5\delta)^2
\end{align*}
The result follows from the fact we exhibited a $5\delta$ covering of brackets of $\mathcal{C}$, with cardinality $ N_{\mathcal{P}}N_{\Psi}$.
\end{proof}

\subsection{Bracketing entropy of weight's families}
\label{weight}

\subsubsection{When $W_K$ is a compact}
We demonstrate that for any $\delta \in [0;\sqrt{2}]$,
\begin{align*}
H_{[.],\underset{x}\sup \, d_k}\left(\frac{\delta}{5},\mathcal{P}\right)&\leq
H_{\underset{k}\max \, \|\|_{\infty}}\left(\frac{3\sqrt{3}\delta}{20\sqrt{K}},W_K \right)
\end{align*} 

\begin{proof}
We show that
$\forall (w,z) \in (W_K)^2, \forall k \in \{1,\ldots,K\}, \forall x \in \mathcal{X},
 |\sqrt{\pi_{w,k}(x)}-\sqrt{\pi_{z,k}(x)}| \leq F(k,x) d(w,z)$, with $F$ a function and $d$ some distance.
We define $\forall k, \forall u \in \mathbb{R}^K,
A_k(u)=\frac{\exp(u_k)}{\sum_{k=1}^K \exp(u_k)}$, so
$\pi_{w,k}(x)=A_k(w(x))$.

$\forall (u,v) \in (\mathbb{R}^K)^2$,
\begin{align*}
\left|\sqrt{A_k(v)}-\sqrt{A_k(u)}\right|&=\left|\int_0^1 \nabla\left(\sqrt{A_k}\right)(u+t(v-u)).(v-u) dt \right|
\end{align*}
Besides,
\begin{align*}
\nabla\left(\sqrt{A_k}\right)(u)
&=\left(\frac{1}{2}\sqrt{A_k(u)} \frac{\partial}{\partial u_l}(\ln(A_k(u)))\right)_{1\leq l \leq K}\\
&=\left(\frac{1}{2}\sqrt{A_k(u)} \left(\delta_{k,l}-A_l(u)\right)\right)_{1\leq l \leq K}
\end{align*}
\begin{align*}
\left|\sqrt{A_k(v)}-\sqrt{A_k(u)}\right| \mspace{-75mu}\\
&=\frac{1}{2}\left|\int_0^1 \sqrt{A_k(u+t(v-u))} \sum_{l=1}^K \left(\delta_{k,l}-A_l(u+t(v-u))\right)(v_l-u_l) dt \right|\\
&\leq \frac{1}{2} \int_0^1 \sqrt{A_k(u+t(v-u))} \sum_{l=1}^K \left|\delta_{k,l}-A_l(u+t(v-u))\right| \left|(v_l-u_l)\right| dt\\
&\leq \frac{\|v-u\|_{\infty}}{2} \int_0^1 \sqrt{A_k(u+t(v-u))} \sum_{l=1}^K \left|\delta_{k,l}-A_l(u+t(v-u))\right| dt 
\end{align*}
Since $\forall u \in \mathbb{R}^K, \sum_{k=1}^K A_k(u)=1$, $\sum_{l=1}^K |\delta_{k,l}-A_l(u)|=2(1-A_k(u))$
\begin{align*}
\left|\sqrt{A_k(v)}-\sqrt{A_k(u)}\right| &\leq \|v-u\|_{\infty} \int_0^1 \sqrt{A_k(u+t(v-u))}\left(1-A_k(u+t(v-u))\right) dt\\
&\leq \frac{2}{3\sqrt{3}}\|v-u\|_{\infty}
\end{align*}
since $x \mapsto \sqrt{x}(1-x)$ is maximal over [0;1] for $x=\frac{1}{3}$.
We deduce that for any $(w,z)$ in $(W_K)^2$, for all $k$ in $\{1,\ldots,K\}$, for any $x$ in $\mathcal{X}$,
 $|\sqrt{\pi_{w,k}(x)}-\sqrt{\pi_{z,k}(x)}|\leq \frac{2}{3\sqrt{3}} \max_l \|w_l-z_l\|_{\infty}$.
 
By hypothesis, for any positive $\epsilon$, an $\epsilon$-net $\mathcal{N}$ of $W_K$ may be exhibited. Let $w$ be an element of $W_K$. There is a $z$ belonging to the $\epsilon$-net $\mathcal{N}$ such that $\max_l \|z_l-w_l\|_{\infty}\leq \epsilon$.
Since for all $k$ in $\{1,\ldots,K\}$, for any $x$ in $\mathcal{X}$,
\begin{align*}
|\sqrt{\pi_{w,k}(x)}-\sqrt{\pi_{z,k}(x)}|\leq \frac{2}{3\sqrt{3}} \max_l \|w_l-z_l\|_{\infty}\leq \frac{2}{3\sqrt{3}} \epsilon,
\end{align*}
and
\begin{align*}
\sum_{k=1}^K \left(\sqrt{\pi_{z,k}(x)}+\frac{2}{3\sqrt{3}} \epsilon-\sqrt{\pi_{z,k}(x)}+\frac{2}{3\sqrt{3}} \epsilon \right)^2=K\left(\frac{4\epsilon}{3\sqrt{3}}\right)^2,
\end{align*} 
 $\left(\left[\left(\sqrt{\pi_{z}}-\frac{2}{3\sqrt{3}} \epsilon \right)^2;\left( \sqrt{\pi_{z}}+\frac{2}{3\sqrt{3}} \epsilon\right)^2 \right]\right)_{z \in \mathcal{N}}$ is a $\frac{4\epsilon \sqrt{K}}{3\sqrt{3}}$-bracketing cover of $\mathcal{P}$.
As a result, $H_{[],\sup_x d_k}\left(\frac{\delta}{5},\mathcal{P}\right)\leq H_{\max_k \|\|_{\infty}}\left(\frac{3\sqrt{3}}{20\sqrt{K}}\delta,W_K\right)$. 
\end{proof}

\subsubsection{When $W_K=\{0\}\otimes W^{K-1}$ with $W$ a set of polynomials}
We remind that 
\begin{align*}
W=\left\{w:[0;1]^d \rightarrow \mathbb{R}/ w(x)=\sum_{|r|=0}^{\degW} \alpha_r x^r \mbox{ and } \|\alpha\|_{\infty}\leq \maxaW\right\}
\end{align*}
\begin{prop}
For all $\delta \in [0;\sqrt{2}]$,
\begin{align*}
H_{[.],\underset{x}\sup \, d_k}\left(\frac{\delta}{5},\mathcal{P}\right)&\leq
(K-1) \binom{\degW+d}{d}
\\&\qquad\quad\times
\left(\ln\left(\sqrt{2}+\frac{20}{3\sqrt{3}}\maxaW\sqrt{K-1}\binom{\degW+d}{d}\right)
+\ln\left(\frac{1}{\delta}\right) \right).
\end{align*} 
\end{prop}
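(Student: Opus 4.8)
The plan is to reduce, via Lemma~\ref{controlePoids} in its sharpened form, the bracketing entropy of $\mathcal{P}$ to a \emph{covering} entropy of $W_K$ in the $\underset{k}\max\,\|\cdot\|_\infty$ metric (recall that the proof of Lemma~\ref{controlePoids} builds brackets of $\mathcal{P}$ from an $\epsilon$-net of $W_K$, so a net is what we must count), and then to compute that covering entropy explicitly by passing from polynomials to their coefficient vectors. Since here $W_K=\{0\}\times W^{K-1}$ already freezes the first logistic weight at $0$, it coincides with the set $W'_K$ of the identifiability remark, so I would use the refined inequality
\[
H_{[.],\underset{x}\sup\,d_k}\left(\frac{\delta}{5},\mathcal{P}\right) \leq H_{\underset{k}\max\,\|\cdot\|_\infty}\left(\frac{3\sqrt{3}\delta}{20\sqrt{K-1}},W_K\right),
\]
and it then remains to bound the right-hand side at scale $\sigma := \tfrac{3\sqrt{3}\delta}{20\sqrt{K-1}}$.

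Write $N := \binom{\degW+d}{d}$ for the number of monomials of degree at most $\degW$ in $d$ variables, and identify each $w\in W$ with its coefficient vector $\alpha\in[-\maxaW,\maxaW]^N$. The key elementary estimate is that on $[0,1]^d$ every monomial obeys $|x^r|\le 1$, so that for any two polynomials $\|w_\alpha-w_\beta\|_\infty \le \sum_{|r|=0}^{\degW}|\alpha_r-\beta_r|\le N\,\|\alpha-\beta\|_\infty$. Hence, to force $\underset{k}\max\,\|w_k-z_k\|_\infty\le\sigma$ it suffices to cover the coefficient cube $[-\maxaW,\maxaW]^{N(K-1)}$ (the $K-1$ free coordinates) in $\ell_\infty$ at radius $\sigma/N$. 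A routine cube-covering count yields a net of cardinality at most $\bigl(1+\tfrac{\maxaW N}{\sigma}\bigr)^{N(K-1)}$, giving
\[
H_{\underset{k}\max\,\|\cdot\|_\infty}(\sigma,W_K) \le N(K-1)\,\ln\!\left(1+\frac{\maxaW N}{\sigma}\right).
\]

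Finally I would substitute $\sigma=\tfrac{3\sqrt{3}\delta}{20\sqrt{K-1}}$, so that the argument of the logarithm becomes $1+\tfrac{20\,\maxaW N\sqrt{K-1}}{3\sqrt3\,\delta}$, and then split off $\ln\tfrac1\delta$. The step deserving the most care is absorbing the additive $1$ into the stated $\sqrt2$: using the hypothesis $\delta\le\sqrt2$, one has $1\le \sqrt2/\delta$, whence
\[
1+\frac{20\,\maxaW N\sqrt{K-1}}{3\sqrt3\,\delta} \le \frac{1}{\delta}\left(\sqrt2+\frac{20}{3\sqrt3}\maxaW\sqrt{K-1}\,N\right),
\]
which converts the bound into exactly the claimed $(K-1)\binom{\degW+d}{d}\bigl(\ln(\sqrt2+\tfrac{20}{3\sqrt3}\maxaW\sqrt{K-1}\binom{\degW+d}{d})+\ln\tfrac1\delta\bigr)$. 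There is no real difficulty beyond bookkeeping: the monomial estimate and the cube count are elementary, and the only thing to watch is that the factor $N$ plays two distinct roles — it sets the ambient dimension $N(K-1)$ of the cube (producing the prefactor) and it shrinks the covering radius to $\sigma/N$ (producing the single $N$ inside the logarithm) — both traceable to the same monomial count.
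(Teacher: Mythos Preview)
Your proposal is correct and follows essentially the same route as the paper: invoke the sharpened Lemma~\ref{controlePoids} with $\sqrt{K-1}$ (valid because $W_K=\{0\}\times W^{K-1}$), pass from polynomials to coefficient vectors via the monomial bound $\|w_\alpha-w_\beta\|_\infty\le N\|\alpha-\beta\|_\infty$, count a cube covering in $\ell_\infty$, and finally absorb the additive $1$ using $\delta\le\sqrt{2}$. The paper's proof carries out exactly these steps with the same inequalities and constants.
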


\begin{proof}
$W_K$ is a finite dimensional compact set. Thanks to the result in the general case, we get
\begin{align*}
H_{[.],\underset{x}\sup \, d_k}\left(\frac{\delta}{5},\mathcal{P}\right)&\leq
H_{\underset{k}\max \, \|\|_{\infty}}\left(\frac{3\sqrt{3}\delta}{20\sqrt{K-1}},W_K \right)\\
&\leq H_{\|.\|_{\infty}}\left(\frac{3\sqrt{3}\delta}{20\sqrt{K-1}\binom{\degW+d}{d}},\left\{\alpha \in \mathbb{R}^{(K-1)\binom{\degW+d}{d}}/\|\alpha\|_{\infty}\leq \maxaW\right\}\right)\\
&\leq (K-1)\binom{\degW+d}{d} \ln \left(1+\frac{20\sqrt{K-1}\maxaW\binom{\degW+d}{d}}{3\sqrt{3}\delta} \right)\\
&\leq (K-1)\binom{\degW+d}{d}
\\&\qquad\quad\times
\left[\ln\left(\sqrt{2}+\frac{20}{3\sqrt{3}}\maxaW\sqrt{K-1}\binom{\degW+d}{d}\right)
+\ln\left(\frac{1}{\delta}\right) \right]
 \end{align*}
The second inequality comes from: for all $w,v$ in $W_K$,\\ $\max_k \|w_k-v_k\|_\infty \leq \max_k \sum_{|r|=0}^{\degW} |\alpha_{k,r}-\beta_{k,r}|
\leq \binom{\degW+d}{d}\max_{k,r}|\alpha_{k,r}-\beta_{k,r}|$.
 \end{proof}

\subsection{Bracketing entropy of Gaussian families}
\label{gauss fam}
\subsubsection{General case}
We rely on a general construction of Gaussian brackets:
\begin{prop}
\label{gauss}
Let $\kappa\geq\frac{17}{29}$,
\( \displaystyle
  \gamma_\kappa  =
  \frac{25(\kappa-\frac{1}{2})}{49(1+\frac{2\kappa}{5})}\).
For any $0<\delta\leq\sqrt{2}$, any $p\geq 1$ 
 and any $\delta_\Sigma \leq
\frac{1}{5\sqrt{\kappa^2 \cosh(\frac{2\kappa}{5}) +
    \frac{1}{2}}}\frac{\delta}{p}$, 

let $(\upsilon,L,A,D) \in \Upsilon \times  [L_-,L_+] \times 
\mathcal{A}(\lambda_-,\lambda_+) \times SO(p)$ and
$(\tilde{\upsilon}, \tilde{L},\tilde{A},\tilde{D})\in \Upsilon \times [L_-,L_+] \times 
\mathcal{A}(\lambda_-,+\infty) \times SO(p)$, define
$\Sigma=L D A D'$ and  $\tilde{\Sigma}=\tilde{L} \tilde{D}
\tilde{A} \tilde{D}'$,
\begin{align*}
  t^-(x,y) = (1+\kappa\delta_\Sigma)^{-p} \Phi_{\tilde{\upsilon}(x),(1+\delta_\Sigma)^{-1} \tilde{\Sigma}}(y)
\quad\text{and}\quad
  t^+(x,y) = (1+\kappa\delta_\Sigma)^{p} \Phi_{\tilde{\upsilon}(x),(1+\delta_\Sigma) \tilde{\Sigma}}(y).
\end{align*}
If
\begin{align*}
\begin{cases}
\forall x \in \mathcal{X}, \|\upsilon(x)-\tilde{\upsilon}(x)\|^2 \leq p \gamma_\kappa L_- \lambda_-
  \frac{\lambda_-}{\lambda_+} \delta_\Sigma^2\\
\left(1+\frac{2}{25}\delta_\Sigma \right)^{-1} \tilde{L} \leq L \leq \tilde{L}\\
\forall 1\leq i \leq p,|A_{i,i}^{-1}-\tilde{A}_{i,i}^{-1}| \leq \frac{1}{10}
\frac{\delta_\Sigma}{\lambda_+}\\
 \forall y \in \mathbb{R}^p,\|Dy-\tilde{D}y\| \leq
\frac{1}{10} \frac{\lambda_-}{\lambda_+}
\delta_\Sigma \|y\|
\end{cases}
\end{align*}
then $[t^-,t^+]$ is a $\delta/5$ Hellinger bracket such that
\(\displaystyle
t^-(x,y) \leq \Phi_{\upsilon(x),\Sigma}(y) \leq t^+(x,y).
\)
\end{prop}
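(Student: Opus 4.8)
The plan is to prove the two halves of the statement separately: the pointwise sandwiching $t^-(x,y)\le \Phi_{\upsilon(x),\Sigma}(y)\le t^+(x,y)$, and then the control of the bracket size $d_y(t^-,t^+)\le\delta/5$ uniformly in $x$. The whole difficulty lies in the first half; the second is a closed-form computation followed by a one-variable estimate.

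For the sandwiching I would pass to logarithms and compare, for the upper bound, $\ln\Phi_{\upsilon(x),\Sigma}(y)$ with $\ln\bigl((1+\kappa\delta_\Sigma)^p\Phi_{\tilde\upsilon(x),(1+\delta_\Sigma)\tilde\Sigma}(y)\bigr)$ (and symmetrically for $t^-$). The difference splits into a constant part coming from the normalizing factors $|\Sigma|^{-1/2}$ versus $(1+\delta_\Sigma)^{-p/2}|\tilde\Sigma|^{-1/2}$ together with $(1+\kappa\delta_\Sigma)^{\pm p}$, and a quadratic-in-$y$ part comparing $(y-\upsilon)'\Sigma^{-1}(y-\upsilon)$ with $(y-\tilde\upsilon)'[(1+\delta_\Sigma)\tilde\Sigma]^{-1}(y-\tilde\upsilon)$. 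The strategy is to introduce intermediate Gaussians changing one parameter at a time --- mean $\upsilon\to\tilde\upsilon$, rotation $D\to\tilde D$, normalized eigenvalues $A\to\tilde A$, and volume $L\to\tilde L$ --- using the decomposition $\Sigma^{-1}=L^{-1}DA^{-1}D'$ and the fact that the eigenvalues of $\Sigma$ lie in $[L_-\lambda_-,L_+\lambda_+]$. Each of the four hypotheses is calibrated so that the associated perturbation of the precision matrix and of $|\Sigma|$ is absorbed by a fraction of the covariance inflation $\delta_\Sigma$ and of the prefactor exponent $\kappa$; the condition $\|\upsilon-\tilde\upsilon\|^2\le p\gamma_\kappa L_-\lambda_-\tfrac{\lambda_-}{\lambda_+}\delta_\Sigma^2$ handles the recentering. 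The thresholds $\kappa\ge 17/29$ and $\gamma_\kappa=\tfrac{25(\kappa-1/2)}{49(1+2\kappa/5)}$ are tuned precisely so that the accumulated bounds close to give the two inequalities uniformly in $y$.

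For the bracket size I would use that $t^-$ and $t^+$ are scalar multiples of Gaussians sharing the common mean $\tilde\upsilon(x)$, with covariances $(1+\delta_\Sigma)^{-1}\tilde\Sigma$ and $(1+\delta_\Sigma)\tilde\Sigma$. Writing $d_y^2(t^-,t^+)=\int t^+\,dy+\int t^-\,dy-2\int\sqrt{t^+t^-}\,dy$, the first two integrals equal $(1+\kappa\delta_\Sigma)^p$ and $(1+\kappa\delta_\Sigma)^{-p}$, while the last is the Bhattacharyya affinity between the two Gaussians, which for equal means reduces to $\left(\tfrac{(1+\delta_\Sigma)+(1+\delta_\Sigma)^{-1}}{2}\right)^{-p/2}$. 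All three terms are independent of $x$, so the supremum over $x$ is immediate. It then remains a calculus estimate: using $\sinh u\le u\cosh u$ on the range $p\kappa\delta_\Sigma\le 2\kappa/5$ (which follows from $\delta\le\sqrt2$ and $p\ge1$) gives $(1+\kappa\delta_\Sigma)^p+(1+\kappa\delta_\Sigma)^{-p}-2\le (p\kappa\delta_\Sigma)^2\cosh(2\kappa/5)$, and $1-e^{-x}\le x$ gives $2-2\left(\tfrac{(1+\delta_\Sigma)+(1+\delta_\Sigma)^{-1}}{2}\right)^{-p/2}\le \tfrac{p}{2}\delta_\Sigma^2$. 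Hence $d_y^2(t^-,t^+)\le p\delta_\Sigma^2\bigl(p\kappa^2\cosh(2\kappa/5)+\tfrac12\bigr)\le p^2\delta_\Sigma^2\bigl(\kappa^2\cosh(2\kappa/5)+\tfrac12\bigr)$ since $p\ge1$, which is exactly $\le(\delta/5)^2$ under the assumed bound on $\delta_\Sigma$.

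I expect the main obstacle to be the sandwiching step, and within it the rotation mismatch $D\ne\tilde D$: unlike the volume and eigenvalue perturbations, a rotation error couples with the anisotropy of $A$, so its effect on the quadratic form is governed by the ratio $\lambda_-/\lambda_+$ --- which is precisely why the hypotheses on $D$ and on $A^{-1}$ carry the factors $\tfrac{\lambda_-}{\lambda_+}$ and $\tfrac{1}{\lambda_+}$. Keeping each perturbation below the right fraction of $\delta_\Sigma$ (the explicit $\tfrac{1}{10}$ thresholds) so that their sum still fits under $(1+\kappa\delta_\Sigma)^{\pm p}$ is the delicate bookkeeping; by contrast the Hellinger computation is essentially routine once the affinity is put in closed form.
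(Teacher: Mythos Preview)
Your plan is correct and tracks the paper's proof closely. The Hellinger size computation is identical: the paper's Lemmas~15--17 give exactly your closed-form affinity $\left(\tfrac{(1+\delta_\Sigma)+(1+\delta_\Sigma)^{-1}}{2}\right)^{-p/2}$ and your two calculus bounds, combined the same way.

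For the sandwiching, the paper does not chain intermediate Gaussians one parameter at a time. Instead it applies in one stroke a lemma of Maugis--Michel --- if $\Sigma_1^{-1}-\Sigma_2^{-1}$ is positive definite then $\Phi_{\upsilon_1,\Sigma_1}/\Phi_{\upsilon_2,\Sigma_2}\le\sqrt{|\Sigma_2|/|\Sigma_1|}\exp\bigl(\tfrac12(\upsilon_1-\upsilon_2)'(\Sigma_2-\Sigma_1)^{-1}(\upsilon_1-\upsilon_2)\bigr)$ --- with $\Sigma_2=(1+\delta_\Sigma)\tilde\Sigma$, and then establishes the required positive-definiteness of $\Sigma^{-1}-(1+\delta_\Sigma)^{-1}\tilde\Sigma^{-1}$ and of $(1+\delta_\Sigma)\tilde\Sigma^{-1}-\Sigma^{-1}$ via a $D$/$A$/$L$ decomposition (their Lemma~18, which is precisely your parameter-by-parameter bookkeeping on the precision matrix). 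The determinant ratio and the quadratic in $\upsilon-\tilde\upsilon$ are then closed by the one-variable estimates you allude to.

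One caution about your phrasing: the mean step $\upsilon\to\tilde\upsilon$ cannot be taken as a standalone intermediate Gaussian with fixed covariance, since the log-ratio then contains an unbounded linear-in-$y$ term. It must be coupled with the covariance inflation so that completing the square yields a uniform bound --- exactly what the Maugis--Michel lemma packages. Your remark that the recentering is ``absorbed by a fraction of the covariance inflation'' suggests you already see this, but the order you list ($\upsilon$ first, then $D,A,L$) would fail if executed literally.
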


Admitting this proposition, we are brought to construct nets over the spaces of the means, the volumes, the eigenvector matrices and the normalized eigenvalue matrices. We consider three cases:
the parameter (mean, volume, matrix) is known ($\star=0$), unknown but common to all classes ($\star=c$), unknown and possibly different for every class ($\star=K$). For example, $[\nu_K, L_0, D_c, A_0]$ denotes a model in which only means are free and eigenvector matrices are assumed to be equal and unknown.

If the means are free ($\star=K$), we construct a grid $G_{\Upsilon_K}$ over $\Upsilon_K$, which is compact. Since
\begin{align*}
&H_{\max_k \sup_x \|.\|_2}\left(\sqrt{p \gamma_\kappa L_- \lambda_-
  \frac{\lambda_-}{\lambda_+}} \delta_\Sigma,\Upsilon_K\right) \leq
D_{\Upsilon_K} \left(C_{\Upsilon}+\ln \left(\frac{1}{\sqrt{p \gamma_\kappa L_- \lambda_-
  \frac{\lambda_-}{\lambda_+}} \delta_\Sigma} \right) \right),\\
 &\left|G_{\Upsilon_K}\left(\sqrt{p \gamma_\kappa L_- \lambda_-
  \frac{\lambda_-}{\lambda_+}} \delta_\Sigma\right) \right|\leq \left(C_{\Upsilon}+\ln \left(\frac{1}{\sqrt{p \gamma_\kappa L_- \lambda_-
  \frac{\lambda_-}{\lambda_+}} \delta_\Sigma} \right) \right)^{D_{\Upsilon_K}}.
\end{align*}
If the means are common and unknown ($\star=c$), belonging to $\Upsilon_1$ , we construct a grid $G_{\Upsilon_c}\left(\sqrt{p \gamma_\kappa L_- \lambda_-
  \frac{\lambda_-}{\lambda_+}} \delta_\Sigma\right)$ over $\Upsilon_1$
with cardinality at most 
\[
\left(C_{\Upsilon}+\ln \left(\frac{1}{\sqrt{p \gamma_\kappa L_- \lambda_-
  \frac{\lambda_-}{\lambda_+}} \delta_\Sigma} \right)
\right)^{D_{\Upsilon_1}}.
\]
Finally, if the means are known ($\star=0$), we  do not need to construct a grid.
In the end, $\left|G_{\Upsilon_\star}\left(\sqrt{p \gamma_\kappa L_- \lambda_-
  \frac{\lambda_-}{\lambda_+}} \delta_\Sigma\right)\right|\leq \left(C_{\Upsilon}+\ln \left(\frac{1}{\sqrt{p \gamma_\kappa L_- \lambda_-
  \frac{\lambda_-}{\lambda_+}} \delta_\Sigma} \right) \right)^{Z_{\upsilon,\star}}$, with $Z_{\upsilon,K}=D_{\Upsilon_K}$, $Z_{\upsilon,c}=D_{\Upsilon_1}$ and $Z_{\upsilon,0}=0$.
  
Then, we consider the grid $G_{L}$ over $[L_-,L_+]$:
\begin{align*}
G_L\left(\frac{2}{25}\delta_\Sigma \right)&=\left\{L_- \left(1+\frac{2}{25}\delta_\Sigma\right)^g/g\in \mathbb{N},L_- \left(1+\frac{2}{25}\delta_\Sigma\right)^g\leq L_+\right\}\\
\left|G_L\left(\frac{2}{25}\delta_\Sigma \right) \right|&\leq
1+\frac{\ln \left(\frac{L_+}{L_-} \right)}{\ln\left(1+\frac{2}{25}\delta_\Sigma \right)}
\end{align*}
Since $\delta_\Sigma \leq \frac{2}{5}$, $\ln\left(1+\frac{2}{25}\delta_\Sigma \right)\geq \frac{10}{129}\delta_\Sigma$.
\begin{align*}
\left|G_L\left(\frac{2}{25}\delta_\Sigma \right) \right|&\leq
1+\frac{129\ln \left(\frac{L_+}{L_-} \right)}{10\delta_\Sigma}
\leq \frac{4+129\ln \left(\frac{L_+}{L_-} \right)}{10\delta_\Sigma}
\end{align*}

By definition of a net, for any $D \in SO(p)$ there is a $\tilde{D}\in G_D\left(\frac{1}{10} \frac{\lambda_-}{\lambda_+}
\delta_\Sigma\right)$ such that $ \forall y \in \mathbb{R}^p,\|Dy-\tilde{D}y\| \leq
\frac{1}{10} \frac{\lambda_-}{\lambda_+}
\delta_\Sigma \|y\|$.
There exists a universal constant $c_U$ such that $\left|G_D\left(\frac{1}{10} \frac{\lambda_-}{\lambda_+}\delta_\Sigma\right) \right| \leq
c_U \left(\frac{10\lambda_+}{\lambda_- \delta_\Sigma}\right)^{\frac{p(p-1)}{2}}$.

For the grid $G_A$, we look at the condition on the $p-1$ first diagonal values and obtain:
\begin{align*}
\left|G_A\left(\frac{1}{10} \frac{\lambda_-}{\lambda_+}\delta_\Sigma\right) \right|\leq 
\left(2+\frac{\ln\left(\frac{\lambda_+}{\lambda_-}\right)}{\ln\left(1+\frac{1}{10} \frac{\lambda_-}{\lambda_+}\delta_\Sigma\right)} \right)^{p-1}
\end{align*}
Since $\delta_\Sigma \leq \frac{2}{5}$, $\ln\left(1+\frac{1}{10} \frac{\lambda_-}{\lambda_+}\delta_\Sigma\right)\geq \frac{5}{52}\frac{\lambda_-}{\lambda_+}\delta_\Sigma$, then
\begin{align*}
\left|G_A\left(\frac{1}{10} \frac{\lambda_-}{\lambda_+}\delta_\Sigma\right) \right|\leq 
\left(2+\frac{52}{5\delta_\Sigma}\frac{\lambda_+}{\lambda_-}\ln\left(\frac{\lambda_+}{\lambda_-}\right) \right)^{p-1}
\leq \left(4+52\frac{\lambda_+}{\lambda_-}\ln\left(\frac{\lambda_+}{\lambda_-}\right) \right)^{p-1}
\left(\frac{1}{5\delta_\Sigma}\right)^{p-1}
\end{align*}

Let $Z_{L,0}=Z_{D,0}=Z_{A,0}=0$, $Z_{L,c}=Z_{D,c}=Z_{A,c}=1$, $Z_{L,K}=Z_{D,K}=Z_{A,K}=K$.
We define $f_{\upsilon,\star}$ from $\Upsilon_\star$ to $\Upsilon_K$ by
$\begin{cases}
0\mapsto (\upsilon_{0,1},\ldots,\upsilon_{0,1}) \mbox{ if }\star=0\\
\upsilon \mapsto (\upsilon,\ldots, \upsilon) \mbox{ if }\star=c\\
(\upsilon_1,\ldots,\upsilon_K) \mapsto (\upsilon_1,\ldots,\upsilon_K) \mbox{ if } \star=K
\end{cases}$
and similarly $f_{L,\star}$, $f_{D,\star}$ and $f_{A,\star}$, respectively from $\left(\mathbb{R}_+ \right)^{Z_{L,\star}}$ into $\left(\mathbb{R}_+ \right)^K$, from $\left(SO(p)\right)^{Z_{D,\star}}$ into $\left(SO(p)\right)^K$ and from $\mathcal{A}(\lambda_-,\lambda_+)^{Z_{A,\star}}$
into $\mathcal{A}(\lambda_-,\lambda_+)^K$.

We define 
\[
\Gamma:(\upsilon_1,\ldots,\upsilon_K,L_1,\ldots,L_K,D_1,\ldots,D_K,A_1,\ldots,A_K)
\mapsto (\upsilon_k,L_kD_kA_kD_k')_{1\leq k \leq K}
\]
 and $\Psi:(\upsilon_k,\Sigma_k)_{1\leq k \leq K}
 \mapsto (\Phi_{\upsilon_k,\Sigma_k})_{1\leq k \leq K}$.
 The image of $\Upsilon_\star \times [L_-,L_+]^{Z_{L,\star}} \times SO(p)^{Z_{D,\star}} \times \mathcal{A}(\lambda_-,\lambda_+)^{Z_{A,\star}}$ by $\Psi \circ \Gamma \circ (f_{\upsilon,\star} \otimes f_{L,\star}\otimes f_{D,\star}\otimes f_{A,\star})$ is the set $\mathcal{G}$ of all K-tuples of Gaussian densities of type $[\upsilon_\star,L_\star,D_\star,A_\star]$.

Now, we define $B$:
\[
(\upsilon_k,\Sigma_k)_{1\leq k \leq K} \mapsto 
\left((1+\kappa\delta_\Sigma)^{-p} \Phi_{\upsilon_k,(1+\delta_\Sigma)^{-1}\Sigma_k},
(1+\kappa\delta_\Sigma)^p \Phi_{\upsilon_k,(1+\delta_\Sigma)\Sigma_k}
\right)_{1\leq k \leq K}.
\]
 The image of $G_{\Upsilon_\star} \times G_L^{Z_{L,\star}} \times G_D^{Z_{D,\star}} \times G_A^{Z_{A,\star}}$ by $B\circ \Gamma \circ (f_{\upsilon,\star} \otimes f_{L,\star}\otimes f_{D,\star}\otimes f_{A,\star})$ is a $\delta/5$-bracket covering of $\mathcal{G}$, with cardinality bounded by
\begin{align*}
&\left(\frac{\sqrt{\lambda_+} \exp\left(C_{\Upsilon} \right)}{\sqrt{p \gamma_\kappa L_- \lambda_-^2}\delta_\Sigma}\right)^{Z_{\Upsilon,\star}}\times \left(
\frac{4+129\ln \left(\frac{L_+}{L_-} \right)}{10\delta_\Sigma} \right)^{Z_{L,\star}}
 \times
c_U^{Z_{D,\star}} \left(\frac{10\lambda_+}{\lambda_-\delta_\Sigma}
\right)^{\frac{p(p-1)}{2}Z_{D,\star}}\\ 
&\times
 \left(4+52\frac{\lambda_+}{\lambda_-}\ln\left(\frac{\lambda_+}{\lambda_-}\right) \right)^{(p-1) Z_{A,\star}}
\left(\frac{1}{5\delta_\Sigma}\right)^{(p-1) Z_{A,\star}}
\end{align*}
 Taking $\delta_\Sigma=\frac{1}{5\sqrt{\kappa^2 \cosh \left(\frac{2\kappa}{5}\right)+\frac{1}{2}}} \frac{\delta}{p}$, we obtain
\begin{align*}
H_{[.],\sup_x \max_k d_y}\left(\frac{\delta}{5},\mathcal{G}\right)
\leq \mathcal{D} \left(\mathcal{C}+\ln \left(\frac{1}{\delta}\right) \right)
\end{align*}
with $\displaystyle\mathcal{D}=Z_{\upsilon,\star}+Z_{L,\star}+\frac{p(p-1)}{2}Z_{D,\star}+(p-1)Z_{A,\star}$ and 
\begin{align*}
\mathcal{C}&=\ln\left(5p\sqrt{\kappa^2 \cosh \left(\frac{2\kappa}{5}\right)+\frac{1}{2}}\right)+\frac{Z_{\upsilon,\star}C_{\Upsilon}}{\mathcal{D}} 
+\frac{Z_{\upsilon,\star}}{2\mathcal{D}} \ln\left(\frac{\lambda_+}{p\gamma_\kappa L_- \lambda_-^2}\right)\\
&
+\frac{Z_{L,\star}}{\mathcal{D}} \ln\left(\frac{4+129\ln\left(\frac{L_+}{L_-}\right)}{10}\right)
+\frac{Z_{D,\star}}{\mathcal{D}} \left( \ln(c_U)+\frac{p(p-1)}{2} \ln\left(\frac{10\lambda_+}{\lambda_-} \right)\right)\\
&+\frac{Z_{A,\star}(p-1)}{\mathcal{D}}
\ln\left(\frac{4}{5}+\frac{52\lambda_+}{5\lambda_-}\ln\left(\frac{\lambda_+}{\lambda_-}\right)\right)
\end{align*}

\subsubsection{With polynomial means}

Using previous work, we only have to handle $\Upsilon_K$'s bracketing entropy.
Just like for $W_K$, we aim at bounding the bracketing entropy by the entropy of the parameters' space.

We focus on the example where $\Upsilon_K=\Upsilon^K$ and
\[
\Upsilon=\left\lbrace
\upsilon:[0;1]^d \rightarrow \mathbb{R}^p \Big| \forall j \in \{1,\ldots,p\}, \forall x, \upsilon_j(x)=\sum_{|r|=0}^{\degUpsilon} \alpha_r^{(j)} x^r \mbox{ and } \|\alpha\|_\infty\leq \maxaUpsilon
\right\rbrace \]
We consider for any $\upsilon$, $\nu$ in $\Upsilon$ and any $x$ in $[0;1]^d$,
\begin{align*}
\|\upsilon(x)-\nu(x)\|_2^2 &=\sum_{j=1}^p \left(\sum_{|r|=0}^{\degUpsilon} \left(\alpha_r^{(j)}-\beta_r^{(j)} \right) x^r \right)^2\\
&\leq \sum_{j=1}^p \left(\sum_{|r|=0}^{\degUpsilon} \left(\alpha_r^{(j)}-\beta_r^{(j)} \right)^2 \right) \left(\sum_{|r|=0}^{\degUpsilon} x^{2r} \right)\\
&\leq \binom{\degUpsilon +d}{d} \sum_{j=1}^p \sum_{|r|=0}^{\degUpsilon} \left(\alpha_r^{(j)}-\beta_r^{(j)} \right)^2 \\
&\leq p  \binom{\degUpsilon +d}{d}^2 \max_{j,r} \left(\alpha_r^{(j)}-\beta_r^{(j)} \right)^2 
\end{align*}
So,
\begin{align*}
H_{[.],\max_k \sup_x \|\|_2}\left(\delta,\Upsilon_K\right)
&\leq H_{\max_{k,j,r}|.|}\left(\frac{\delta}{\sqrt{p}\binom{\degUpsilon +d}{d}}, \left\lbrace \left(\alpha_r^{(j,k)} \right)_{\overset{1\leq j \leq p}{\underset{1\leq k \leq K}{|r|\leq \degUpsilon}}}
 \Big| \|\alpha\|_\infty \leq \maxaUpsilon \right\rbrace \right)\\
&\leq pK\binom{\degUpsilon +d}{d} \ln \left(1+\frac{\sqrt{p}\binom{\degUpsilon +d}{d}\maxaUpsilon}{\delta} \right)\\
&\leq pK\binom{\degUpsilon +d}{d} \left[\ln \left(\sqrt{2}+\sqrt{p}\binom{\degUpsilon +d}{d}\maxaUpsilon\right)+\ln \left(\frac{1}{\delta} \right)\right]\\
&\leq D_{\Upsilon_K} \left( C_\Upsilon+\ln \left(\frac{1}{\delta}\right)\right)
\end{align*}
with $ D_{\Upsilon_K}=pK\binom{\degUpsilon+d}{d}$ and $C_\Upsilon=\ln \left(\sqrt{2}+\sqrt{p}\binom{\degUpsilon +d}{d}\maxaUpsilon\right)$.

\ifbool{Preprint}{

\subsection{Proof of the key proposition to handle bracketing entropy
  of Gaussian families}

\subsubsection{Proof of  Proposition \ref{gauss}}
\begin{proof}
$[t^-,t^+]$ is a $\delta$/5 bracket.\\
Since $(1+\delta_\Sigma)\tilde{\Sigma}^{-1}-(1+\delta_\Sigma)^{-1}\tilde{\Sigma}^{-1}=((1+\delta_\Sigma)-(1+\delta_\Sigma)^{-1})\tilde{\Sigma}^{-1}$ is a positive-definite matrix, Maugis and Michel's lemma can be applied.
\begin{lem} (\cite{Maugis}) Let $\Phi_{\upsilon_1,\Sigma_1}$ and $\Phi_{\upsilon_2,\Sigma_2}$ be two Gaussian densities with full rank covariance matrix in dimension p such that $\Sigma_1^{-1}-\Sigma_2^{-1}$ is a positive definite matrix. For any $y \in \mathbb{R}^p$,
\[
\frac{\Phi_{\upsilon_1,\Sigma_1}(y)}{\Phi_{\upsilon_2,\Sigma_2}(y)} \leq
\sqrt{\frac{|\Sigma_2|}{|\Sigma_1|}}
\exp \left(\frac{1}{2} (\upsilon_1-\upsilon_2)' (\Sigma_2-\Sigma_1)^{-1} (\upsilon_1-\upsilon_2)  \right).
\]
\end{lem}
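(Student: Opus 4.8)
The plan is to prove the bound by a direct computation that reduces it to maximising a concave quadratic form in $y$.

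First I would write both densities explicitly via $\Phi_{\upsilon,\Sigma}(y)=\frac{1}{\sqrt{(2\pi)^p|\Sigma|}}\exp\bigl(-\tfrac12(y-\upsilon)'\Sigma^{-1}(y-\upsilon)\bigr)$, so that
\[
\frac{\Phi_{\upsilon_1,\Sigma_1}(y)}{\Phi_{\upsilon_2,\Sigma_2}(y)}
=\sqrt{\frac{|\Sigma_2|}{|\Sigma_1|}}\,\exp\bigl(Q(y)\bigr),
\quad
Q(y)=\tfrac12(y-\upsilon_2)'\Sigma_2^{-1}(y-\upsilon_2)-\tfrac12(y-\upsilon_1)'\Sigma_1^{-1}(y-\upsilon_1).
\]
The prefactor already coincides with the one claimed, so it remains only to show $\sup_y Q(y)\le\tfrac12(\upsilon_1-\upsilon_2)'(\Sigma_2-\Sigma_1)^{-1}(\upsilon_1-\upsilon_2)$ and then conclude by monotonicity of $\exp$. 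Since $Q(y+\upsilon_2)$ depends on the means only through $\upsilon_1-\upsilon_2$, a translation of the variable lets me assume $\upsilon_2=0$ and set $\mu=\upsilon_1$, which does not affect the supremum.

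Next, expanding gives $Q(y)=-\tfrac12\bigl(y'My-2b'y+c\bigr)$ with $M=\Sigma_1^{-1}-\Sigma_2^{-1}$, $b=\Sigma_1^{-1}\mu$ and $c=\mu'\Sigma_1^{-1}\mu$. The hypothesis is exactly that $M$ is positive definite, so $Q$ is a strictly concave quadratic, maximised at $y^\star=M^{-1}b$; completing the square yields $\sup_y Q(y)=\tfrac12\bigl(b'M^{-1}b-c\bigr)$.

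The only delicate point is to identify this maximum with the target $\tfrac12\mu'(\Sigma_2-\Sigma_1)^{-1}\mu$. I would use the factorisation $M=\Sigma_1^{-1}-\Sigma_2^{-1}=\Sigma_1^{-1}(\Sigma_2-\Sigma_1)\Sigma_2^{-1}$, hence $M^{-1}=\Sigma_2(\Sigma_2-\Sigma_1)^{-1}\Sigma_1$, and verify the matrix identity $\Sigma_1^{-1}M^{-1}\Sigma_1^{-1}-\Sigma_1^{-1}=(\Sigma_2-\Sigma_1)^{-1}$ by reducing it to $\Sigma_2(\Sigma_2-\Sigma_1)^{-1}-I=\Sigma_1(\Sigma_2-\Sigma_1)^{-1}$. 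Substituting $b=\Sigma_1^{-1}\mu$ then gives $b'M^{-1}b-c=\mu'(\Sigma_2-\Sigma_1)^{-1}\mu$, which is precisely what is needed. I would also note that positive definiteness of $M$ forces $\Sigma_1\prec\Sigma_2$, so $(\Sigma_2-\Sigma_1)^{-1}$ is well defined and the exponent is genuinely nonnegative; this bookkeeping, rather than any real difficulty, is the main thing to get right.
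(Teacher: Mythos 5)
Your proposal is correct: the ratio computation, the reduction to a strictly concave quadratic $Q(y)=-\tfrac12(y'My-2b'y+c)$ with $M=\Sigma_1^{-1}-\Sigma_2^{-1}$, the completion of the square giving $\sup_y Q(y)=\tfrac12(b'M^{-1}b-c)$, and the identity $M^{-1}=\Sigma_2(\Sigma_2-\Sigma_1)^{-1}\Sigma_1$ leading to $b'M^{-1}b-c=\mu'(\Sigma_2-\Sigma_1)^{-1}\mu$ all check out, as does the observation that $M\succ0$ forces $\Sigma_1\prec\Sigma_2$ so that $(\Sigma_2-\Sigma_1)^{-1}$ exists and the exponent is nonnegative. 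Note that the paper itself gives no proof of this lemma---it is imported verbatim from the cited reference of Maugis and Michel---and your direct quadratic-maximization argument is essentially the standard proof found there, so there is nothing to reconcile.
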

Thus, $\forall x \in \mathcal{X}, \forall y \in \mathbb{R}^p,$
      \begin{align*}
 \frac{t^-(x,y)}{t^+(x,y)}=&
  \frac{(1+\kappa \delta_\Sigma)^{-p}}{(1+\kappa \delta_\Sigma)^{p}}
      \frac{\Phi_{\upsilon(x),(1+\delta_\Sigma)^{-1}
	  \tilde{\Sigma}}(y)}{\Phi_{\upsilon(x),(1+\delta_\Sigma) \tilde{\Sigma}}(y)} 
  \leq \frac{1}{(1+\kappa\delta_\Sigma)^{2 p}} \sqrt{%
    \frac{(1+\delta_\Sigma)^{p}}{(1+\delta_\Sigma)^{-p}}}\\
  =& \left(\frac{1+\delta_\Sigma}{(1+\kappa\delta_\Sigma)^2}\right)^{p}
  = \left(\frac{1+\delta_\Sigma}{1+ 2 \kappa \delta_\Sigma + \kappa^2
    \delta_\Sigma^2}\right)^{p}
  \leq 1
    \end{align*}
  For all $x$ in $\mathcal{X}$,
    \begin{align*}
      d_y^2(t^-,t^+) & = \int t^-(x,y)\mathrm{d} y  + \int t^+(x,y) \, \mathrm{d} y
    - 2 \int \sqrt{t^-(x,y)} \sqrt{t^+(x,y)}  \mathrm{d} y\\
	& = (1+\kappa\delta_\Sigma)^{-p}+(1+\kappa\delta_\Sigma)^{p}- 2
      (1+\kappa\delta_\Sigma)^{-p/2} (1+\kappa\delta_\Sigma)^{p/2}\\
&\qquad \qquad \times \int \sqrt{\Phi_{\upsilon(x),(1+\delta_\Sigma)^{-1} \tilde{\Sigma}}(y)}
      \sqrt{\Phi_{\upsilon(x),(1+\delta_\Sigma) \tilde{\Sigma}}(y)} \, \mathrm{d} y\\
      & = (1+\kappa\delta_\Sigma)^{-p}+(1+\kappa\delta_\Sigma)^{p}-
      \left(2 \right.\\
&\qquad \qquad \qquad \left. -d_y^2\left(\Phi_{\upsilon(x),(1+\delta_\Sigma)^{-1}
	    \tilde{\Sigma}}(y),\Phi_{\upsilon(x),(1+\delta_\Sigma)
	    \tilde{\Sigma}}(y)\right)\right).
    \end{align*}
Using the following lemma,
\begin{lem}\label{lemma15}
Let $\Phi_{\upsilon_1,\Sigma_1}$ and $\Phi_{\upsilon_2,\Sigma_2}$ be two Gaussian densities with full rank covariance matrix in dimension p, then
\begin{align*}
d^2\left(\Phi_{\upsilon_1,\Sigma_1},\Phi_{\upsilon_2,\Sigma_2}\right)&=
2\left(1-2^{p/2} |\Sigma_1 \Sigma_2|^{-1/4} |\Sigma_1^{-1}+\Sigma_2^{-1}|^{-1/2} \right.\\
&\left.\times\exp \left(-\frac{1}{4} (\upsilon_1-\upsilon_2)' (\Sigma_1+\Sigma_2)^{-1} (\upsilon_1-\upsilon_2) \right) \right).
\end{align*}
\end{lem}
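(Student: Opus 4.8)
The plan is to reduce the squared Hellinger distance to the Hellinger affinity and evaluate the latter in closed form by a single Gaussian integral. Since both $\Phi_{\upsilon_1,\Sigma_1}$ and $\Phi_{\upsilon_2,\Sigma_2}$ are probability densities, expanding the square gives
\[
d^2\left(\Phi_{\upsilon_1,\Sigma_1},\Phi_{\upsilon_2,\Sigma_2}\right)
= 2 - 2\int \sqrt{\Phi_{\upsilon_1,\Sigma_1}(y)\,\Phi_{\upsilon_2,\Sigma_2}(y)}\,\mathrm{d}y,
\]
so the factor $2(1-\cdots)$ in the statement is already accounted for, and everything reduces to computing the affinity integral and showing it equals the bracketed product of determinant and exponential terms.

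First I would write out the product of the two densities and combine the two quadratic exponents into a single quadratic form in $y$. Introducing $M := \Sigma_1^{-1}+\Sigma_2^{-1}$ and $b := \Sigma_1^{-1}\upsilon_1+\Sigma_2^{-1}\upsilon_2$, the exponent of $\sqrt{\Phi_1\Phi_2}$ is $-\tfrac14\bigl[(y-M^{-1}b)'M(y-M^{-1}b) - b'M^{-1}b + c\bigr]$ with $c := \upsilon_1'\Sigma_1^{-1}\upsilon_1 + \upsilon_2'\Sigma_2^{-1}\upsilon_2$, obtained by completing the square. The $y$-dependent part is a Gaussian integral: $\int \exp\!\bigl(-\tfrac14 z'Mz\bigr)\,\mathrm{d}z = (2\pi)^{p/2}\,2^{p/2}\,|M|^{-1/2}$. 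Combining this with the normalization prefactor $(2\pi)^{-p/2}|\Sigma_1|^{-1/4}|\Sigma_2|^{-1/4}$ produces exactly $2^{p/2}\,|\Sigma_1\Sigma_2|^{-1/4}\,|\Sigma_1^{-1}+\Sigma_2^{-1}|^{-1/2}$, matching the determinant factor in the claim.

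The remaining constant contributes the exponential factor $\exp\!\bigl(\tfrac14(b'M^{-1}b - c)\bigr)$, and the main obstacle is the purely algebraic identity
\[
\upsilon_1'\Sigma_1^{-1}\upsilon_1 + \upsilon_2'\Sigma_2^{-1}\upsilon_2 - b'M^{-1}b
= (\upsilon_1-\upsilon_2)'(\Sigma_1+\Sigma_2)^{-1}(\upsilon_1-\upsilon_2).
\]
I would prove it using the factorization $M = \Sigma_1^{-1}(\Sigma_1+\Sigma_2)\Sigma_2^{-1}$, which yields $M^{-1} = \Sigma_2(\Sigma_1+\Sigma_2)^{-1}\Sigma_1 = \Sigma_1(\Sigma_1+\Sigma_2)^{-1}\Sigma_2$, and then expanding $b'M^{-1}b$ term by term against $c$; the cross and diagonal terms recombine, after substituting $(\Sigma_1+\Sigma_2)^{-1}=\Sigma_1^{-1}-\Sigma_1^{-1}(\Sigma_1^{-1}+\Sigma_2^{-1})^{-1}\Sigma_1^{-1}$ (a Woodbury/push-through step), into the single quadratic form in $\upsilon_1-\upsilon_2$. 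Substituting this identity back turns the constant into $-\tfrac14(\upsilon_1-\upsilon_2)'(\Sigma_1+\Sigma_2)^{-1}(\upsilon_1-\upsilon_2)$, which gives the stated exponential and completes the proof. This algebraic simplification is the only delicate part; the rest is the standard Gaussian-integral bookkeeping.
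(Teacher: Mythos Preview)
Your argument is correct: reducing to the Hellinger affinity, completing the square with $M=\Sigma_1^{-1}+\Sigma_2^{-1}$ and $b=\Sigma_1^{-1}\upsilon_1+\Sigma_2^{-1}\upsilon_2$, evaluating the Gaussian integral, and then establishing the identity $c-b'M^{-1}b=(\upsilon_1-\upsilon_2)'(\Sigma_1+\Sigma_2)^{-1}(\upsilon_1-\upsilon_2)$ is exactly the standard derivation, and each step you describe checks out. (For the last identity, your factorization $M^{-1}=\Sigma_1(\Sigma_1+\Sigma_2)^{-1}\Sigma_2$ already suffices: it gives $\Sigma_1^{-1}-\Sigma_1^{-1}M^{-1}\Sigma_1^{-1}=(\Sigma_1+\Sigma_2)^{-1}$ directly, so the Woodbury reference is not strictly needed, but it does no harm.)

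There is nothing to compare against: the paper states this lemma as a known closed-form formula and uses it immediately, but does not supply a proof of it anywhere (the appendix proves Lemmas~\ref{lemma16}, \ref{lemma17}, \ref{lemma18}, but not this one). Your write-up therefore fills a gap the paper leaves implicit.
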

 we obtain
\begin{align*}
 d_y^2(t^-,t^+) & =(1+\kappa\delta_\Sigma)^{-p}+(1+\kappa\delta_\Sigma)^{p}-2\, 2^{p/2} \left( (1+\delta_\Sigma) + (1+\delta_\Sigma)^{-1}
 \right)^{-p/2}\\
&=2 - 2\, 2^{p/2} \left( (1+\delta_\Sigma) + (1+\delta_\Sigma)^{-1}
 \right)^{-p/2}+ (1+\kappa\delta_\Sigma)^{-p}-2 \\
&\qquad \qquad +(1+\kappa\delta_\Sigma)^{p}
\end{align*}
Applying Lemma \ref{lemma16} 
\begin{lem}\label{lemma16}
For any $0<\delta \leq \sqrt{2}$ and any $p\geq 1$, let $\kappa \geq
\frac{1}{2}$ and \linebreak $\delta_\Sigma \leq
\frac{1}{5\sqrt{\kappa^2 \cosh(\frac{2\kappa}{5}) +
    \frac{1}{2}}}\frac{\delta}{p}$,
then \[
\delta_\Sigma \leq \frac{2}{5p} \leq \frac{2}{5}.\]
\end{lem}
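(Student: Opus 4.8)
The plan is to reduce the two claimed inequalities to entirely elementary estimates, treating them separately. The second inequality, $\frac{2}{5p}\le\frac{2}{5}$, is immediate from the hypothesis $p\ge 1$, so no work is needed there. All the content lies in establishing the first inequality $\delta_\Sigma\le\frac{2}{5p}$, and the strategy is simply to insert the two available upper bounds on the right-hand side of the hypothesis on $\delta_\Sigma$ and chase the constants.

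The key step is a crude lower bound on the square-root factor in the denominator. Since $\cosh\ge 0$ everywhere, we have $\kappa^2\cosh\!\left(\frac{2\kappa}{5}\right)\ge 0$, hence
\[
\kappa^2\cosh\!\left(\tfrac{2\kappa}{5}\right)+\tfrac{1}{2}\ \ge\ \tfrac{1}{2},
\qquad\text{so}\qquad
\sqrt{\kappa^2\cosh\!\left(\tfrac{2\kappa}{5}\right)+\tfrac{1}{2}}\ \ge\ \tfrac{1}{\sqrt{2}}.
\]
I would then combine this with the bound $\delta\le\sqrt{2}$ coming from the hypothesis. Substituting both into the assumed estimate on $\delta_\Sigma$ gives
\[
\delta_\Sigma
\ \le\ \frac{\delta}{5p\,\sqrt{\kappa^2\cosh\!\left(\frac{2\kappa}{5}\right)+\frac{1}{2}}}
\ \le\ \frac{\sqrt{2}}{5p}\cdot\sqrt{2}
\ =\ \frac{2}{5p},
\]
which is exactly the first claimed inequality. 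Chaining with $p\ge 1$ then yields $\delta_\Sigma\le\frac{2}{5p}\le\frac{2}{5}$ and closes the argument.

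There is essentially no genuine obstacle here: the statement is a book-keeping lemma whose only role is to guarantee that $\delta_\Sigma$ stays in the range $\left[0,\frac{2}{5}\right]$ required by the grid constructions (the conditions $\delta_\Sigma\le\frac{2}{5}$ invoked, for instance, in the estimates for $\lvert G_L\rvert$ and $\lvert G_A\rvert$). The one point worth flagging is that the hypotheses $\kappa\ge\frac{1}{2}$ and the precise $\cosh\!\left(\frac{2\kappa}{5}\right)$ term are not actually used to obtain this particular bound: only the additive constant $\frac{1}{2}$ inside the square root is needed, since it alone supplies the factor $\frac{1}{\sqrt 2}$ that cancels the $\sqrt 2$ from $\delta\le\sqrt2$. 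I would keep those hypotheses in the statement only for consistency with Proposition~\ref{gauss}, in whose proof this lemma is applied.
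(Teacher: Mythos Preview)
Your proof is correct and follows the same elementary approach as the paper: bound the square-root factor in the denominator from below and combine with $\delta\le\sqrt{2}$. The paper's version is marginally different in that it uses $\cosh\ge 1$ and the hypothesis $\kappa\ge\tfrac{1}{2}$ to get the intermediate estimate $\sqrt{\kappa^2\cosh(\tfrac{2\kappa}{5})+\tfrac{1}{2}}\ge\sqrt{\tfrac14+\tfrac12}=\tfrac{\sqrt{3}}{2}$, leading to $\delta_\Sigma\le\tfrac{2\sqrt{2}}{5\sqrt{3}p}\le\tfrac{2}{5p}$; your route, using only $\kappa^2\cosh(\cdot)\ge 0$, is slightly more direct and, as you correctly observe, does not actually need $\kappa\ge\tfrac{1}{2}$.
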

and
\begin{lem}\label{lemma17}
For any $p \in \mathbb{N}^*$, for any $\delta_\Sigma >0$,
\[
2-2^{p/2+1} \left( (1+\delta_\Sigma)+(1+\delta_\Sigma)^{-1} \right)^{-p/2}
\leq \frac{p{\delta_\Sigma}^2}{2} \leq \frac{p^2{\delta_\Sigma}^2}{2}
\]

Furthermore, if $p\delta_\Sigma \leq c$, then
\[
(1+\kappa \delta_\Sigma)^p+(1+\kappa \delta_\Sigma)^{-p}-2
\leq \kappa^2 \cosh(\kappa c) p^2 {\delta_\Sigma}^2.
\]
\end{lem}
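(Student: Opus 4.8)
The plan is to treat the two inequalities separately, reducing each to an elementary scalar estimate. For the first one, I would begin by simplifying the inner quantity: a direct computation gives $(1+\delta_\Sigma)+(1+\delta_\Sigma)^{-1} = 2 + \frac{\delta_\Sigma^2}{1+\delta_\Sigma}$, so that
\[
2^{p/2}\bigl((1+\delta_\Sigma)+(1+\delta_\Sigma)^{-1}\bigr)^{-p/2}
= \Bigl(1 + \tfrac{\delta_\Sigma^2}{2(1+\delta_\Sigma)}\Bigr)^{-p/2}.
\]
Setting $u = \frac{\delta_\Sigma^2}{2(1+\delta_\Sigma)} \geq 0$, the left-hand side of the claimed inequality becomes $2\bigl(1-(1+u)^{-p/2}\bigr)$. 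Since $u \mapsto (1+u)^{-p/2}$ is convex on $(-1,+\infty)$, it lies above its tangent at $0$, which yields $(1+u)^{-p/2} \geq 1 - \frac{p}{2}u$ and hence $2\bigl(1-(1+u)^{-p/2}\bigr) \leq pu = \frac{p\delta_\Sigma^2}{2(1+\delta_\Sigma)}$. Bounding $1+\delta_\Sigma \geq 1$ gives the announced $\frac{p\delta_\Sigma^2}{2}$, and $p \leq p^2$ (valid since $p\in\mathbb{N}^*$) gives the final trivial step.

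For the second inequality, I would recognize the left-hand side as a hyperbolic cosine. Writing $t = p\ln(1+\kappa\delta_\Sigma)$, one has $(1+\kappa\delta_\Sigma)^{p} + (1+\kappa\delta_\Sigma)^{-p} - 2 = 2(\cosh t - 1)$. The key scalar lemma is then $\cosh t - 1 \leq \frac{t^2}{2}\cosh t$, which I would establish by comparing the power series of both sides term by term: the coefficient comparison of $t^{2m}$ reduces to $2(2m-2)! \leq (2m)!$, i.e. $2 \leq (2m)(2m-1)$ for $m\geq 1$, which holds with all series terms nonnegative. It then remains to control $t$: from $\ln(1+x)\leq x$ we get $0 \leq t \leq p\kappa\delta_\Sigma$, so $t^2 \leq \kappa^2 p^2 \delta_\Sigma^2$, while the hypothesis $p\delta_\Sigma \leq c$ gives $t \leq \kappa c$ and hence, by monotonicity of $\cosh$ on $[0,+\infty)$, $\cosh t \leq \cosh(\kappa c)$. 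Chaining these bounds produces $2(\cosh t - 1) \leq t^2 \cosh t \leq \kappa^2 \cosh(\kappa c)\, p^2 \delta_\Sigma^2$, as desired.

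Neither step presents a genuine obstacle; the only mildly delicate point is the scalar inequality $\cosh t - 1 \leq \frac{t^2}{2}\cosh t$, whose proof via term-by-term comparison of the Taylor expansions must be carried out so that the factorial inequality holds uniformly in the summation index. Everything else is a matter of organizing elementary convexity and monotonicity estimates, together with the algebraic rewriting of $(1+\delta_\Sigma)+(1+\delta_\Sigma)^{-1}$ and the observation that the symmetric power sum equals $2\cosh\bigl(p\ln(1+\kappa\delta_\Sigma)\bigr)$.
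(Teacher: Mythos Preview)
Your proof is correct. Both inequalities are established cleanly, and the scalar lemma $\cosh t - 1 \leq \tfrac{t^2}{2}\cosh t$ is verified carefully via the termwise comparison $2\leq(2m)(2m-1)$.

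The route differs from the paper's in a couple of places worth noting. For the first inequality, the paper rewrites the bracket as $2\cosh\bigl(\ln(1+\delta_\Sigma)\bigr)$, sets $f(x)=1-\cosh(x)^{-p/2}$, and bounds $f$ on $[0,\infty)$ by computing $f''(x)\leq p/2$ and integrating twice; the final step uses $\ln(1+\delta_\Sigma)\leq\delta_\Sigma$. Your algebraic reduction to $2\bigl(1-(1+u)^{-p/2}\bigr)$ with $u=\tfrac{\delta_\Sigma^2}{2(1+\delta_\Sigma)}$ followed by the tangent inequality $(1+u)^{-p/2}\geq 1-\tfrac{p}{2}u$ is more direct and avoids introducing $\cosh$ altogether; it also yields the slightly sharper intermediate bound $\tfrac{p\delta_\Sigma^2}{2(1+\delta_\Sigma)}$. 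For the second inequality both arguments start from $2(\cosh t-1)$ with $t=p\ln(1+\kappa\delta_\Sigma)$; the paper bounds $g(x)=\cosh x-1$ on the interval $[0,\kappa c]$ via the second-derivative bound $g''\leq\cosh(\kappa c)$ and Taylor's theorem, whereas you use the global series inequality $\cosh t-1\leq\tfrac{t^2}{2}\cosh t$ and then localize $\cosh t$ by monotonicity. The two arguments are essentially equivalent in strength; yours has the minor advantage that the key inequality holds for all $t$, with the hypothesis $p\delta_\Sigma\leq c$ entering only at the very end.
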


with $c=\frac{2}{5}$, it comes out that:\[
\underset{x}\sup \, d_y^2(t^-(x,y),t^+(x,y))\leq \left( \frac{\delta}{5} \right)^2.\]

 Now, we show that for all $x$ in $\mathcal{X}$, for all $y$ in $\mathbb{R}^p$, $t^-(x,y) \leq \Phi_{\upsilon(x),\Sigma}(y) \leq t^+(x,y)$.
 We use therefore Lemma \ref{lemma18}, thanks to the hypothesis made on covariance matrices.
 
 \begin{lem}\label{lemma18}
Let $(L,A,D)\in [L_-,L_+] \times \mathcal{A}(\lambda_-,\lambda_+) \times SO(p)$ and $(\tilde{L},\tilde{A},\tilde{D})\in [L_-,L_+] \times \mathcal{A}(\lambda_-,\infty) \times SO(p)$, define $\Sigma=LDAD'$ and $\tilde{\Sigma}=\tilde{L}\tilde{D}\tilde{A}\tilde{D}'$. If

$\begin{cases}
(1+\delta_L)^{-1} \tilde{L} \leq L \leq \tilde{L}\\
\forall 1\leq i\leq p, |A_{i,i}^{-1}-{\tilde{A}_{i,i}}^{-1}|\leq \delta_A \lambda_-^{-1}\\
\forall y\in \mathbb{R}^p,\|Dy-\tilde{D}y\|\leq \delta_D \|y\|
\end{cases}$\\
then $(1+\delta_\Sigma)\tilde{\Sigma}^{-1}-\Sigma^{-1}$ and $\Sigma^{-1}-(1+\delta_\Sigma)^{-1} \tilde{\Sigma}^{-1}$ satisfy
\begin{align*}
\forall y\in \mathbb{R}^p,& y'\left((1+\delta_\Sigma)\tilde{\Sigma}^{-1}-\Sigma^{-1}\right)y \geq
\tilde{L}^{-1} \left( (\delta_\Sigma-\delta_L) \lambda_+^{-1}-(1+\delta_\Sigma)\lambda_-^{-1}(2\delta_D+\delta_A) \right) \|y\|^2\\
\forall y\in \mathbb{R}^p,&
y'\left(\Sigma^{-1}-(1+\delta_\Sigma)^{-1}\tilde{\Sigma}^{-1} \right)y \geq
\frac{\tilde{L}^{-1}}{1+\delta_\Sigma} \left(\delta_\Sigma \lambda_+^{-1}-\lambda_-^{-1}(2\delta_D+\delta_A) \right) \|y\|^2
\end{align*}
\end{lem}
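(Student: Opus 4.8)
The plan is to reduce both inequalities to quadratic forms read in the eigenbases of $\Sigma$ and $\tilde\Sigma$, and to compare them by modifying one geometric ingredient at a time (volume, then rotation, then eigenvalue profile). Since $D,\tilde D\in SO(p)$ we have $D^{-1}=D'$ and $\tilde D^{-1}=\tilde D'$, so that $\Sigma^{-1}=L^{-1}DA^{-1}D'$ and $\tilde\Sigma^{-1}=\tilde L^{-1}\tilde D\tilde A^{-1}\tilde D'$. Because $A\in\mathcal{A}(\lambda_-,\lambda_+)$, the diagonal matrix $A^{-1}$ satisfies $\lambda_+^{-1}I\preceq A^{-1}\preceq\lambda_-^{-1}I$, whereas $\tilde A\in\mathcal{A}(\lambda_-,\infty)$ only yields $\tilde A^{-1}\preceq\lambda_-^{-1}I$. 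This asymmetry is precisely what will make $\lambda_+^{-1}$ appear in front of the \emph{gain} terms and $\lambda_-^{-1}$ in front of the \emph{error} terms.

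For the first inequality I would telescope from $\Sigma^{-1}$ to $(1+\delta_\Sigma)\tilde\Sigma^{-1}$ through
\begin{align*}
M_1=\big((1+\delta_\Sigma)\tilde L^{-1}\big)DA^{-1}D',\qquad
M_2=\big((1+\delta_\Sigma)\tilde L^{-1}\big)\tilde DA^{-1}\tilde D',
\end{align*}
so that $(1+\delta_\Sigma)\tilde\Sigma^{-1}-\Sigma^{-1}=(M_1-\Sigma^{-1})+(M_2-M_1)+\big((1+\delta_\Sigma)\tilde\Sigma^{-1}-M_2\big)$, the three steps changing respectively the volume, the rotation and the eigenvalue profile. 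The volume step is decisive: its quadratic form equals $\big((1+\delta_\Sigma)\tilde L^{-1}-L^{-1}\big)(D'y)'A^{-1}(D'y)$, and the hypothesis $(1+\delta_L)^{-1}\tilde L\le L\le\tilde L$ gives $\tilde L^{-1}\le L^{-1}\le(1+\delta_L)\tilde L^{-1}$, so the scalar factor is at least $(\delta_\Sigma-\delta_L)\tilde L^{-1}$, nonnegative in the regime of interest; combined with $A^{-1}\succeq\lambda_+^{-1}I$ and $\|D'y\|=\|y\|$ this produces the leading term $(\delta_\Sigma-\delta_L)\lambda_+^{-1}\tilde L^{-1}\|y\|^2$. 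Attaching the factor $(1+\delta_\Sigma)$ to the volume $\tilde L^{-1}$ from the start is exactly what routes $\lambda_+^{-1}$ onto this gain term.

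The eigenvalue step is immediate, since $A^{-1}-\tilde A^{-1}$ is diagonal: the corresponding form is bounded in absolute value by $(1+\delta_\Sigma)\tilde L^{-1}\delta_A\lambda_-^{-1}\|y\|^2$ via $|A_{ii}^{-1}-\tilde A_{ii}^{-1}|\le\delta_A\lambda_-^{-1}$. The rotation step is the main obstacle. With $u=D'y$ and $v=\tilde D'y$, I would use the polarization identity $v'A^{-1}v-u'A^{-1}u=(v+u)'A^{-1}(v-u)$ together with Cauchy--Schwarz, $\|A^{-1}\|_{\mathrm{op}}\le\lambda_-^{-1}$, and $\|u+v\|\le2\|y\|$. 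The genuinely delicate point is transferring the hypothesis, which controls $\|Dy-\tilde Dy\|$, to a bound on $\|u-v\|=\|D'y-\tilde D'y\|$: writing $D'-\tilde D'=D'(\tilde D-D)\tilde D'$ and using orthogonal invariance of the norm yields $\|D'y-\tilde D'y\|=\|(\tilde D-D)\tilde D'y\|\le\delta_D\|\tilde D'y\|=\delta_D\|y\|$. Hence $|v'A^{-1}v-u'A^{-1}u|\le2\delta_D\lambda_-^{-1}\|y\|^2$, giving an error at most $(1+\delta_\Sigma)\tilde L^{-1}2\delta_D\lambda_-^{-1}\|y\|^2$. Summing the three contributions gives the first inequality.

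For the second inequality I would run the same telescoping from $\Sigma^{-1}$ down to $(1+\delta_\Sigma)^{-1}\tilde\Sigma^{-1}$, now attaching the factor $(1+\delta_\Sigma)^{-1}$ to the volume. The volume step then contributes $\big(L^{-1}-(1+\delta_\Sigma)^{-1}\tilde L^{-1}\big)(D'y)'A^{-1}(D'y)$; since $L^{-1}\ge\tilde L^{-1}$ the scalar factor is at least $\tilde L^{-1}\big(1-(1+\delta_\Sigma)^{-1}\big)=\tilde L^{-1}\delta_\Sigma/(1+\delta_\Sigma)$, which needs no $\delta_L$ correction and yields the gain $\tilde L^{-1}(1+\delta_\Sigma)^{-1}\delta_\Sigma\lambda_+^{-1}\|y\|^2$, while the rotation and eigenvalue steps are bounded exactly as before up to the common prefactor $(1+\delta_\Sigma)^{-1}\tilde L^{-1}$. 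I expect the only real work to lie in the rotation estimate and in the bookkeeping of which spectral bound, $\lambda_+^{-1}$ or $\lambda_-^{-1}$, attaches to each step.
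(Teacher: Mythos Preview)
Your proof is correct and follows essentially the same route as the paper: a three-term telescoping that isolates the volume, rotation, and eigenvalue changes, with the volume step producing the $\lambda_+^{-1}$ gain (via $A^{-1}\succeq\lambda_+^{-1}I$) and the other two steps producing $\lambda_-^{-1}$ errors. The only cosmetic differences are the telescoping order (the paper changes rotation, then eigenvalues, then volume, keeping $\tilde A$ in the rotation step where you keep $A$) and the rotation estimate (the paper factors coordinatewise as $|\tilde D_i'x|^2-|D_i'x|^2=(|\tilde D_i'x|-|D_i'x|)(|\tilde D_i'x|+|D_i'x|)$ and applies Cauchy--Schwarz on the resulting sums, implicitly using $\|(\tilde D-D)'\|_{\mathrm{op}}=\|\tilde D-D\|_{\mathrm{op}}$, whereas you use the polarization identity and the explicit factorization $D'-\tilde D'=D'(\tilde D-D)\tilde D'$); both yield the same bound $2\delta_D\lambda_-^{-1}\|y\|^2$.
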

Using
$\begin{cases}
\delta_L=\frac{2}{25}\delta_\Sigma\\
\delta_D=\delta_A=\frac{1}{10}\frac{\lambda_-}{\lambda_+}\delta_\Sigma
\end{cases}$\\
we get lower bounds of the same order:
\begin{align*}
\forall y\in \mathbb{R}^p,& y'\left((1+\delta_\Sigma)\tilde{\Sigma}^{-1}-\Sigma^{-1}\right)y \geq
\frac{\tilde{L}^{-1}}{2\lambda_+}\delta_\Sigma \|y\|^2\\
\forall y\in \mathbb{R}^p,&
y'\left(\Sigma^{-1}-(1+\delta_\Sigma)^{-1}\tilde{\Sigma}^{-1} \right)y \geq
\frac{\tilde{L}^{-1}}{1+\delta_\Sigma} \frac{7}{10\lambda_+}\delta_\Sigma \|y\|^2
\end{align*}
 
Let's compare $\Phi_{\upsilon,\Sigma}$ and $t^+$.
\begin{align*}
  &\frac{\Phi_{\upsilon(x),\Sigma}(y)}{(1+\kappa \delta_\Sigma)^{p}
    \Phi_{\tilde{\upsilon}(x),(1+\delta_\Sigma) \tilde{\Sigma}}(y)}\\
& \leq (1+\kappa \delta_\Sigma)^{-p} \left( \sqrt{\frac{|(1+\delta_\Sigma) \tilde{\Sigma}|}{|\Sigma|}}
\exp \left( 
 \frac{1}{2} (\upsilon(x)-\tilde{\upsilon}(x))' \left( (1+\delta_\Sigma) \tilde{\Sigma} - \Sigma \right)^{-1} (\upsilon(x)-\tilde{\upsilon}(x))
 \right)
 \right)\\
& \leq \frac{(1+\delta_\Sigma)^{p/2}}{(1+\kappa \delta_\Sigma)^{p}} \left( \sqrt{\frac{|
      \tilde{\Sigma}|}{|\Sigma|}} \exp \left( 
 \frac{1}{2} (\upsilon(x)-\tilde{\upsilon}(x))' \left( (1+\delta_\Sigma) \tilde{\Sigma} - \Sigma \right)^{-1} (\upsilon(x)-\tilde{\upsilon}(x))
 \right)
 \right).
\end{align*}
But, \begin{align*}
     \left( (1+\delta_\Sigma) \tilde{\Sigma} - \Sigma \right)^{-1}
&= \left( (1+\delta_\Sigma) \tilde{\Sigma} (\Sigma^{-1}-(1+\delta_\Sigma)^{-1} \tilde{\Sigma}^{-1}) \Sigma \right)^{-1}\\
&=(1+\delta_\Sigma)^{-1} \Sigma ^{-1} (\Sigma^{-1}-(1+\delta_\Sigma)^{-1} \tilde{\Sigma}^{-1})^{-1} \tilde{\Sigma}^{-1}
    \end{align*}
Thus by Lemma \ref{lemma18},
\begin{align*}
 &(\upsilon(x)-\tilde{\upsilon}(x))' \left( (1+\delta_\Sigma) \tilde{\Sigma} - \Sigma \right)^{-1} (\upsilon(x)-\tilde{\upsilon}(x))\\
&\leq (1+\delta_\Sigma)^{-1} L_-^{-1} \lambda_-^{-1}
(1+\delta_\Sigma) \tilde{L} \frac{10}{7}\lambda_+ \delta_\Sigma^{-1}
 \tilde{L}^{-1} \lambda_-^{-1} \|\upsilon(x)-\tilde{\upsilon}(x)\|^2\\
&\leq \frac{10}{7} L_-^{-1} \lambda_-^{-2} \lambda_+ \delta_\Sigma^{-1} \|\upsilon(x)-\tilde{\upsilon}(x)\|^2\\
&\leq \frac{10}{7} L_-^{-1} \lambda_-^{-2} \lambda_+ \delta_\Sigma^{-1}
p \gamma_\kappa L_- \lambda_-^2 \lambda_+^{-1} \delta_\Sigma^2\\
&\leq \frac{10}{7} p \gamma_\kappa \delta_\Sigma
\end{align*}
Since $\displaystyle \sqrt{\frac{|\tilde{\Sigma}|}{|\Sigma|}}=\left(\frac{\tilde{L}}{L}\right)^{\frac{p}{2}}\leq \left(1+\frac{2}{25}\delta_\Sigma\right)^{p/2}$,
\begin{align*}
\frac{\Phi_{\upsilon(x),\Sigma}(y)}{(1+\kappa \delta_\Sigma)^{p}
    \Phi_{\tilde{\upsilon}(x),(1+\delta_\Sigma) \tilde{\Sigma}}(y)}&
\leq \frac{(1+\delta_\Sigma)^{p/2}(1+\frac{2}{25}\delta_\Sigma)^{p/2}}{(1+\kappa \delta_\Sigma)^{p}} \exp\left(\frac{5 \gamma_\kappa}{7}p\delta_\Sigma\right).
\end{align*}
It suffices that \[
\frac{5 \gamma_\kappa}{7}\delta_\Sigma \leq
\ln\left(\frac{1+\kappa \delta_\Sigma}{\sqrt{1+\delta_\Sigma}\sqrt{1+\frac{2}{25}\delta_\Sigma}} \right)
\]
Now let 
\begin{align*}
f(\delta_\Sigma)&=\ln(1+\kappa \delta_\Sigma)-\frac{1}{2}\ln(1+\delta_\Sigma)-\frac{1}{2}\ln\left(1+\frac{2}{25}\delta_\Sigma\right)\\
f'(\delta_\Sigma)&=\frac{\kappa}{1+\kappa \delta_\Sigma}-\frac{1}{2(1+\delta_\Sigma)}-\frac{1}{25\left(1+\frac{2}{25}\delta_\Sigma\right)}=\frac{(27k-4)\delta_\Sigma+50k-27}{2(1+\kappa\delta_\Sigma)(1+\delta_\Sigma)(25+2\delta_\Sigma)}
\end{align*}
Since $\kappa>\frac{17}{29}$,
\begin{align*}
f'(\delta_\Sigma)>\frac{k-\frac{27}{50}}{(1+\kappa\delta_\Sigma)(1+\delta_\Sigma)\left(1+\frac{2}{25}\delta_\Sigma\right)}
\end{align*}
Finally, since $f(0)=0$ and $\delta_\Sigma\leq \frac{2}{5}$, one deduces
\begin{align*}
f(\delta_\Sigma)&> \frac{k-\frac{27}{50}}{(1+\kappa\delta_\Sigma)(1+\delta_\Sigma)\left(1+\frac{2}{25}\delta_\Sigma\right)} \delta_\Sigma\\
&\geq \frac{k-\frac{27}{50}}{\left(1+\frac{2}{5}\kappa\right)\left(1+\frac{2}{5}\right)\left(1+\frac{2}{25}\frac{2}{5}\right)} \delta_\Sigma =\frac{5}{7}\frac{125(k-\frac{27}{50})}{129\left(1+\frac{2}{5}\kappa\right)} \delta_\Sigma\\
&\geq \frac{5}{7}\gamma_\kappa \delta_\Sigma
\end{align*}
So $\Phi_{\upsilon,\Sigma}\leq t^+$.
$\frac{t^-}{\Phi_{\upsilon,\Sigma}}$ is handled the same way.
\begin{align*}
&\frac{(1+\kappa \delta_\Sigma)^{-p}\Phi_{\tilde{\upsilon}(x),(1+\delta_\Sigma)^{-1}\tilde{\Sigma}}(y)}{\Phi_{\upsilon(x),\Sigma}(y)}\\
&\leq (1+\kappa \delta_\Sigma)^{-p} \left(\sqrt{\frac{|\Sigma|}{|(1+\delta_\Sigma)^{-1}\tilde{\Sigma}|}}   \exp\left(\frac{1}{2}(\upsilon(x)-\tilde{\upsilon}(x))'\left(\Sigma-(1+\delta_\Sigma)^{-1}\tilde{\Sigma} \right)^{-1}(\upsilon(x)-\tilde{\upsilon}(x)) \right) \right)\\
&\leq \frac{(1+\delta_\Sigma)^{p/2}}{(1+\kappa \delta_\Sigma)^p}
\exp\left(\frac{1}{2}(\upsilon(x)-\tilde{\upsilon}(x))'\left(\Sigma-(1+\delta_\Sigma)^{-1}\tilde{\Sigma} \right)^{-1}
(\upsilon(x)-\tilde{\upsilon}(x))\right)\\
\end{align*}
Now
\begin{align*}
\left(\Sigma-(1+\delta_\Sigma)^{-1}\tilde{\Sigma} \right)^{-1}
&=\left(\Sigma \left((1+\delta_\Sigma)\tilde{\Sigma}^{-1}-\Sigma^{-1}\right)
(1+\delta_\Sigma)^{-1}\tilde{\Sigma}\right)^{-1}\\
&=(1+\delta_\Sigma)\tilde{\Sigma}^{-1} \left((1+\delta_\Sigma)\tilde{\Sigma}^{-1}-\Sigma^{-1}\right)^{-1}\Sigma^{-1}
\end{align*}
and
\begin{align*}
(\upsilon(x)-\tilde{\upsilon}(x))'\left(\Sigma-(1+\delta_\Sigma)^{-1}\tilde{\Sigma} \right)^{-1}(\upsilon(x)-\tilde{\upsilon}(x))
&\leq (1+\delta_\Sigma)\tilde{L}^{-1}\lambda_-^{-1}
2\tilde{L}\lambda_+ \delta_\Sigma^{-1}
L_-^{-1}\lambda_-^{-1}
p \gamma_\kappa L_- \lambda_-^2 \lambda_+^{-1} \delta_\Sigma^2\\
&\leq 2p\gamma_\kappa(1+\delta_\Sigma)\delta_\Sigma
\end{align*}
We only need to prove that
\begin{align*}
\gamma_\kappa(1+\delta_\Sigma)\delta_\Sigma\leq
\ln\left(\frac{1+\kappa \delta_\Sigma}{\sqrt{1+\delta_\Sigma}}\right)
\end{align*}
Let
\begin{align*}
g(\delta_\Sigma)&=\ln\left(\frac{1+\kappa \delta_\Sigma}{\sqrt{1+\delta_\Sigma}}\right)\\
g'(\delta_\Sigma)&=\frac{\kappa}{1+\kappa \delta_\Sigma}-\frac{1}{2(1+\delta_\Sigma)}
=\frac{\kappa \delta_\Sigma +2\kappa-1}{2(1+\delta_\Sigma)(1+\kappa \delta_\Sigma)}
\end{align*}
Provided that $\kappa \geq \frac{1}{2}$ and $\delta_\Sigma\leq \frac{2}{5}$,
\[
g'(\delta_\Sigma)> \frac{2\kappa-1}{2(1+\frac{2}{5})(1+\frac{2}{5}\kappa)}
.\]
Finally, since $g(0)=0$,
\[
g(\delta_\Sigma)> \frac{2\kappa-1}{2(1+\frac{2}{5})(1+\frac{2}{5}\kappa)}\delta_\Sigma
=\frac{5(2\kappa-1)}{14(1+\frac{2\kappa}{5})}\delta_\Sigma
\geq \frac{7}{5}\gamma_\kappa \delta_\Sigma
\geq \left(1+\delta_\Sigma\right)\gamma_\kappa \delta_\Sigma
.\]
One deduces $(1+\kappa \delta_\Sigma)^{-p}\Phi_{\tilde{\upsilon}(x),(1+\delta_\Sigma)^{-1}\tilde{\Sigma}}(y)\leq\Phi_{\upsilon(x),\Sigma}(y)$.
\end{proof}

\subsection{Proof of inequalities used for bracketing entropy's
  decomposition}

For sake of completeness, we prove here the inequalities of Lemma 11
and 12 of \cite{ConditionalDensity} used in the proof of Lemma~\ref{decomp lemma}. 
\begin{proof}[Proof of Lemma 11]

For all $x$ in $\mathcal{X}$,
\begin{align*}
&d_{y,k}^2(\pi^-(x) \psi^-(x,.), \pi^+(x) \psi^+(x,.))\\
=&\int \sum_{k=1}^K \left(\sqrt{\pi_k^+(x)} \left(\sqrt{\psi_k^+(x,y)}-\sqrt{\psi_k^-(x,y)} \right)\right.\\
&\qquad \qquad \qquad \qquad \qquad \qquad \qquad \left.+\sqrt{\psi_k^-(x,y)} \left(\sqrt{\pi_k^+(x)}-\sqrt{\pi_k^-(x)} \right) \right)^2 dy\\
=&\int \sum_{k=1}^K \pi_k^+(x) \left(\sqrt{\psi_k^+(x,y)}-\sqrt{\psi_k^-(x,y)} \right)^2 dy\\
+&\int \sum_{k=1}^K \psi_k^-(x,y) \left(\sqrt{\pi_k^+(x)}-\sqrt{\pi_k^-(x)} \right)^2 dy \\
+&2 \sum_{k=1}^K \sqrt{\pi_k^+(x)} \left(\sqrt{\pi_k^+(x)}-\sqrt{\pi_k^-(x)} \right) \int \sqrt{\psi_k^-(x,y)} \left(\sqrt{\psi_k^+(x,y)}-\sqrt{\psi_k^-(x,y)} \right) dy\\
\leq & \left(\sum_{k=1}^K \pi_k^+(x) \right) \max_k d_y^2(\psi_k^+(x,.),\psi_k^-(x,.)) +d_k^2(\pi^+(x),\pi^-(x)) \max_k \int \psi_k^-(x,y) dy\\
+& 2\sum_{k=1}^K \sqrt{\pi_k^+(x)} \left(\sqrt{\pi_k^+(x)}-\sqrt{\pi_k^-(x)} \right) d_y(\psi_k^+(x,.),\psi_k^-(x,.))\sqrt{\int \psi_k^-(x,y) dy}\\
\leq & \left(\sum_{k=1}^K \pi_k^+(x) \right) \max_k d_y^2(\psi_k^+(x,.),\psi_k^-(x,.)) +d_k^2(\pi^+(x),\pi^-(x)) \max_k \int \psi_k^-(x,y) dy\\
+& 2 \max_k \sqrt{\int \psi_k^-(x,y) dy} \max_k d_y(\psi_k^+(x,.),\psi_k^-(x,.)) \left(\sum_{k=1}^K \pi_k^+(x) \right)^{1/2} d_k(\pi^+(x),\pi^-(x))\\
\leq & \left( \max_k d_y(\psi_k^+(x,.),\psi_k^-(x,.)) \sqrt{\sum_{k=1}^K \pi_k^+(x)}\right. \\
&\qquad \qquad \qquad \qquad \qquad \qquad \qquad \left.+d_k(\pi^+(x),\pi^-(x)) \max_k \sqrt{\int \psi_k^-(x,y) dy} \right)^2
\end{align*}
\end{proof}

\begin{proof}[Proof of Lemma 12] 

For all $x$ in $\mathcal{X}$,
\begin{align*}
&d_y^2 \left(\sum_{k=1}^K \pi_k^-(x) \psi_k^-(x,.),\sum_{k=1}^K \pi_k^+(x) \psi_k^+(x,.)\right)
= \int \sum_{k=1}^K \pi_k^+(x) \psi_k^+(x,y) dy \\
&+\int \sum_{k=1}^K \pi_k^-(x) \psi_k^-(x,y) dy 
-2\int \sqrt{\sum_{k=1}^K \pi_k^+(x) \psi_k^+(x,y)} \sqrt{\sum_{k=1}^K \pi_k^-(x) \psi_k^-(x,y)} dy\\
 &\leq \int \sum_{k=1}^K \pi_k^+(x) \psi_k^+(x,y) dy +\int \sum_{k=1}^K \pi_k^-(x) \psi_k^-(x,y) dy\\
&\qquad \qquad \qquad \qquad \qquad \qquad -2\int \sum_{k=1}^K \sqrt{\pi_k^+(x) \psi_k^+(x,y)} \sqrt{\pi_k^-(x) \psi_k^-(x,y)} dy\\
&\leq d_{y,k}^2(\pi^-(x) \psi^-(x,.), \pi^+(x) \psi^+(x,.))
\end{align*}
\end{proof}

\subsection{Proof of lemmas used for Gaussian's bracketing entropy}

\subsubsection{Proof of Lemma~\ref{lemma16}}
% \begin{lem}(lemma \ref{lemma16})
% For any $0<\delta \leq \sqrt{2}$ and any $p\geq 1$, let $\kappa \geq \frac{1}{2}$ and $\delta_\Sigma \leq
% \frac{1}{5\sqrt{\kappa^2 \cosh(\frac{2\kappa}{5}) +
%     \frac{1}{2}}}\frac{\delta}{p}$,
% then \[
% \delta_\Sigma \leq \frac{2}{5p} \leq \frac{2}{5}.\]
% \end{lem}

\begin{proof}
\begin{align*}
\delta_\Sigma &\leq
\frac{1}{5\sqrt{\kappa^2 \cosh(\frac{2\kappa}{5}) +\frac{1}{2}}}\frac{\delta}{p}
\leq \frac{1}{5\sqrt{\kappa^2 +\frac{1}{2}}}\frac{\delta}{p}
\leq \frac{1}{5\sqrt{\left(\frac{1}{2}\right)^2 +\frac{1}{2}}}\frac{\delta}{p}
\leq \frac{2\sqrt{2}}{5\sqrt{3}p}
\leq \frac{2}{5p}
\end{align*}
\end{proof}

\subsubsection{Proof of Lemma~\ref{lemma17}}
% \begin{lem} (lemma \ref{lemma17})
% For any $p \in \mathbb{N}^*$, for any $\delta_\Sigma >0$,
% \[
% 2-2^{p/2+1} \left( (1+\delta_\Sigma)+(1+\delta_\Sigma)^{-1} \right)^{-p/2}
% \leq \frac{p{\delta_\Sigma}^2}{2} \leq \frac{p^2{\delta_\Sigma}^2}{2}
% \]

% Furthermore, if $p\delta_\Sigma \leq c$, then
% \[
% (1+\kappa \delta_\Sigma)^p+(1+\kappa \delta_\Sigma)^{-p}-2
% \leq \kappa^2 \cosh(\kappa c) p^2 {\delta_\Sigma}^2.
% \]
% \end{lem}

\begin{proof}
  \begin{align*}
    2-2\, 2^{d/2}\, \left( (1+\delta_\Sigma)+(1+\delta_\Sigma)^{-1}\right)^{-d/2} & =
    2\left( 1 - \left(
        \frac{e^{\ln(1+\delta_\Sigma)}+e^{-\ln(1+\delta_\Sigma)}}{2}\right)^{-d/2}
      \right)\\
& = 2 \left( 1 - \left( \cosh\left(\ln (1+\delta_\Sigma)\right)\right)^{-d/2}
  \right)\\
&   = 2 f\left(\ln(1+\delta_\Sigma)\right)\\
  \end{align*}
where $f(x)=1-\cosh(x)^{-d/2}$. Studying this function yields
\begin{align*}
  f'(x) & = \frac{d}{2} \sinh(x) \cosh(x)^{-d/2-1}\\
  f''(x) & = \frac{d}{2} \cosh(x)^{-d/2} - \frac{d}{2} \left(
    \frac{d}{2}+1 \right) \sinh(x)^2 \cosh(x)^{-d/2-2}\\
& = \frac{d}{2} \left( 1-\left( \frac{d}{2} +1 \right) \left(
    \frac{\sinh(x)}{\cosh(x)} \right)^2 \right) \cosh(x)^{-d/2}\\
\intertext{as $\cosh(x)\geq 1$, we have thus}
f''(x) & \leq \frac{d}{2}.
\intertext{%
Now since $f(0)=0$ and $f'(0)=0$, this implies for any $x\geq0$}
  f(x) & \leq \frac{d}{2} \frac{x^2}{2} \leq \frac{d^2}{2} \frac{x^2}{2}.
\end{align*}
We deduce thus that
\begin{align*}
    2-2\, 2^{d/2}\, \left( (1+\delta_\Sigma)+(1+\delta_\Sigma)^{-1}\right)^{-d/2}
   & \leq \frac{1}{2} d^2 \left(\ln(1+\delta_\Sigma)\right)^2 \\
\intertext {and using $\ln(1+\delta_\Sigma)\leq\delta_\Sigma$}
  2-2\, 2^{d/2}\, \left( (1+\delta_\Sigma)+(1+\delta_\Sigma)^{-1}\right)^{-d/2} & \leq \frac{1}{2} d^2 \delta_\Sigma^2.
\end{align*}

Now,
\begin{align*}
  \left(1+\kappa\delta_\Sigma\right)^d +  \left(1+\kappa\delta_\Sigma\right)^{-d} -2
& = 2\left( \cosh\left(d \ln(1+\kappa\delta_\Sigma)\right)-1 \right) 
= 2 g\left(d \ln(1+\kappa\delta_\Sigma)\right)
\end{align*}
with $g(x)= \cosh(x)-1$. Studying this function yields
\begin{align*}
  g'(x)&=\sinh(x)\quad\text{and}\quad
  g''(x)  = \cosh(x)
\intertext{and thus, since $g(0)=0$ and $g'(0)=0$, for any $0\leq x \leq c$}
g(x) & \leq \cosh(c) \frac{x^2}{2}.
\end{align*}
Since $\ln(1+\kappa\delta_\Sigma) \leq \kappa\delta_\Sigma$, $d\delta_\Sigma\leq c$ implies
$d\ln(1+\kappa\delta_\Sigma) \leq \kappa c$, we obtain thus
\begin{align*}
  \left(1+\kappa\delta_\Sigma\right)^d +  \left(1+\kappa\delta_\Sigma\right)^{-d} -2
&\leq \cosh(\kappa c) d^2   \left(\ln(1+\kappa\delta_\Sigma)\right)^2
 \leq \kappa^2 \cosh(\kappa c) d^2 \delta_\Sigma^2.
\end{align*}
\end{proof}

\subsubsection{Proof of Lemma\ref{lemma18}}
% \begin{lem}(lemma \ref{lemma18})
% Let $(L,A,D)\in [L_-,L_+] \times \mathcal{A}(\lambda_-,\lambda_+) \times SO(p)$ and $(\tilde{L},\tilde{A},\tilde{D})\in [L_-,L_+] \times \mathcal{A}(\lambda_-,\infty) \times SO(p)$, define $\Sigma=LDAD'$ and $\tilde{\Sigma}=\tilde{L}\tilde{D}\tilde{A}\tilde{D}'$. If

% $\begin{cases}
% (1+\delta_L)^{-1} \tilde{L} \leq L \leq \tilde{L}\\
% \forall 1\leq i\leq p, |A_{i,i}^{-1}-{\tilde{A}_{i,i}}^{-1}|\leq \delta_A \lambda_-^{-1}\\
% \forall y\in \mathbb{R}^p,\|Dy-\tilde{D}y\|\leq \delta_D \|y\|
% \end{cases}$\\
% then $(1+\delta_\Sigma)\tilde{\Sigma}^{-1}-\Sigma^{-1}$ and $\Sigma^{-1}-(1+\delta_\Sigma)^{-1} \tilde{\Sigma}^{-1}$ satisfy
% \begin{align*}
% \forall y\in \mathbb{R}^p,& y'\left((1+\delta_\Sigma)\tilde{\Sigma}^{-1}-\Sigma^{-1}\right)y \geq
% \tilde{L}^{-1} \left( (\delta_\Sigma-\delta_L) \lambda_+^{-1}-(1+\delta_\Sigma)\lambda_-^{-1}(2\delta_D+\delta_A) \right) \|y\|^2\\
% \forall y\in \mathbb{R}^p,&
% y'\left(\Sigma^{-1}-(1+\delta_\Sigma)^{-1}\tilde{\Sigma}^{-1} \right)y \geq
% \frac{\tilde{L}^{-1}}{1+\delta_\Sigma} \left(\delta_\Sigma \lambda_+^{-1}-\lambda_-^{-1}(2\delta_D+\delta_A) \right) \|y\|^2
% \end{align*}
% \end{lem}

\begin{proof}
By definition,
\begin{align*}
  x' \left( (1+\delta_\Sigma)\tilde{\Sigma}^{-1}-\Sigma^{-1} \right) x & =
  (1+\delta_\Sigma) \tilde{L}^{-1} \sum_{i=1}^{p} \tilde{A}_{i,i}^{-1}
  |\tilde{D}_{i}'x|^2
- L^{-1} \sum_{i=1}^{p} A_{i,i}^{-1}
  |D_{i}'x|^2\\
& = (1+\delta_\Sigma) \tilde{L}^{-1} \sum_{i=1}^{p} \tilde{A}_{i,i}^{-1}
  |\tilde{D}_{i}'x|^2
- (1+\delta_\Sigma) \tilde{L}^{-1} \sum_{i=1}^{p} \tilde{A}_{i,i}^{-1}
  |D_{i}'x|^2
 \\
& \quad + (1+\delta_\Sigma) \tilde{L}^{-1} \sum_{i=1}^{p} \tilde{A}_{i,i}^{-1}
  |D_{i}'x|^2 - (1+\delta_\Sigma) \tilde{L}^{-1} \sum_{i=1}^{p} A_{i,i}^{-1}
  |D_{i}'x|^2\\
& \quad + (1+\delta_\Sigma) \tilde{L}^{-1} \sum_{i=1}^{p} A_{i,i}^{-1}
  |D_{i}'x|^2 - L^{-1} \sum_{i=1}^{p} A_{i,i}^{-1}
  |D_{i}'x|^2
\end{align*}
Along the same lines,
\begin{align*}
  x' \left( \Sigma^{-1}-(1+\delta_\Sigma)^{-1}\tilde{\Sigma}^{-1} \right) x & =
L^{-1} \sum_{i=1}^{p} A_{i,i}^{-1}
  |D_{i}'x|^2 -   (1+\delta_\Sigma)^{-1} \tilde{L}^{-1} \sum_{i=1}^{p} \tilde{A}_{i,i}^{-1}
  |\tilde{D}_{i}'x|^2\\
& = L^{-1} \sum_{i=1}^{p} A_{i,i}^{-1}
  |D_{i}'x|^2 -   (1+\delta_\Sigma)^{-1} \tilde{L}^{-1} \sum_{i=1}^{p} A_{i,i}^{-1}
  |D_{i}'x|^2 \\
& \quad + (1+\delta_\Sigma)^{-1} \tilde{L}^{-1} \sum_{i=1}^{p} A_{i,i}^{-1}
  |D_{i}'x|^2  - (1+\delta_\Sigma)^{-1} \tilde{L}^{-1} \sum_{i=1}^{p} \tilde{A}_{i,i}^{-1}
  |D_{i}'x|^2 \\
& \quad + (1+\delta_\Sigma)^{-1} \tilde{L}^{-1} \sum_{i=1}^{p} \tilde{A}_{i,i}^{-1}
  |D_{i}'x|^2 - (1+\delta_\Sigma)^{-1} \tilde{L}^{-1} \sum_{i=1}^{p} \tilde{A}_{i,i}^{-1}
  |D_{i}'x|^2
\end{align*}

Now
\begin{align*}
 \left|  \sum_{i=1}^{p} \tilde{A}_{i,i}^{-1}
  |\tilde{D}_{i}'x|^2
-  \sum_{i=1}^{p} \tilde{A}_{i,i}^{-1}
  |D_{i}'x|^2 \right| & \leq  \sum_{i=1}^{p} \tilde{A}_{i,i}^{-1}
 \left| |\tilde{D}_{i}'x|^2 - |D_{i}'x|^2 \right|\\
& \leq  \lambda_-^{-1} \sum_{i=1}^{p}
\left| |\tilde{D}_{i}'x|^2 - |D_{i}'x|^2 \right|   \\
& \leq  \lambda_-^{-1} \sum_{i=1}^{p}
\left| |\tilde{D}_{i}'x| - |D_{i}'x| \right| \left| |\tilde{D}_{i}'x| + |D_{i}'x|
\right|\\
& \leq   \lambda_-^{-1} \left( \sum_{i=1}^{p}
\left| (\tilde{D}_{i}-D_{i})'x\right|^2 \right)^{1/2} 
\left( \sum_{i=1}^{p}
\left| (\tilde{D}_{i}+D_{i})'x\right|^2 \right)^{1/2} \\
& \leq \lambda_-^{-1} \delta_D \|x\| 2 \|x\| = \lambda_-^{-1}
2 \delta_D \|x\|^2.
\end{align*}
Furthermore,
\begin{align*}
\left|  \sum_{i=1}^{p} \tilde{A}_{i,i}^{-1}
  |D_{i}'x|^2 - \sum_{i=1}^{p} A_{i,i}^{-1}
  |D_{i}'x|^2 \right| & \leq \sum_{i=1}^{p} \left|
\tilde{A}_{i,i}^{-1} - A_{i,i}^{-1} \right|
  |D_{i}'x|^2 \\
& \leq \delta_A \lambda_-^{-1}\sum_{i=1}^{p} |D_{i}'x|^2 = 
\delta_A \lambda_-^{-1} \|x\|^2.
\end{align*}

We notice then that
\begin{align*}
(1+\delta_\Sigma) \tilde{L}^{-1} \sum_{i=1}^{p} A_{i,i}^{-1}
  |D_{i}'x|^2 - L^{-1} \sum_{i=1}^{p} A_{i,i}^{-1}
  |D_{i}'x|^2 & = \left( (1+\delta_\Sigma) \tilde{L}^{-1} -L^{-1} \right) \sum_{i=1}^{p} A_{i,i}^{-1}
  |D_{i}'x|^2\\
& \geq (\delta_\Sigma-\delta_L) \tilde{L}^{-1} \lambda_+^{-1} \|x\|^2
\end{align*}
while
\begin{align*}
  L^{-1} \sum_{i=1}^{p} A_{i,i}^{-1}
  |D_{i}'x|^2 -   (1+\delta_\Sigma)^{-1} \tilde{L}^{-1} \sum_{i=1}^{p} A_{i,i}^{-1}
  |D_{i}'x|^2 & = \left( L^{-1} - (1+\delta_\Sigma)^{-1} \tilde{L}^{-1} \right)
  \sum_{i=1}^{p} A_{i,i}^{-1}
  |D_{i}'x|^2\\
& \geq \left( 1-(1+\delta_\Sigma)^{-1} \right) \tilde{L}^{-1}
\lambda_+^{-1} \|x\|^2\\
& \geq \frac{\delta_\Sigma}{1+\delta_\Sigma} \lambda_+^{-1} \tilde{L}^{-1} \|x\|^2
\end{align*}

We deduce thus that
\begin{align*}
  x' \left( (1+\delta_\Sigma)\tilde{\Sigma}^{-1}-\Sigma^{-1} \right) x & \geq
  (\delta_\Sigma-\delta_L) \tilde{L}^{-1} \lambda_+^{-1} \|x\|^2 - (1+\delta_\Sigma)
\tilde{L}^{-1} \lambda_-^{-1}
  \left( 
2 \delta_D  + 2 \delta_A \right) \|x\|^2\\
& \geq  \tilde{L}^{-1} \left( (\delta_\Sigma-\delta_L) \lambda_+^{-1} -
  (1+\delta_\Sigma)\lambda_-^{-1} \left( 
2 \delta_D  + \delta_A \right) \right) \|x\|^2 
\intertext{and}
  x' \left( \Sigma^{-1}-(1+\delta_\Sigma)^{-1}\tilde{\Sigma}^{-1} \right) x & \geq
\frac{\delta_\Sigma}{1+\delta_\Sigma} \tilde{L}^{-1}\lambda_+^{-1} \|x\|^2
- (1+\delta_\Sigma)^{-1} \tilde{L}^{-1} \lambda_-^{-1} \left( 
2 \delta_D  + \delta_A \right) \|x\|^2\\
& \geq
\frac{\tilde{L}^{-1}}{1+\delta_\Sigma} \left(
  \delta_\Sigma \lambda_+^{-1} - \lambda_-^{-1} \left( 
2 \delta_D  + \delta_A \right) \right) \|x\|^2
\end{align*}
\end{proof}
}{}

\bibliographystyle{plainnat}
\bibliography{BiblioGaussian}

\end{document}